\documentclass[a4paper, 12pt]{amsart}
\setlength{\textwidth}{\paperwidth}
\addtolength{\textwidth}{-2in}
\calclayout

%-------------------- PACKAGES --------------------%

\usepackage{amsmath,amsthm,amssymb,mathrsfs,mathtools} % using amsart, so don't need to load amsmath or amsthm
\usepackage[utf8]{inputenc}
\usepackage[
backend=biber,
style=alphabetic,
sorting=nyt,
backref=true,
url=false,
isbn=false,
doi=false]{biblatex}
\addbibresource{../references.bib}
\usepackage{bm}
\usepackage{enumitem}
\usepackage{tensor}
\usepackage{tikz-cd}
\usepackage{hyperref}
\hypersetup{
  colorlinks=true,
  linkcolor={red!50!black},
  filecolor=magenta,
  urlcolor=cyan,
  citecolor={blue!80!black}
}
\urlstyle{same}

%-------------------- THEOREM ENVIRONMENTS --------------------%

%-------------------- plain style --------------------%
\theoremstyle{plain}

\newtheorem{theorem}     [equation]  {Theorem}
\newtheorem{proposition} [equation]  {Proposition}
\newtheorem{lemma}       [equation]  {Lemma}
\newtheorem{corollary}   [equation]  {Corollary}

\newtheorem*{theorem*}               {Theorem}
\newtheorem*{proposition*}           {Proposition}
\newtheorem*{lemma*}                 {Lemma}
\newtheorem*{corollary*}             {Corollary}

%-------------------- definition style --------------------%
\theoremstyle{definition}

\newtheorem{definition/} [equation]  {Definition}
\newenvironment{definition}{ \pushQED{\qed}\begin{definition/}} {\popQED\end{definition/}}

\newtheorem*{definition*/}           {Definition}
\newenvironment{definition*}{ \pushQED{\qed}\begin{definition*/}} {\popQED\end{definition*/}}

%-------------------- remark style --------------------%
\theoremstyle{remark}

\newtheorem{remark/}     [equation]  {Remark}
\newenvironment{remark}{ \pushQED{\qed}\begin{remark/}} {\popQED\end{remark/}}
\newtheorem{example/}    [equation]  {Example}
\newenvironment{example}{ \pushQED{\qed}\begin{example/}} {\popQED\end{example/}}

\newtheorem*{remark*/}               {Remark}
\newenvironment{remark*}{ \pushQED{\qed}\begin{remark*/}} {\popQED\end{remark*/}}
\newtheorem*{example*/}              {Example}
\newenvironment{example*}{ \pushQED{\qed}\begin{example*/}} {\popQED\end{example*/}}

\newtheorem*{question*}              {Question}

\numberwithin{equation}{section}

%-------------------- CONVENIENT ENVIRONMENTS --------------------%

\newenvironment{enumeratea}{
  \begin{enumerate}[label = (\alph*)]}{
  \end{enumerate}}

\makeatletter %%\def\l@subsection{\@tocline{2}{0pt}{1pc}{5pc}{}}
\def\l@subsection{\@tocline{2}{0pt}{3pc}{6pc}{}} \makeatother

\setlist[description]{labelindent=\parindent,font=\normalfont}

\makeatletter
\@namedef{subjclassname@2020}{\textup{2020} Mathematics Subject Classification}
\makeatother

% \makeatletter
% \def\namedlabel#1#2{\begingroup
%   #2%
%   \def\@currentlabel{#2}%
%   \phantomsection\label{#1}\endgroup
% }
% \makeatother
% -------------------- NEW COMMANDS --------------------%

\newcommand{\ol}[1]{\overline{#1}}

\newcommand{\cl}[1]{\mathcal{#1}}
\newcommand{\sr}[1]{\mathscr{#1}}

\newcommand{\mbf}[1]{\mathbf{#1}}
\newcommand{\msf}[1]{\mathsf{#1}}

\newcommand{\ti}[1]{\tilde{#1}}

\newcommand{\rra}{\rightrightarrows}

\newcommand{\fiber}[2]{\tensor[_{#1}]{\times}{_{#2}}}
\newcommand{\germ}[2]{[#1]_{{#2}}}

\newcommand{\define}[1]{\textbf{#1}}

% -------------------- NEW SYMBOLS --------------------%

\newcommand{\R}{\mathbb{R}}

\newcommand{\Z}{\mathbb{Z}}

\newcommand{\ext}{\text{ext}}

\DeclareMathOperator{\Aff}{Aff}
\DeclareMathOperator{\dom}{dom}
\DeclareMathOperator{\Diff}{Diff}
\DeclareMathOperator{\Hol}{Hol}
\DeclareMathOperator{\hol}{hol}
\DeclareMathOperator{\Eff}{Eff}

\DeclareMathOperator{\id}{id}
\DeclareMathOperator{\Iso}{Iso}

\DeclareMathOperator{\pr}{pr}

% -------------------- THE DOCUMENT --------------------%

\begin{document}

% -------------------- METADATA --------------------%

\title{Lie groupoids determined by their orbit spaces}
\author{David Miyamoto}
\address{David Miyamoto \\
  Max Planck Institute for Mathematics \\
  Vivatsgasse 7, 53111 Bonn, Germany}
\email{miyamoto@mpim-bonn.mpg.de}

\subjclass[2020]{58H05, 57R30, 57P05}
 \keywords{Lie groupoids, diffeology, quasifolds, submersions}

\keywords{}
\date{}

% -------------------- CONTENT --------------------%

 \begin{abstract}
   Given a Lie groupoid, we can form its orbit space, which carries a natural diffeology. More generally, we have a quotient functor from the Hilsum-Skandalis category of Lie groupoids to the category of diffeological spaces. We introduce the notion of a lift-complete Lie groupoid, and show that the quotient functor  restricts to an equivalence of the categories: of lift-complete Lie groupoids with isomorphism classes of surjective submersive bibundles as arrows, and of quasi-\'{e}tale diffeological spaces with surjective local subductions as arrows. In particular, the Morita equivalence class of a lift-complete Lie groupoid, alternatively a lift-complete differentiable stack, is determined by its diffeological orbit space. Examples of lift-complete Lie groupoids include quasifold groupoids and \'{e}tale holonomy groupoids of Riemannian foliations.
 \end{abstract}
 
\maketitle
 
\tableofcontents

\section{Introduction}
\label{sec:introduction}

Lie groupoids capture symmetries of various geometric situations. For instance, if $\{U_i\}$ is a cover of a manifold $M$, we form the covering groupoid $\bigsqcup_{i,j} U_i \cap U_j \rra \bigsqcup_i U_i$, which records how to assemble $M$ by gluing the $U_i$; if $G$ is a Lie group acting smoothly on $M$, we form the action groupoid $G \ltimes M \rra M$, which encodes the action; and if $\cl{F}$ is a regular foliation on $M$, we form the holonomy groupoid $\Hol(\cl{F}) \rra M$, which describes the transverse geometry. By collecting the points of $M$ attainable by the symmetries imposed by a Lie groupoid $G \rra M$, we get the set of orbits $M/G$.

This rarely inherits a manifold structure, and even its topology may be trivial. This happens for the Kronecker foliation of the 2-torus, given by lines of irrational slope. However, $M/G$ inherits a diffeology from $M$, which is more robust: the leaf space of the Kronecker foliation has non-trivial diffeology.

A diffeology is a generalized smooth structure on a set $X$, introduced by Souriau in the 1980s \cite{Sou79}, though a similar structure was earlier used by Chen \cite{Che77}. Diffeological spaces, and smooth maps between them, form a category $\mbf{Diffeol}$. The category of manifolds embeds fully and faithfully into into $\mbf{Diffeol}$, and $\mbf{Diffeol}$ is closed under quotients.\footnote{It is a quasi-topos \cite{BH11}.} Therefore we can equip the orbit space $M/G$ with its quotient diffeology.

We could alternatively view the Lie groupoid itself as encoding the smooth structure of $M/G$. In analogy with the fact a manifold structure is an equivalence class of smooth atlases, we say Lie groupoids determine the same transverse structure when they are Morita equivalent. We express this in the bicategory $\mbf{Bi}$ of Lie groupoids, principal bibundles, and isomorphisms of bibundles, where Lie groupoids are Morita equivalent if and only if they are isomorphic in $\mbf{Bi}$. We form the Hilsum-Skandalis (1-)category $\mbf{HS}$ by identifying isomorphic principal bibundles and forgetting the 2-arrows.

A Morita equivalence class of Lie groupoids may be viewed as a differentiable stack, denoted $[M/G]$. More precisely, we have the 2-category $\mbf{St}$ of differentiable stacks, and an equivalence of bicategories $\mbf{Bi} \to \mbf{St}$. Morita equivalent Lie groupoids correspond to isomorphic stacks, and thus $[M/G]$ is another smooth model for $M/G$.

There is a quotient functor $\mbf{F}:\mbf{Bi} \to \mbf{Diffeol}$, taking $G$ to $M/G$. This is not an equivalence of categories. In \cite{KM22}, we showed with Karshon that $\mbf{F}$ does induce an equivalence when restricted to quasifolds, which are (as spaces or Lie groupoids) modelled by affine actions of countable groups on Cartesian spaces,\footnote{$\R^n$ for some $n \geq 0$.} and to local isomorphisms.
\begin{theorem*}[\cite{KM22}]
  The quotient functor $\mbf{F}$ restricts to a functor from the bicategory of effective quasifold groupoids, locally invertible bibundles, and isomorphisms of bibundles, to the category of diffeological quasifolds and local diffeomorphisms. This restriction is:
  \begin{itemize}
  \item essentially surjective on objects;
  \item full on 1-arrows;
  \item faithful up to isomorphism of 1-arrows.
  \end{itemize}
  Furthermore, the stack represented by an effective quasifold groupoid is determined by its diffeological orbit space, and $\mbf{F}$ gives an equivalence of categories if we descend to $\mbf{HS}$.
\end{theorem*}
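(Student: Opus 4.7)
My plan is to leverage the explicit local models shared by both sides: a quasifold chart is a quotient $\R^n/\Gamma$ by an affine action of a countable group $\Gamma$, and an effective quasifold groupoid is locally Morita equivalent to the action groupoid $\Gamma \ltimes \R^n$. The strategy is to prove each property at the chart level and then glue via an atlas argument, using the effectivity assumption to eliminate ambiguity in the gluing data.

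For essential surjectivity on objects, starting from a diffeological quasifold $X$, I would pick a quasifold atlas $\{\phi_i : \R^{n_i}/\Gamma_i \to X\}$ whose images cover $X$. Disjointly unioning the $\R^{n_i}$ produces a manifold $Y$, and the $\Gamma_i$-actions together with the chart-overlap identifications assemble into an effective étale Lie groupoid $G \rra Y$. The orbit projection $Y \to X$ then descends to a diffeomorphism $Y/G \to X$, so $\mbf{F}(G) \cong X$.

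For fullness on 1-arrows, I would take a local diffeomorphism $f : M/G \to N/H$ and refine the atlases so that $f$ sends each source chart $\R^n/\Gamma$ into a target chart $\R^m/\Lambda$. A smooth lift $\widetilde{f} : \R^n \to \R^m$ exists locally; the critical lemma is that $\widetilde{f}$, being a lift between countable-group affine quotients, must itself be affine and equivariant with respect to some homomorphism $\Gamma \to \Lambda$. This produces an equivariant chart of a locally invertible bibundle, and effectivity lets the charts glue on overlaps. For faithfulness up to isomorphism, two bibundles $P, P'$ inducing the same $f$ yield over each chart equivariant lifts differing by a target-isotropy element; by effectivity and the rigidity of affine lifts these choices glue into a bibundle isomorphism $P \cong P'$. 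Passing to $\mbf{HS}$ identifies isomorphic bibundles and forgets 2-arrows, so essential surjectivity, fullness, and faithfulness upgrade to an equivalence of 1-categories; the Morita-invariance of $[M/G]$ then yields the determined-by-orbits statement.

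The main obstacle I anticipate is the rigidity lemma invoked in fullness: showing that any smooth map of diffeological quasifolds lifts locally to an affine, group-homomorphism-equivariant map between Cartesian charts. This is the non-formal step that genuinely uses the quasifold hypothesis rather than general Lie groupoid formalism, and it is what powers both the existence of a bibundle realizing $f$ and the uniqueness (up to isomorphism) of such a bibundle. Once this lemma is in hand, the remainder of the argument is a controlled atlas-and-gluing exercise, in which effectivity plays the role of ensuring that all gluing cocycles are uniquely determined by their induced action on orbits.
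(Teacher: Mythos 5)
Your overall architecture---disjoint unions of Cartesian charts for essential surjectivity, a bibundle assembled from local lifts for fullness, and a rigidity statement powering both principality and faithfulness---is exactly the approach of \cite{KM22}, which this paper cites rather than reproves, and it is also the skeleton of the paper's generalization (Propositions \ref{prop:4}, \ref{prop:5}, \ref{prop:6}, specialized to quasifolds via Proposition \ref{prop:3} and Example \ref{ex:3}). The essential surjectivity step and the passage to $\mbf{HS}$ are fine as you describe them.

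However, the ``critical lemma'' you isolate is false as stated, and this is a genuine gap. A local lift $\widetilde f\colon \R^n \dashrightarrow \R^m$ of a local diffeomorphism $f\colon \R^n/\Gamma \to \R^m/\Lambda$ need \emph{not} be affine, nor equivariant for a homomorphism $\Gamma \to \Lambda$: take $\Gamma = \Lambda = \{e\}$, so that $f$ is an arbitrary local diffeomorphism of $\R^n$ and is its own lift. The correct rigidity statement (\cite[Lemma 2.14]{KM22}, restated in Example \ref{ex:3}) concerns orbit-preserving \emph{self}-maps: if $h\colon U \to \R^m$ is smooth, $U$ is connected, and $h(y) \in \Lambda\cdot y$ for all $y$, then $h = \lambda|_U$ for a single $\lambda \in \Lambda$. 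What the argument actually needs is, first, that every local lift of a local diffeomorphism is itself a local diffeomorphism (a consequence of the charts being quasi-\'{e}tale, cf.\ Lemma \ref{lem:1}), and second, that two pointed lifts $\varphi, \widetilde\varphi$ of $f$ through the same point differ by a germ of a $\Lambda$-element, obtained by applying rigidity to the orbit-preserving composite $\widetilde\varphi \circ \varphi^{-1}$. It is this comparison statement, not affinity of the lifts, that makes the set of germs $\germ{\varphi}{x}$ of local lifts into a principal bibundle realizing $f$ and that yields uniqueness up to isomorphism. Relatedly, the equivariance you invoke is only germwise and depends on the choice of lift---there is in general no single homomorphism $\Gamma \to \Lambda$---which is precisely why the morphism must be packaged as a bibundle rather than a strict equivariant map; with the corrected lemma, your chart-by-chart gluing should be replaced by the direct global construction of the bibundle of germs of local lifts, as in Proposition \ref{prop:5}.
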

By ``determined,'' we mean that effective quasifold groupoids represent isomorphic stacks if and only if their orbit spaces are diffeomorphic. This theorem applies \emph{mutatis mutandis} to orbifold groupoids and diffeological orbifolds. We extend this result to two pairs of categories.
\begin{description}
\item[(A)] The sub-bicategory $\mbf{LiftComp}^{\twoheadrightarrow}$ of $\mbf{Bi}$, consisting of lift-complete (assumed to be \'{e}tale and effective) Lie groupoids and surjective submersive bibundles between them, and the subcategory $\mbf{QUED}^{\twoheadrightarrow}$ of $\mbf{Diffeol}$, consisting of quasi-\'{e}tale diffeological spaces and surjective local subductions between them.
\item[(B)] The sub-bicategory $\mbf{QLiftComp}$ of $\mbf{LiftComp}$, consisting of lift-complete Lie groupoids whose orbit map is a Q-chart, and the sub-category $\mbf{QMan}$ of $\mbf{QUED}$ consisting of Q-manifolds.
\end{description}

\begin{theorem}
  \label{thm:1}
  The quotient functor $\mbf{F}$ restricts in two ways:
  \begin{description}
  \item[(A)] To $\mbf{F}_{(A)}:\mbf{LiftComp}^{\twoheadrightarrow} \to \mbf{QUED}^{\twoheadrightarrow}$. This restriction is:
  \begin{itemize}
  \item essentially surjective on objects;
  \item full on 1-arrows;
  \item faithful up to isomorphism of 1-arrows.
  \end{itemize}
  Furthermore, the stack represented by a lift-complete Lie groupoid is determined by its diffeological orbit space, and $\mbf{F}_{(A)}$ gives an equivalence of categories if we descend to $\mbf{HS}$.
  \item[(B)] To $\mbf{F}_{(B)}:\mbf{QLiftComp} \to \mbf{QMan}$, satisfying the same conclusions as $\mbf{F}_{(A)}$.
  \end{description}
\end{theorem}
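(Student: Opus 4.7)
The plan is to extend the strategy of the quasifold proof in \cite{KM22} to the more flexible setting of lift-complete Lie groupoids, verifying in turn the four categorical properties: essential surjectivity on objects, fullness on 1-arrows, faithfulness up to isomorphism, and the stack-level equivalence on $\mbf{HS}$. I would begin by establishing well-definedness of $\mbf{F}_{(A)}$: that the orbit space of a lift-complete Lie groupoid, equipped with the quotient diffeology, is quasi-\'{e}tale, and that a surjective submersive bibundle induces a surjective local subduction on orbit spaces. For essential surjectivity, given a quasi-\'{e}tale space $X$, I would use its defining Q-chart atlas to build local \'{e}tale groupoid charts, then glue them via the pseudogroup of transition maps to obtain a global lift-complete Lie groupoid whose orbit space is canonically diffeomorphic to $X$.

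The most delicate step, and the one I expect to be the main obstacle, is fullness on 1-arrows: given a surjective local subduction $\varphi : M/G \to N/H$, I must produce a surjective submersive bibundle $G \leftarrow P \to H$ whose induced orbit map is $\varphi$. Locally, the subduction hypothesis yields smooth lifts $\ti\varphi_\alpha : U_\alpha \subseteq M \to V_\alpha \subseteq N$, and the lift-completeness of $H$ should be precisely what ensures that any two such lifts on an overlap differ by a smooth $H$-valued transition section. I would then define $P$ by gluing the local bundles $U_\alpha \times_{V_\alpha, t} H$ along these transitions, and verify that the resulting $H$-action is principal, that the $G$-action on the base lifts to $P$ and commutes with the $H$-action, and that the two anchors form a submersive bibundle. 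Tracking \'{e}taleness and effectiveness carefully will be essential here: effectiveness identifies transition sections uniquely, and \'{e}taleness keeps the local dimensions and smooth structures aligned.

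Faithfulness up to isomorphism then follows a parallel template: if two bibundles $P, P'$ induce the same $\varphi$, local uniqueness of lifts up to $H$ produces local bibundle isomorphisms, and the same lift-complete mechanism assembles them into a global isomorphism. Combining essential surjectivity, fullness, and faithfulness yields the equivalence of $\mbf{LiftComp}^{\twoheadrightarrow}$ with $\mbf{QUED}^{\twoheadrightarrow}$ after descent to $\mbf{HS}$, and hence the determination of a lift-complete stack by its diffeological orbit space. For part (B), I would verify that each construction above preserves the Q-chart condition on both sides: the chart groupoid assembled in essential surjectivity is a Q-groupoid when the input consists of Q-charts, and the bibundle built in the fullness step respects Q-manifold structure. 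Thus $\mbf{F}_{(B)}$ inherits the four properties from $\mbf{F}_{(A)}$ by restriction, completing the proof.
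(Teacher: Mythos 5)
Your overall architecture matches the paper's: well-definedness via the orbit map being a local subduction, essential surjectivity by assembling the charts of a quasi-\'{e}tale space into a germ groupoid of the pseudogroup of transitions commuting with the projections, fullness by building a bibundle out of local lifts, faithfulness by local uniqueness of lifts, and the stack statement as a corollary of lifting diffeomorphisms to Morita equivalences. Your \v{C}ech-style gluing of local bundles $U_\alpha \times_{V_\alpha,t} H$ is only cosmetically different from the paper's choice to take $P$ to be the space of \emph{germs} of all local lifts of $f$; the two constructions produce isomorphic bibundles.

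However, there is one substantive point you are waving at rather than supplying, and it is exactly the step you identify as the main obstacle. You write that ``the lift-completeness of $H$ should be precisely what ensures that any two such lifts on an overlap differ by a smooth $H$-valued transition section.'' Condition (LC) alone does not give this: it only says that an orbit-preserving locally defined self-map of $N$ lies in $\Psi(H)$, whereas here you must first \emph{produce} a transition $T$ of $N$ with $T\varphi = \ti{\varphi}$ from two lifts $\varphi,\ti{\varphi}:U \to N$ out of $M$. The paper's mechanism (Lemma \ref{lem:3}) is: (i) because $f$ is a local subduction and the $\pi$'s are quasi-\'{e}tale, every lift is a submersion (Lemma \ref{lem:1}); (ii) put $\varphi$ in normal form with a smooth \emph{connected} family of local sections $\sigma^v$ (Lemma \ref{lem:2}); (iii) use the totally-disconnected-orbits hypothesis to show $\ti{\varphi}\sigma^v(y')$ is independent of $v$, so $T := \ti{\varphi}\sigma^0$ is well defined and satisfies $T\varphi = \ti{\varphi}$; only then does (LC) place $T$ in $\Psi(H)$, and openness of the submersion $\varphi$ gives uniqueness of $\germ{T}{\varphi(x)}$ and smoothness of $x \mapsto \germ{T}{\varphi(x)}$ (needed for smoothness of the principality inverse map). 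This is also exactly where the restriction to surjective local subductions enters: for arbitrary smooth arrows the transition need not exist or be unique, which is why Examples \ref{ex:5} and \ref{ex:6} defeat the unrestricted statement. For part (B) the same comparison is instead extracted from the Q-chart condition, with no submersion hypothesis. Your plan would go through once this comparison lemma is proved, but as written it attributes to (LC) a consequence that requires the additional submersion/Q-chart input.
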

We leave the definitions for Sections \ref{sec:diff-lie-group} and \ref{sec:lift-complete-lie-1}, and here give some examples. Effective quasifold groupoids (Proposition \ref{prop:3}) and \'{e}tale holonomy groupoids of Riemannian foliations (Section \ref{sec:exampl-riem-foli}) are lift-complete, and diffeological quasifolds and leaf spaces of Riemannian foliations are quasi-\'{e}tale diffeological spaces. Both examples are lift-complete, respectively quasi-\'{e}tale, because the associated pseudogroups consist of solutions to partial differential equations (PDEs) of a certain type -- namely complete and quasi-analytic -- which we outline in Subsection \ref{sec:lift-compl-pseud}. The category $\mbf{QLiftComp}$ contains the \'{e}tale holonomy groupoids of Kronecker foliations of the 2-torus, and their leaf spaces are Q-manifolds. The connection to Riemannian foliations gives the following corollary.
\begin{corollary}
  \label{cor:2}
  Two Riemannian foliations have diffeomorphic leaf spaces if and only if their holonomy groupoids are Morita equivalent.
\end{corollary}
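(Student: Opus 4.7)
The plan is to derive Corollary \ref{cor:2} from Theorem \ref{thm:1}(A) by passing, for each Riemannian foliation, from the full holonomy groupoid to its \'{e}tale version. Given Riemannian foliations $(M_i, \cl{F}_i)$, $i = 1,2$, with diffeological leaf spaces $X_i$, I would first replace the full holonomy groupoid $\Hol(\cl{F}_i) \rra M_i$ by the \'{e}tale holonomy groupoid $\Hol^{T_i}(\cl{F}_i) \rra T_i$ obtained by restricting along a complete transversal $T_i \subset M_i$. That restriction is a Morita equivalence of Lie groupoids (a standard fact about complete transversals to regular foliations), so ``$\Hol(\cl{F}_1)$ and $\Hol(\cl{F}_2)$ are Morita equivalent'' if and only if the same holds for $\Hol^{T_1}(\cl{F}_1)$ and $\Hol^{T_2}(\cl{F}_2)$. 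Moreover, since $\mbf{F}$ descends to $\mbf{HS}$, Morita equivalent groupoids are sent to diffeomorphic diffeological spaces, so $\mbf{F}(\Hol^{T_i}(\cl{F}_i)) \cong X_i$ in $\mbf{Diffeol}$.

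Next, I would invoke the structural input provided by Section \ref{sec:exampl-riem-foli}: the \'{e}tale holonomy groupoid of a Riemannian foliation is lift-complete. Hence $\Hol^{T_i}(\cl{F}_i)$ is an object of $\mbf{LiftComp}^{\twoheadrightarrow}$, and Theorem \ref{thm:1}(A) --- after descent to $\mbf{HS}$ --- yields an equivalence between lift-complete Lie groupoids up to Morita equivalence and quasi-\'{e}tale diffeological spaces up to diffeomorphism. Since any diffeomorphism of diffeological spaces is automatically a surjective local subduction (its smooth inverse furnishes plot-lifts), a diffeomorphism $X_1 \cong X_2$ is an isomorphism in $\mbf{QUED}^{\twoheadrightarrow}$; under the equivalence of categories this corresponds to an isomorphism in $\mbf{HS}$ between $\Hol^{T_1}(\cl{F}_1)$ and $\Hol^{T_2}(\cl{F}_2)$, i.e.\ a Morita equivalence. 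The converse direction is immediate from the functoriality of $\mbf{F}$. Combined with the first step, this establishes both implications of the corollary.

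The work at the level of the corollary itself is therefore minimal --- a short formal chain, Morita $\Leftrightarrow$ \'{e}tale Morita $\Leftrightarrow$ orbit space diffeomorphism. The main obstacle sits upstream: one needs the lift-completeness of the \'{e}tale holonomy groupoid of a Riemannian foliation, which is addressed in Section \ref{sec:exampl-riem-foli} and presumably hinges on the transverse pseudogroup of a Riemannian foliation acting by local isometries, so that one can produce the required local lifts of holonomy. Once that input and Theorem \ref{thm:1}(A) are granted, no further technical effort is required.
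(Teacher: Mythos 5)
Your proposal is correct and follows essentially the same route as the paper: pass to the \'{e}tale holonomy groupoid over a complete transversal (a Morita equivalence), note it is lift-complete by Proposition \ref{prop:7} together with $\Psi(\Hol_{\cl{S}}(\cl{F})) = \Psi(\cl{S})$, and then apply Theorem \hyperref[thm:1]{\ref{thm:1} (A)}. The only cosmetic difference is that you spell out why a diffeomorphism of orbit spaces lives in $\mbf{QUED}^{\twoheadrightarrow}$, which the paper leaves implicit in the ``determined by its orbit space'' clause of Theorem \ref{thm:1}.
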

This implies that Riemannian foliations have diffeomorphic leaf spaces if and only if they are transverse equivalent. We elaborate in Section \ref{sec:exampl-riem-foli}.

It is not possible to extend Theorem \hyperref[thm:1]{\ref{thm:1} (A)} to include all arrows. Even if we restrict the objects to orbifolds, Examples \ref{ex:5} and \ref{ex:6} show that $\mbf{F}_{(A)}$ can fail to be faithful up to isomorphism, or full.

Watts introduced the quotient functor $\mbf{F}:\mbf{Bi} \to \mbf{Diffeol}$ in \cite{Wat22},\footnote{Relevant work was completed in 2013.} and he showed in \cite{Wat17} that $\mbf{F}$ is full on isomorphisms when we restrict to orbifold groupoids and diffeological orbifolds. It was previously known that $\mbf{F}$ is essentially surjective and full on isomorphisms for orbifolds as defined by Satake \cite{Sat56,Sat57}, for example see \cite[Chapter 5, Section 6]{MM03}. Iglesias-Zemmour, Karshon, and Zadka give a complete comparison of the various definitions of orbifold in \cite{IZKZ10}. Quasifolds were introduced outside diffeology by Prato \cite{Pra99,Pra01}, in order to extend the Delzant theorem from symplectic geometry to non-rational simple polytopes. They were introduced to diffeology in \cite{IZP20} and \cite{KM22}. Hilsum and Skandalis first defined bibundles in \cite{HS87} as generalized morphisms between foliations, and they also describe submersive bibundles. For the bicategory of Lie groupoids, and for differentiable stacks, our main source is \cite{Ler10}, but see also \cite{BX11,Blo08}. Surjective submersions between stacks also appear in \cite{HF19,BNZ20}. For Riemannian foliations, we use the foundational material in \cite{Mol88}, and we the notions of transverse equivalence from \cite{GZ19} and \cite{Miy23}.

This article is structured as follows. In Section \ref{sec:diff-lie-group}, we review the necessary tools from diffeology (Subsection \ref{sec:diffeology}) and Lie groupoids (Subsection \ref{sec:groupoids}). We also define quasi-\'{e}tale diffeological spaces, Q-manifolds, and surjective submersive bibundles. In Section \ref{sec:lift-complete-lie-1}, we define lift-complete Lie groupoids, and collect the lemmas needed to prove our main theorem in Section \ref{sec:an-equiv-categ}. We also characterize a class of lift-complete Lie groupoids in terms of solutions of certain PDEs. Section \ref{sec:exampl-riem-foli} contains an application to Riemannian foliations.

\subsubsection*{Acknowledgements}
%\label{sec:acknowledgements}

I would like to thank Yael Karshon for motivating the investigations that led to this result. Thanks also to Joel Villatoro and Alireza Ahmadi for helpful discussions on quasi-\'{e}tale diffeological spaces and diffeological \'{e}tale manifolds, respectively, and to Francesco Cattafi and Luca Accornero for sharing their knowledge of Lie pseudogroups. Finally, I am grateful to the Max Planck Institute for Mathematics for providing a welcoming and productive environment as I finished this paper.

\section{Diffeology and Lie groupoids}
\label{sec:diff-lie-group}

\subsection{Diffeology}
\label{sec:diffeology}

This will be a brief introduction to diffeology, and we refer to the textbook \cite{IZ13} for details. A \define{diffeological space} is a set $X$ equipped with a \define{diffeology}, which is a set of maps $\sr{D}$ from open subsets of Cartesian spaces into $X$, called \define{plots}, satisfying
\begin{itemize}
\item $\sr{D}$ contains all locally constant maps,
\item if $F:\msf{V} \to \msf{U}$ is smooth, and $p:\msf{U} \to X$ is a plot, then $p F$\footnote{We will often write composition as juxtaposition, e.g.\ $pF$ for $p \circ F$.} is a plot,
\item if $p:\msf{U} \to X$ is a map, and it is locally a plot, then $p$ is a plot.
\end{itemize}
If the need arises, we might denote a plot by $\msf{p}$ instead. Every set admits a diffeology. For instance, both
\begin{equation*}
  \{\text{all maps } \msf{U} \to X\} \text{ and } \{\text{locally constant maps } \msf{U} \to X\}
\end{equation*}
are diffeologies, called respectively \define{coarse} and \define{discrete}. If $M$ is a smooth manifold,\footnote{A Hausdorff and second-countable topological space equipped with a maximal smooth atlas.} then the set of all smooth maps $\msf{U} \to M$ is a canonical diffeology $\sr{D}_M$ on $M$.

A map $f:X \to Y$ between diffeological spaces is \define{smooth} if $f p :\msf{U} \to Y$ is a plot of $Y$ for every plot $p$ of $X$. \textbf{Diffeol} denotes the category of diffeological spaces and smooth maps between them. A map $f:M \to N$ between smooth manifolds is diffeologically smooth if and only if it is smooth in the usual sense, so the functor
\begin{equation*}
  \mathbf{Man} \to \mathbf{Diffeol}, \quad M \mapsto (M, \sr{D}_M), \quad f \mapsto f
\end{equation*}
is a fully faithful embedding.

The \define{D-topology} of a diffeological space is the finest topology in which all the plots are continuous. Smooth maps are continuous in the D-topology, and the D-topology of a manifold is its manifold topology. We say a smooth map $f:X \to Y$ is a \define{local diffeomorphism} if we can cover $X$ with D-open subsets $U$ such that $f(U)$ is D-open in $Y$, and $f:U \to f(U)$ is a smooth map (with respect to the subset diffeologies defined below) with smooth inverse.

Diffeology propagates to subsets. If $A \subseteq X$ is a subset of a diffeological space, its \define{subset diffeology} consists of all the plots of $X$ whose image is in $A$. When $A$ has discrete subset diffeology, we call $A$ \define{totally disconnected}. 

Diffeology also propagates to quotients. If ${\sim}$ is an equivalence relation on $X$, the \define{quotient diffeology} on $X/{\sim}$ consists of all maps $p:\msf{U} \to X/{\sim}$ that locally lift along the quotient $\pi:X \to X/{\sim}$ to plots of $X$. In a diagram,
\begin{equation*}
  \begin{tikzcd}
   & X \ar[d, "\pi"]\\
   \forall r\in \msf{U} \ar[ur, dashed,  "\exists q_r"] \ar[r, "p"] & X/{\sim},
\end{tikzcd}
\end{equation*}
where the dashed line indicates that $q_r$ is defined in a neighbourhood of $r$.

The quotient map $\pi:X \to X/{\sim}$ is an example of a subduction; a map $\pi:X \to Y$ is a \define{subduction} if it is surjective and every plot of $Y$ locally lifts along $\pi$ to a plot of $X$. In the sequel, we will have quotients satisfying a stronger property. First, to establish terminology, if we have a plot $p:\msf{U} \to X$, and a distinguished $r \in \msf{U}$, we will say $p$ is a plot \define{pointed} at $p(r)$. We will write $p:(\msf{U},r) \to (X, p(r))$. We will also use this language and notation for maps.
\begin{definition}
  A smooth map $\pi:X \to Y$ is a \define{local subduction} if, for every pointed plot $p:(\msf{U},r) \to (Y,p(r))$, and every $x \in X$ with $\pi(x) = p(r)$, there is a local lift of $p$ along $\pi$ pointed at $x$.
\end{definition}
  A smooth map $\pi:M \to N$ of manifolds is a local subduction if and only if it is a submersion. If $\pi$ is merely a subduction, then it may not be a submersion. A composition of local subductions is a local subduction, so we have the category $\mbf{Diffeol}^{\twoheadrightarrow}$ of diffeological spaces and surjective local subductions between them.

\begin{definition}
  \label{def:1}
  A smooth map $\pi:X \to Y$ is \define{quasi-\'{e}tale} if
  \begin{itemize}
  \item it is a local subduction,
  \item its fibers are totally disconnected, and
  \item[(QE)] if $f:U \to X$ is a smooth map from a D-open subset $U \subseteq X$, and $\pi f = \pi|_U$, then $f$ is a local diffeomorphism.
  \end{itemize}
  A diffeological space $Y$ is \define{quasi-\'{e}tale} if for every $y \in Y$, there is some quasi-\'{e}tale map $\pi:M \to Y$, with $M$ a manifold, such that $y \in \pi(M)$. 
\end{definition}

\begin{definition}
We denote by $\mbf{QUED}$ the sub-category of $\mbf{Diffeol}$ whose objects are second-countable\footnote{In Section \ref{sec:an-equiv-categ}, we will need to take a denumerable cover of a quasi-\'{e}tale space.} quasi-\'{e}tale diffeological spaces. When we only take surjective local subductions as arrows, we use $\mbf{QUED}^{\twoheadrightarrow}$.
\end{definition}
Villatoro \cite{Vill23} introduced quasi-\'{e}tale maps and spaces, and the notation $\mbf{QUED}$. Villatoro showed that the 2-category of groupoid objects of $\mbf{QUED}$ admits a Lie functor into the category of Lie algebroids, and thereby obtains an extension of Lie's third theorem to Lie algebroids. Quasifolds and leaf spaces of Riemannian foliations are quasi-\'{e}tale diffeological spaces, see Example \ref{ex:3} and Proposition \ref{prop:7}.

\begin{remark}
 Villatoro \cite[Example 3.12]{Vill23} points out that $\mbf{QUED}^{\twoheadrightarrow}$ is not closed under fiber products, and gives the example of $\pi = \ti{\pi}:\R \to \R/(x \sim -x)$, whose fiber product $\R \fiber{\pi}{\ti{\pi}} \R$ is not quasi-\'{e}tale. As a consequence, if we define groupoid objects in $\mbf{QUED}$ using local subductions for the source and target, the space of composable arrows need not be in $\mbf{QUED}$. For this reason, Villatoro avoids calling local subductions ``submersions,'' and we will follow this convention. Note that Villatoro defines submersions in $\mbf{QUED}$ as local subductions that are Cartesian in $\mbf{QUED}$.
\end{remark}

Within $\mbf{QUED}$ is the sub-category of Q-manifolds. For a smooth map $f$, let $\germ{f}{x}$ denote the germ of $f$ at $x$.
\begin{definition}
  \label{def:4}
 A quasi-\'{e}tale map $\pi:X \to Y$ is a \define{Q-chart} if
  \begin{enumerate}
    \item[(Q)] for any plots $p,\ti{p}:\msf{U} \to X$ such that $\pi p = \pi \ti{p}$, whenever $p(r) = \ti{p}(r)$, we also have $\germ{p}{r} = \germ{\ti{p}}{r}$.
    \end{enumerate}
  A diffeological space $Y$ is a \define{Q-manifold}, or a \define{diffeological \'{e}tale manifold}, if, for every $y \in Y$, there is some Q-chart $\pi:M \to Y$, with $M$ a manifold, such that $y \in \pi(M)$. We denote the subcategory of $\mbf{QUED}$ consisting of second-countable Q-manifolds by $\mbf{QMan}$.
\end{definition}
\begin{remark}
  \label{rem:8}
  We used plots in condition \hyperref[def:4]{(Q)}, following the philosophy that diffeology is built from plots. But because manifolds are locally Cartesian, it is equivalent to replace $p,\ti{p}:\msf{U} \to X$ with two maps from any manifold into $X$.
\end{remark}

Diffeological \'{e}tale manifolds were introduced by Ahmadi \cite{Ahm23}, and Q-manifolds were introduced by Barre in \cite{Bar73}. Barre did not work with diffeology, since Souriau only introduced diffeology in the 1980s. Instead, Barre defined a Q-manifold as a set $Y$ equipped with an equivalence class of surjective Q-charts, where two such charts $\pi$ and $\pi'$ are equivalent if $\pi \sqcup \pi':M \sqcup M' \to Y$ is also a Q-chart. One can show that Barre's category of Q-manifolds is equivalent to ours, by assigning to an equivalence class of surjective Q-charts the push-forward diffeology induced by any representative chart. We will use Barre's terminology in this article, because the word ``\'{e}tale'' already appears in ``quasi-\'{e}tale space.''

\begin{example}
  The diffeological quotient space $\R/(x \sim -x)$ is quasi-\'{e}tale but not a Q-manifold. For instance, $\id, -\id:\R \to \R$ are plots that descend to the same function $\R \to \R/(x \sim -x)$, and they agree at $0$, yet they have different germs at $0$. Therefore, orbifolds are not necessarily Q-manifolds.

  On the other hand, the \emph{irrational torus}, $T_\alpha := \R/(\Z + \alpha \Z)$ is a Q-manifold. To see this, suppose that $p,\ti{p}:\msf{U} \to \R$ descend to the same map, and $p(r) = \ti{p}(r)$. Let $\msf{V}$ be a relatively compact connected open neighbourhood of $r$ in $\msf{U}$. The sets
  \begin{equation*}
    \Delta_{m+\alpha n} := \{r' \in \ol{\msf{V}} \mid \ti{p}(r') = p(r') + m + \alpha n\}
  \end{equation*}
 partition $\ol{\msf{V}}$ into countably many disjoint closed sets. By Serpinski's theorem \cite[Theorem 6.1.27]{Eng89}, such a partition can have at most one component. This component must be $\Delta_0$, since it contains at least one point, $r$. Thus $\germ{p}{r} = \germ{\ti{p}}{r}$, as required. A similar argument shows that certain classes of foliations (e.g.\ Riemannian foliations whose leaves are without holonomy) have leaf spaces that are Q-manifolds. See \cite{Mei97} for examples.
\end{example}

\subsection{Lie Groupoids}
\label{sec:groupoids}

As with diffeology, we give a quick review of Lie groupoids, and refer to \cite{MM03,Ler10} for details. A \define{Lie groupoid} $G$ is a category $G \rra M$ such that
\begin{itemize}
\item (Lie) $G$ and $M$ are manifolds, except that $G$ need not be Hausdorff nor second-countable, and the source and target are submersions with Hausdorff fibers, and
\item (Groupoid) every arrow is invertible.
\end{itemize}
Unless otherwise stated, we will use $M$ for the base of $G$, and if $H$ is a Lie groupoid, its base will be $N$. We will write arrows as $g:x \mapsto y$, and composition like that of functions: if $g:x \mapsto y$ and $g':y \mapsto z$, then $g'g:x \mapsto z$. We denote $G_x := s^{-1}(x) \cap t^{-1}(x)$, and observe that this is a (possibly not second-countable) Lie group. Every Lie groupoid partitions its base into \define{orbits}, namely the sets $t(s^{-1}(x))$ for $x \in M$. We may denote an orbit by $G \cdot x$ or $x \cdot G$. We denote the space of orbits by $M/G$, and equip it with its quotient diffeology. We call a Lie groupoid \define{\'{e}tale} if $\dim G = \dim M$, in which case $s$ and $t$ are local diffeomorphisms.
\begin{remark}
  \label{rem:5}
  It is possible to equip orbits with a not-necessarily second-countable manifold structure, such that $t:s^{-1}(x) \to G \cdot x$ a principal $G_x$-bundle, and the inclusion $G \cdot x \hookrightarrow M$ is an injective immersion. We may lose second-countability because the arrow space may not be second-countable. Therefore, it is possible that this manifold structure on $G \cdot x$ does not coincide with the subset diffeology. This is often the case for germ groupoids, see Example \ref{ex:4}.
\end{remark}

A \define{map} of Lie groupoids is a smooth functor $\varphi:G \to H$. Our first example of Lie groupoids are action groupoids. Suppose that $H$ is a Lie group acting smoothly on $N$ from the right. Then the \define{action groupoid} $N \rtimes H$ has arrows $N \times H$ and base $N$. A pair $(y,h)$ is an arrow $(y,h):y\cdot h \mapsto y$, and the multiplication is $(y,h)\cdot(y',h') := (y,hh')$. A map between action groupoids is equivalent to an equivariant smooth map.

We will use the bicategory of Lie groupoids, principal bibundles, and isomorphisms of bibundles. This requires a bit of work to define. A \define{right action} of a Lie groupoid $H$ on a manifold $P$ consists of
\begin{equation*}
  \text{an \define{anchor} } a:P \to N \text{ and a \define{multiplication} } \mu:P \fiber{a}{t} H \to N,
\end{equation*}
where we denote $p\cdot h := \mu(p,h)$, such that $a(p\cdot h) = s(h)$, and $(p\cdot h)\cdot h' = p\cdot (hh')$ whenever this makes sense, and $p\cdot 1_{a(p)} = p$ for all $p$. We think of the anchor as a ``source'' map determining which elements of $H$ may act on $p$. A \define{(right) principal} $H$\define{-bundle} is a manifold $P$ with right action of $H$, together with an $H$-invariant surjective submersion $\pi:P \to M$ (the bundle map) such that
\begin{equation*}
  P \fiber{a}{t} H \to P \fiber{\pi}{\pi} P, \quad (p,h) \mapsto (p, p\cdot h)
\end{equation*}
is a diffeomorphism. Principal bundles may also be defined with local trivializations, as done in \cite{KM22}. We can similarly define left actions and left principal $G$-bundles.
\begin{definition}
  A \define{bibundle} $P: G \to H$ is a manifold $P$ equipped with a left $G$ action, with anchor $a$, and right $H$ action, with anchor $b$, such that $a$ and $b$ are $H$ and $G$-invariant, respectively. We can use the picture
  \begin{equation*}
    \begin{tikzcd}
      G \circlearrowright & P \ar[dl, "a"'] \ar[dr, "b"]&  \circlearrowleft H \\
      M & & N,
    \end{tikzcd}
  \end{equation*}
  and we may or may not include $G$ and $H$. A bibundle is \define{(right) principal} if $a$ is a right principal $H$-bundle. A bibundle is \define{biprincipal} if both $a$ and $b$ are (right) principal $H$ and (left) principal $G$-bundles. When a biprincipal bibundle $P:G \to H$ exists, we say $G$ and $H$ are \define{Morita equivalent}. 
\end{definition}
\begin{remark}
  \label{rem:1}
  Given a bibundle $P:G \to H$, we have a natural opposite bibundle $P^{\text{op}}:H \to G$. Its left anchor is $b$, right anchor is $a$, and the $H$ and $G$ actions are
  \begin{equation*}
    h \cdot p := p \cdot h^{-1} \text{ and } p \cdot g := g^{-1}\cdot p.
  \end{equation*}
  The bibundle $P$ is right $H$-principal if and only if $P^{\text{op}}$ is left $H$-principal.
\end{remark}

Given two bibundles $P,Q:G \to H$, a \define{bibundle morphism} $\alpha:P \to Q$ is a smooth map that is $G$ and $H$ equivariant. If $P$ and $Q$ are both principal bibundles, then bibundle morphisms are necessarily isomorphisms.

\begin{example}
\label{ex:1}
  To every map of Lie groupoids $\varphi:G \to H$, we can associate a principal bibundle $\langle \varphi \rangle:G \to H$ by taking $\langle \varphi \rangle := M \fiber{\varphi}{t} H$, with anchors $\pr_1$ and $s\pr_2$, and actions
  \begin{equation*}
    g \cdot (x,h) := (t(g), \varphi(g)h) \text{ and } (x,h) \cdot h' := (x,hh').
  \end{equation*}
  Two principal bibundles $\langle \varphi \rangle$ and $\langle \psi \rangle$ are isomorphic if and only if $\varphi$ and $\psi$ are naturally isomorphic. 
\end{example}

We may compose principal bibundles: if $P:G \to H$ and $Q:H \to K$ are principal bibundles, with anchors $a,b$ and $a',b'$, their composition is $Q\circ P := P \fiber{a'}{b} Q/H$, where the $H$ action is $(p,q) \cdot h := (p\cdot h, h^{-1}\cdot q)$. The anchors are
\begin{equation*}
  \alpha([p,q]) := a(p) \text{ and } \beta'([p,q]) := b'(q),
\end{equation*}
and $G$ and $H$ act by multiplication on the factors. However, this composition is not associative; instead, $R \circ (Q\circ P)$ is isomorphic to $(R \circ Q)\circ P$. Thus Lie groupoids, principal bibundles, and bibundle isomorphisms form a bicategory, which we denote \textbf{Bi} (for ``bibundle''). Biprincipal bibundles are the isomorphisms in this category, and the inverse of $P$ is $P^{\text{op}}$.

The functor $\mbf{F}:\mbf{Bi} \to \mbf{Diffeol}$ was described in \cite{Wat22}, and we present it as a proposition.
\begin{proposition}
  \label{prop:10}
  The there is a quotient functor $\mbf{F}:\mathbf{Bi} \to \mathbf{Diffeol}$ which takes $G \rra M$ to $M/G$, is trivial on 2-arrows, and takes a principal bibundle $P:G \to H$ to the unique map $\ol{P}:M/G \to N/H$ making the diagram below commute
  \begin{equation}\label{eq:1}
    \begin{tikzcd}
    & P \ar[dl, "a"'] \ar[dr, "b"] & \\
    M\ar[d, "\pi_G"] & & N \ar[d,"\pi_H"] \\
    M/G \ar[rr, "\ol{P}"] & & N/H.
  \end{tikzcd}
\end{equation}
\end{proposition}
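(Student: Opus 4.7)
The plan is to construct $\ol{P}$ explicitly from $P$ using the principal structure, verify it is well-defined, smooth, and then check functoriality and triviality on 2-arrows. Throughout I will use the fact that for a principal bibundle $P:G \to H$, the left anchor $a:P \to M$ is a surjective submersion (it is even a right principal $H$-bundle), and that by $H$-invariance of $b$ we have $b(p)$ and $b(p\cdot h)$ in the same $H$-orbit.

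First I would \emph{construct} $\ol{P}$ as follows. Given $[x] \in M/G$, pick any $p \in a^{-1}(x)$ (possible by surjectivity) and set $\ol{P}([x]) := [b(p)]$. Well-definedness requires two checks. If $a(p) = a(p')$, then principality of $a$ yields a unique $h \in H$ with $p' = p \cdot h$, so $b(p') = s(h)$ lies in the $H$-orbit of $b(p) = t(h)$. If instead $[x] = [x']$ in $M/G$, pick $g:x \mapsto x'$ and note $a(g \cdot p) = x'$ while $b(g \cdot p) = b(p)$ by $G$-invariance of $b$. Uniqueness of $\ol{P}$ is forced by commutativity of \eqref{eq:1}, since $\pi_G \circ a:P \to M/G$ is surjective. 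For \emph{smoothness}, given a plot $q:\msf{U} \to M/G$, it suffices to work locally: every plot of $M/G$ locally lifts to a plot $\ti{q}:\msf{V} \to M$ along $\pi_G$, and since $a$ is a surjective submersion it admits smooth local sections $\sigma$, so $\pi_H \circ b \circ \sigma \circ \ti{q}$ is a plot of $N/H$ agreeing with $\ol{P} \circ q$ on $\msf{V}$.

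Next I would check \emph{functoriality}. For the identity bibundle $\langle \id_G \rangle = G:G \to G$ with $a = t$, $b = s$, we get $\ol{\langle \id_G \rangle}([x]) = [s(1_x)] = [x]$. For composition, given $P:G \to H$ and $Q:H \to K$, the composite $Q \circ P = P \fiber{a'}{b} Q /H$ has left anchor $[p,q] \mapsto a(p)$ and right anchor $[p,q] \mapsto b'(q)$. Thus $\ol{Q\circ P}([x])$ is computed by picking $p \in a^{-1}(x)$ and then $q \in (a')^{-1}(b(p))$, yielding $[b'(q)]$, which is exactly $\ol{Q}(\ol{P}([x]))$. For \emph{triviality on 2-arrows}: a bibundle isomorphism $\alpha:P \to Q$ is biequivariant and commutes with anchors, so for any $p \in a_P^{-1}(x)$ we have $\alpha(p) \in a_Q^{-1}(x)$ and $b_Q(\alpha(p)) = b_P(p)$, hence $\ol{P} = \ol{Q}$; since $\mbf{Diffeol}$ is a 1-category, this is all that is needed.

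The only step that is not completely routine is establishing well-definedness and smoothness of $\ol{P}$ simultaneously, since we must invoke principality of $a$ (for the $H$-ambiguity in $b(p)$) together with the existence of smooth local sections of $a$ (for smoothness of $\ol{P}$). I would handle both by emphasizing up front that $a:P \to M$ is a surjective submersion, so that the same local sections produce both the pointwise values and the smooth lifts needed for plots. The remaining verifications — associativity coherences are absorbed by passing through the defining universal property in \eqref{eq:1}, and unit coherences follow from the identity-bibundle computation above — are purely diagrammatic.
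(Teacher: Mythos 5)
Your proof is correct. Note that the paper does not actually prove Proposition \ref{prop:10} --- it presents the statement as a summary of the construction in the cited reference \cite{Wat22} --- but your explicit construction (define $\ol{P}[x] := [b(p)]$ for any $p \in a^{-1}(x)$, use principality of $a$ and $G$-invariance of $b$ for well-definedness, local sections of the surjective submersion $a$ together with local lifts along $\pi_G$ for smoothness, and equivariance of bibundle morphisms for triviality on 2-arrows) is exactly the standard argument the paper is implicitly relying on, and all your verifications are consistent with the paper's conventions for anchors and actions.
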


We now introduce the submersions in \textbf{Bi}.
\begin{definition}
  A principal bibundle $P:G \to H$ is a \define{surjective submersion} if its right anchor is a surjective submersion. We also say ``$P$ is a surjective submersive bibundle.''
\end{definition}
If we call $P$ a ``submersion'' when the right anchor is a submersion, the submersions between holonomy groupoids of regular foliations are the submersions between leaf spaces of foliations originally defined by Hilsum and Skandalis \cite{HS87}.
\begin{proposition}
  \label{prop:1}
    To be a surjective submersion is invariant under isomorphism of bibundles. It is also preserved by composition.
  \end{proposition}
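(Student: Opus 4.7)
The statement splits into two claims. For isomorphism invariance, let $\alpha : P \to Q$ be a bibundle morphism between principal bibundles $G \to H$, with right anchors $b_P$ and $b_Q$. Since $P$ and $Q$ are principal, $\alpha$ is automatically an isomorphism, hence a diffeomorphism. Equivariance under the right $H$-actions forces $b_Q \circ \alpha = b_P$, so $b_Q = b_P \circ \alpha^{-1}$ is a surjective submersion if and only if $b_P$ is.

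For composition, let $P : G \to H$ and $Q : H \to K$ be surjective submersive bibundles, with right anchors $b, b'$, and with $Q$ having left anchor $a'$. The composition is $Q \circ P = (P \fiber{b}{a'} Q)/H$ with right anchor $\beta'([p,q]) = b'(q)$. Let $\pi : P \fiber{b}{a'} Q \to Q \circ P$ denote the quotient map, which is itself a surjective submersion because the $H$-action on the fiber product is principal. By construction, $\beta' \circ \pi = b' \circ \pr_2$, where $\pr_2 : P \fiber{b}{a'} Q \to Q$ is the second projection. Since $\pr_2$ is the pullback of the surjective submersion $b$ along $a'$, it is itself a surjective submersion, and composition with the surjective submersion $b'$ retains that property. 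Hence $b' \circ \pr_2$ is a surjective submersion onto the base of $K$.

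It remains to transfer this property along $\pi$ to $\beta'$. For this I would invoke the standard lemma that if $\pi : X \to Y$ is a surjective submersion and $f \circ \pi : X \to Z$ is a submersion, then $f : Y \to Z$ is a submersion: given $y \in Y$, pick any $x \in \pi^{-1}(y)$, choose a local section $\sigma$ of $f \circ \pi$ defined near $f(y)$ with $\sigma(f(y)) = x$, and observe that $\pi \circ \sigma$ is a local section of $f$ through $y$. Applying this with $f = \beta'$ shows that $\beta'$ is a submersion, and its image contains that of $b' \circ \pr_2$, namely the whole base of $K$. There is no substantive obstacle; the mild care required is that bibundles need not be Hausdorff nor second-countable, so we rely throughout on the local-section characterization of submersivity rather than on any global construction.
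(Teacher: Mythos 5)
Your proof is correct and follows essentially the same route as the paper's: isomorphism invariance via the automatic invertibility of morphisms between principal bibundles, and closure under composition via the commutative square relating $\beta'\circ\pi$ to $b'\circ\pr_2$, showing $\pr_2$ is a surjective submersion as a pullback of $b$, and descending along the quotient map $\pi$. The only difference is cosmetic: you spell out the descent lemma for submersions along $\pi$, which the paper leaves implicit in the phrase ``it suffices to show that $\pr_2$ is a surjective submersion.''
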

  \begin{proof}
    Suppose that $Q,P:G \to H$ are principal bibundles, $P$ is a surjective submersion, and $\alpha:Q \to P$ is a bibundle isomorphism. Then $\alpha$ is a diffeomorphism, and because $b_P$ is a surjective submersion, so is $b_Q = b_P \alpha$.

    Now suppose that $P:G \to H$ and $Q:H \to K$ are surjective submersions, with anchors $a,b$ and $a',b'$ respectively. The composition $Q \circ P$ is the quotient of $P \fiber{}{N} Q$ by the $H$-action $(p,q) \cdot h := (p \cdot h, h^{-1}\cdot q)$. The anchors are
    \begin{equation*}
      \alpha([p,q]) := a(p) \text{ and } \beta'([p,q]) := b'(q),
    \end{equation*}
    and the left $G$ and right $K$-actions are given by multiplication on the left and right factors. We have the commutative diagram ($K$ has base $L$)
    \begin{equation*}
      \begin{tikzcd}
        P\fiber{}{N} Q \ar[r, "\pr_2"] \ar[d, twoheadrightarrow, "\pi"] & Q \ar[d, twoheadrightarrow, "b'"] \\
        Q \circ P \ar[r, "\beta'"] & L,
      \end{tikzcd}
    \end{equation*}
    where the double-headed arrows are surjective submersions. Thus it suffices to show that $\pr_2$ is a surjective submersion. Let $q \in Q$, and choose $(p,q) \in P\fiber{}{N}Q$. We can find $p$ such that $b(p) = a'(q)$ because $b$ is surjective. Since $b$ is a submersion, let $\tau$ be a local section of $b$ taking $a'(q)$ to $p$. Then
    \begin{equation*}
      (Q,q) \dashrightarrow (P \fiber{}{N} Q, (p,q)), \quad q' \mapsto (\tau a'(q'), q')
    \end{equation*}
    is a local section of $\pr_2$ taking $q$ to $(p,q)$. Thus $\pr_2$ is a surjective submersion.
  \end{proof}
  Thus we may form the bicategory $\mbf{Bi}^{\twoheadrightarrow}$, whose objects are Lie groupoids, arrows are surjective submersive bibundles, and 2-arrows are isomorphisms of bibundles.

  \begin{proposition}
    \label{prop:2}
The quotient functor $\mbf{F}:\mbf{Bi} \to \mbf{Diffeol}$ restricts to $\mbf{Bi}^{\twoheadrightarrow} \to \mbf{Diffeol}^{\twoheadrightarrow}$.
\end{proposition}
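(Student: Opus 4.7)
The plan is to show that for a surjective submersive bibundle $P:G\to H$ with anchors $a:P\to M$ and $b:P\to N$, the induced map $\ol{P}:M/G \to N/H$ is both surjective and a local subduction. Surjectivity is immediate from diagram \eqref{eq:1}: given $\eta \in N/H$, pick $y \in \pi_H^{-1}(\eta)$ and, using surjectivity of $b$, some $p \in b^{-1}(y)$; then $\ol{P}(\pi_G(a(p))) = \pi_H(b(p)) = \eta$.

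For the local subduction property, first recall that the orbit projections $\pi_G$ and $\pi_H$ are themselves local subductions. Given a pointed plot $p:(\msf{U},r)\to (M/G, p(r))$ and $x \in \pi_G^{-1}(p(r))$, the quotient diffeology provides a local lift $q:(\msf{V},r)\to M$ with $\pi_G q = p|_{\msf{V}}$. Then $q(r)$ and $x$ lie in the same $G$-orbit, so after choosing an arrow $g:q(r)\mapsto x$ and a local section $\sigma$ of $s$ with $\sigma(q(r))=g$, the plot $t\sigma q$ still lifts $p$ and now passes through $x$ at $r$.

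Now consider a pointed plot $q:(\msf{U},r)\to (N/H, q(r))$ together with $\xi \in M/G$ such that $\ol{P}(\xi) = q(r)$. The construction of the lift proceeds in three steps: first, pick a representative $x_0 \in \pi_G^{-1}(\xi)$, then, using surjectivity of the principal bundle projection $a$, a point $p_0 \in a^{-1}(x_0)$, and set $y_0 := b(p_0)$; commutativity of \eqref{eq:1} gives $\pi_H(y_0) = \ol{P}(\xi) = q(r)$. Second, invoke the local subduction property of $\pi_H$ to lift $q$ locally to a plot $\tilde{q}:(\msf{V},r)\to (N,y_0)$. Third, choose a local section $\tau$ of the submersion $b$ sending $y_0$ to $p_0$, form $\hat{q} := \tau\tilde{q}$ on some neighbourhood $\msf{W}\subseteq \msf{V}$ of $r$, and take $\pi_G a \hat{q}:(\msf{W},r)\to (M/G, \xi)$ as the desired lift. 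Diagram \eqref{eq:1} yields $\ol{P}\circ \pi_G a \hat{q} = \pi_H b \hat{q} = \pi_H \tilde{q}|_{\msf{W}} = q|_{\msf{W}}$, and by construction the lift sends $r$ to $\pi_G(x_0) = \xi$.

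The routine verifications are the commutativities at each stage, which follow directly from \eqref{eq:1}. The main obstacle is ensuring the lift passes through the prescribed point $\xi$: this is precisely what forces the use of the local subduction property of $\pi_H$ (rather than mere subductivity), and why we need $b$ to be a submersion, in order to obtain the local section $\tau$ landing at $p_0$ with $y_0$ already fixed.
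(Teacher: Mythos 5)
Your proof is correct and follows essentially the same route as the paper's: fix a point $p_0$ in the $a$-fiber over a representative of $\xi$, lift the plot along the local subduction $\pi_H$ to a plot pointed at $b(p_0)$, compose with a local section of the submersion $b$ landing at $p_0$, and push down by $\pi_G a$. The only differences are that you spell out the surjectivity of $\ol{P}$ and the fact that the orbit maps $\pi_G,\pi_H$ are local subductions, both of which the paper leaves implicit (the latter is stated in Remark \ref{rem:7}).
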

\begin{proof}
It suffices to show that if $P:G \to H$ is a surjective submersion, then $\ol{P}$ as defined by the diagram \eqref{eq:1} is a surjective local subduction. We recall that \eqref{eq:1} reads
    \begin{equation*}
      \ol{P} \pi_G a = \pi_H b.
    \end{equation*}
    Take $\pi_H(y) \in N/H$, represented by $y \in N$. Say $\ol{P}$ maps $\pi_G(x) \in M/G$ to $\pi_H(y)$, and let $x$ represent $\pi_G(x)$. Fix $p \in a^{-1}(x)$. Then, by the defining equation for $\ol{P}$, both $b(p)$ and $y$ are in the same $H$-orbit.

    Let $\msf{p}:(\msf{U},r) \to (N/H, \pi_H(y))$ be a pointed plot. We must lift $\msf{p}$ to a plot of $M/G$ pointed at $\pi_G(x)$. Since $\pi_H$ is a local subduction mapping $b(p)$ to $\pi_H(y)$, lift $\msf{p}$ along $\pi_H$ to a plot $\msf{q}:(\msf{U},r) \dashrightarrow (N,b(p))$. As $b$ is a surjective submersion, let $\tau$ be a local section of $b$ taking $b(p)$ to $p$. Then, on its domain,
    \begin{equation*}
      (\ol{P} \pi_G a) \tau \msf{q} = \pi_H (b \tau) \msf{q} = \pi_H \msf{q} = \msf{p}.
    \end{equation*}
    Since we also have $\pi_G a \tau q(r) = \pi_G(x)$, the map $\pi_G a \tau \msf{q}$ is the required local lift.
  \end{proof}
  
  We will end this section by discussing differentiable stacks, which we will call simply ``stacks.'' We refer to \cite{Blo08,Ler10,BX11} for technical details on stacks. Stacks form a strict 2-category $\mbf{St}$, and there is an equivalence of bicategories $\mbf{B}:\mbf{Bi} \to \mbf{St}$ (we use the notation from \cite{Ler10}). Thus every stack is isomorphic to one of the form $[M/G] := \mbf{B}G$, and Morita equivalent Lie groupoids represent isomorphic stacks. A principal bibundle $P:G \to H$ gives a map $\mbf{B}P:[M/G] \to [N/H]$, and every map of stacks is isomorphic to one induced by a principal bibundle. In light of Proposition \ref{prop:1}, we can make the following definition:
  \begin{definition}
  A map of differentiable stacks $\mbf{X} \to \mbf{Y}$ is a surjective submersion if it is isomorphic to $\mbf{B}P$ for some surjective submersive bibundle $P:G \to H$, up to the isomorphisms $\mbf{X} \cong [M/G]$ and $\mbf{Y} \cong [N/H]$.
  \end{definition}
   Hoyo and Fernandes \cite[Section 6]{HF19} give a different definition of surjective submersions between stacks, in terms of maps of Lie groupoids. We say a map of Lie groupoids $\varphi:G \to H$ is \define{fully faithful} if
      \begin{equation*}
        \begin{tikzcd}
          G \ar[r, "\varphi"] \ar[d, "{(s,t)}"] & H \ar[d, "{(s,t)}"] \\
          M \times M \ar[r, "\varphi \times \varphi"] & N \times N
        \end{tikzcd}
      \end{equation*}
      is a pullback, \define{essentially surjective} if
      \begin{equation*}
        t\pr_1:H \fiber{s}{\varphi} M \to N
      \end{equation*}
      is a surjective submersion, and \define{Morita} if it is fully faithful and essentially surjective.
      \begin{example}
        \label{ex:2}
        A map is Morita if and only if its associated bibundle (Example \ref{ex:1}) is biprincipal. Many Morita maps come from covers: if $\cl{U}$ is a cover of $M$, the induced functor from the covering groupoid to the trivial groupoid is a Morita map.
      \end{example}

      A \define{weak map} $G \xleftarrow{\varphi} K \xrightarrow{\psi} H$, also denoted $\psi/\varphi:G \to H$, consists of maps $\varphi:K \to G$ and $\psi:K \to H$ such that $\varphi$ is Morita.  Every map of stacks $[M/G] \to [N/H]$ is represented by a weak map, and Hoyo and Fernandes define a surjective submersion to be a map of stacks represented by a weak map with $\psi$ essentially surjective. This notion is invariant under isomorphism of 1-arrows in $\mbf{St}$.

      To compare Hoyo and Fernandes's definition to ours, we note that if $\varphi:G \to H$ is a map of Lie groupoids, the corresponding principal bibundle $\langle \varphi \rangle$ (Example \ref{ex:1}) and the weak map $G \xleftarrow{\id} G \xrightarrow{\varphi} H$ correspond to isomorphic maps of stacks. More generally, a weak map $\psi/\varphi$ and the principal bibundle $\langle \psi \rangle \circ \langle \varphi \rangle^{\text{op}}$ represent isomorphic maps of stacks.
      % Conversely, given a principal bibundle $P:G \to H$, we can define a weak map
    %     \begin{equation*}
    %   \begin{tikzcd}
    %     G \ar[d, shift left] \ar[d, shift right] & G \ltimes P \rtimes H \ar[d, shift left] \ar[d, shift right] \ar[l, "\pr_1"'] \ar[r, "\pr_3"] & H \ar[d, shift left] \ar[d, shift right] \\
    %     M & P \ar[l, "a"'] \ar[r, "b"] & N,
    %   \end{tikzcd}
    % \end{equation*}
    % where $G \ltimes P \rtimes H$ is the action groupoid
    % \begin{equation*}
    %   G \ltimes P \rtimes H := \{(g,p,h) \mid s(g) = a(p), \ b(p) = s(h)\},
    % \end{equation*}
    % with source $(g,p,h) \mapsto p$, and target $(g,p,h)\mapsto gph^{-1}$. The weak map $\pr_2/\pr_1$ and principal bibundle $P$ correspond to isomorphic maps of stacks.
    \begin{proposition}
Suppose the principal bibundle $P:G \to H$ and the weak map $\psi/\varphi:G \to H$ represent isomorphic maps of differentiable stacks. Then $P$ is a surjective submersion if and only if $\psi$ is essentially surjective.
  \end{proposition}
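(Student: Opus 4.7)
The plan is to translate everything into the language of bibundles using the correspondence between weak maps and their associated principal bibundles, and then invoke Proposition \ref{prop:1} together with invertibility of biprincipal bibundles.

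First I would reduce to a question about bibundles. Since $P$ and $\psi/\varphi$ represent isomorphic maps of stacks, the remarks preceding the proposition give a bibundle isomorphism $P \cong \langle \psi \rangle \circ \langle \varphi \rangle^{\text{op}}$. By Proposition \ref{prop:1}, being a surjective submersion is an isomorphism invariant of bibundles, so it suffices to prove
\begin{equation*}
  \langle \psi \rangle \circ \langle \varphi \rangle^{\text{op}} \text{ is a surjective submersion} \iff \psi \text{ is essentially surjective}.
\end{equation*}

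Next I would establish the elementary dictionary that $\psi:K \to H$ is essentially surjective if and only if $\langle \psi \rangle$ is a surjective submersive bibundle. Writing $L$ for the base of $K$ and $\psi_L:L\to N$ for its object map, the bibundle $\langle \psi \rangle = L \fiber{\psi_L}{t} H$ has right anchor $s\pr_2$. Essential surjectivity asks that $t\pr_1:H\fiber{s}{\psi_L} L \to N$ be a surjective submersion. The map $(h,l) \mapsto (l,h^{-1})$ is a diffeomorphism $H \fiber{s}{\psi_L} L \to L \fiber{\psi_L}{t} H$ intertwining $t\pr_1$ with $s\pr_2$, so the two conditions coincide.

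Now I would exploit that $\varphi$ being Morita (Example \ref{ex:2}) means $\langle \varphi \rangle$ is biprincipal, hence so is $\langle \varphi \rangle^{\text{op}}$ by Remark \ref{rem:1}, and in particular its right anchor is a principal bundle projection and thus a surjective submersion. For the forward direction, if $\langle \psi \rangle \circ \langle \varphi \rangle^{\text{op}}$ is a surjective submersion, I would compose on the right with the biprincipal (and therefore surjective submersive) bibundle $\langle \varphi \rangle$ and use associativity together with $\langle \varphi \rangle^{\text{op}} \circ \langle \varphi \rangle \cong \mathrm{id}_G$ to obtain an isomorphism with $\langle \psi \rangle$. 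By Proposition \ref{prop:1} the composition is still a surjective submersion, hence so is $\langle \psi \rangle$. For the reverse direction, if $\langle \psi \rangle$ is a surjective submersion, then Proposition \ref{prop:1} applied to the composition with $\langle \varphi \rangle^{\text{op}}$ immediately shows $\langle \psi \rangle \circ \langle \varphi \rangle^{\text{op}}$ is a surjective submersion. Combined with the dictionary above, this finishes the proof.

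The only step requiring care is the bookkeeping in the dictionary between essential surjectivity of $\psi$ and surjective submersiveness of $\langle \psi \rangle$; the rest is a formal consequence of invariance and composition properties already recorded in Proposition \ref{prop:1}, together with invertibility of biprincipal bibundles in $\mbf{Bi}$.
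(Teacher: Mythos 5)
Your proof is correct and follows essentially the same route as the paper: reduce to $\langle \psi \rangle \circ \langle \varphi \rangle^{\text{op}}$ via invariance under isomorphism, identify essential surjectivity of $\psi$ with surjective submersiveness of $\langle \psi \rangle$ through the diffeomorphism $(z,h)\mapsto(h^{-1},z)$, and finish with Proposition \ref{prop:1}; you are in fact slightly more explicit than the paper about the forward direction, where one recovers $\langle \psi \rangle$ by precomposing with $\langle \varphi \rangle$. The only blemish is the harmless slip $\langle \varphi \rangle^{\text{op}} \circ \langle \varphi \rangle \cong \id_K$ (the identity bibundle of $K$, not of $G$).
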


  \begin{proof}
By the discussion above, the map of stacks represented by $\psi/\varphi$ is isomorphic to $\mbf{B}(\langle \psi \rangle \circ \langle \varphi\rangle^{\text{op}})$, which by assumption must be isomorphic to $\mbf{B}P$. Thus $P$ and $\langle \psi \rangle \circ \langle \varphi\rangle^{\text{op}}$ are isomorphic, so by Proposition \ref{prop:1}, $P$ is a surjective submersion if and only if the same is true for $\langle \psi \rangle \circ \langle \varphi\rangle^{\text{op}}$. Since $\langle \varphi \rangle^{\text{op}}$ is a Morita equivalence, it is a surjective submersion. The bibundle $\langle \psi \rangle$ is given by
    \begin{equation*}
      \begin{tikzcd}
        & \langle \psi \rangle := L \fiber{\psi}{t} H \ar[dl, "\pr_1"'] \ar[dr, "s \pr_2"]& \\
        L & & N.
    \end{tikzcd}
    \end{equation*}
The map
    \begin{equation*}
      \langle \psi \rangle \to H \fiber{s}{\psi} L, \quad (z,h) \mapsto (h^{-1},z)
    \end{equation*}
    is a diffeomorphism. Thus, since $s\pr_2(z,h) = t\pr_1(h^{-1},z)$, we see that since $\psi$ is essentially surjective if and only if $\langle \psi \rangle$ is a surjective submersion. By Proposition \ref{prop:1}, we conclude that $\psi$ is essentially surjective if and only if the composition $\langle \psi \rangle\circ \langle \varphi\rangle^{\text{op}}$ is a surjective submersion.
  \end{proof}
  
  Thus our surjective submersions of stacks coincide with those defined by Hoyo and Fernandes. Hoyo and Fernandes \cite{HF19} also explain that their definition is equivalent to Bursztyn, Noseda, and Zhu's \cite{BNZ20}. Therefore, all three notions of surjective submersions of stacks agree.
\section{Lift-complete Lie groupoids}
\label{sec:lift-complete-lie-1}

We want a sub-bicategory of Lie groupoids, whose arrows are surjective submersive bibundles, that is equivalent to $\mbf{QUED}^{\twoheadrightarrow}$. To define this bicategory, we use pseudogroups.

A \define{transition} from a manifold $M$ to a manifold $N$ is a diffeomorphism $f:U \to V$, where $U$ and $V$ are open subsets of $M$ and $N$. We will write $f:M \dashrightarrow N$ to denote a map defined from an open subset of $M$ to an open subset of $N$, and say that $f$ is \define{locally defined}. Recall that $\germ{f}{x}$ denotes the germ of $f$ at $x$.  Given transitions $f:M \dashrightarrow N$ and $g:N \dashrightarrow P$, we understand their composition $g \circ f$ to be defined on $f^{-1}(\dom g)$. This is possibly the empty map.

\begin{definition}
  \label{def:2}
  A \define{pseudogroup} on a manifold $M$ is a collection $\Psi$ of transitions of $M$ such that
  \begin{enumeratea}
  \item $\Psi$ is closed under composition and inversion,
  \item for every open $U \subseteq M$, the restriction $\id|_U:U \to U$ is in $\Psi$, and
  \item if $f:M \dashrightarrow M$ is a transition and, for every $x$ in its domain, $\germ{f}{x}$ has a representative in $\Psi$, then $f$ is in $\Psi$.
  \end{enumeratea}
\end{definition}

Items (a) and (b) together imply that $\Psi$ is closed under restriction to arbitrary open subsets. Items (b) and (c) are sheaf-like properties. The \define{orbit} of $\Psi$ through $x \in M$ is the set $\{\psi(x) \mid \psi \in \Psi\}$. A pseudogroup is \define{countably generated} if there is some countable set of transitions $\Psi_0$ such that every transition in $\Psi$ is locally the composition of transitions in $\Psi_0$. In this case, each orbit is countable, hence totally disconnected.

Take a Lie groupoid $G$. Given an arbitrary section $\sigma$ of $s$, the composition $t \sigma$ is not necessarily a transition. When it is, we call $\sigma$ a \define{local bisection} of $G$. We then associate to $G$ the pseudogroup
\begin{equation*}
  \Psi(G) := \{t\sigma \mid \sigma \text{ is a local bisection of } G\}.
\end{equation*}
\begin{lemma}[{\cite[Proposition 5.3]{MM03}}]
  \label{lem:8}
  For every arrow $g:x \mapsto y$ in $G$, there is some local bisection $\sigma$ such $\sigma(x) = g$. Thus the orbits of $\Psi(G)$ are the orbits of $G$.
\end{lemma}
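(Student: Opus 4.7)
The plan is to construct, for each arrow $g:x\mapsto y$, a local submanifold $S\subset G$ through $g$ of dimension $\dim M$ such that both $s|_S$ and $t|_S$ are local diffeomorphisms at $g$; then $\sigma := (s|_S)^{-1}$ is a section of $s$ through $g$ whose target composition $t\sigma = t\circ (s|_S)^{-1}$ is a diffeomorphism onto its image, hence a local bisection with $\sigma(x) = g$.

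The construction of $S$ reduces to a linear-algebraic problem at the tangent level: I want a subspace $W\subset T_g G$ of dimension $\dim M$ that is simultaneously a complement of $K_s := T_g(s^{-1}(x)) = \ker ds|_g$ and of $K_t := T_g(t^{-1}(y)) = \ker dt|_g$. Because $s$ and $t$ are submersions of the same codomain $M$, we have $\dim K_s = \dim K_t = \dim G - \dim M$, and a standard fact from linear algebra guarantees that two subspaces of equal dimension in a vector space always admit a common complement. I would verify this by picking a complement $C$ of $K_s + K_t$, choosing an isomorphism between complements of $K_s\cap K_t$ inside $K_s$ and inside $K_t$, and assembling $W$ from $C$ together with the graph of that isomorphism inside $K_s + K_t$. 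Once $W$ exists, I take any embedded disk $S\subset G$ through $g$ with $T_g S = W$; the inverse function theorem, applied to $s|_S$ and $t|_S$, then makes both into local diffeomorphisms after shrinking $S$.

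The second statement is almost immediate. If $\psi = t\sigma$ lies in $\Psi(G)$, then for every $z$ in the domain of $\sigma$ the arrow $\sigma(z)\colon z \mapsto \psi(z)$ of $G$ witnesses that $z$ and $\psi(z)$ lie in the same $G$-orbit, so every $\Psi(G)$-orbit sits inside a $G$-orbit. Conversely, if $x$ and $y$ belong to the same $G$-orbit, pick any arrow $g\colon x\mapsto y$, apply the first part of the lemma to get a local bisection $\sigma$ with $\sigma(x)=g$, and observe $t\sigma(x)=y$, so $y$ lies in the $\Psi(G)$-orbit of $x$.

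The only nontrivial step is the common-complement assertion; everything else is routine application of the submersion structure of $s$ and $t$, plus the inverse function theorem. The main obstacle is essentially the observation that a naive local section of $s$ through $g$ need not have $t$-composition of full rank at $x$, which is exactly what the choice of the tangent plane $W$ repairs.
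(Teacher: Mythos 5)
Your proof is correct: the common-complement construction for $\ker ds|_g$ and $\ker dt|_g$ (which works because both kernels have dimension $\dim G - \dim M$), followed by the inverse function theorem applied to $s|_S$ and $t|_S$, and the two easy inclusions between the orbit relations, all check out, and the argument is purely local so the possible non-Hausdorffness of $G$ is harmless. The paper does not prove this lemma itself but cites \cite[Proposition 5.3]{MM03}, and your argument is essentially the standard one given there.
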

Conversely, to a pseudogroup $\Psi$ on $M$ we associate the Lie groupoid $\Gamma(\Psi)$, with arrow space consisting of all germs of elements of $\Psi$, base space $M$, source and target
\begin{equation*}
  s(\germ{\psi}{x}) := x \text{ and } t(\germ{\psi}{x}) := \psi(x),
\end{equation*}
and multiplication given by composition. The smooth atlas for $\Gamma(\Psi)$ consists of the charts $x \mapsto \germ{\psi}{x}$, for each $\psi \in \Psi$. The arrow space $\Gamma(\Psi)$ is rarely Hausdorff or second-countable.

\begin{example}
  \label{ex:4}
  For the pseudogroup $\Diff_{\text{loc}}(M)$ consisting of all transitions of $M$, the associated Lie groupoid $\Gamma^M := \Gamma(\Diff_{\text{loc}}(M))$ is the \define{Haefliger groupoid}. When $M$ is connected, the Haefliger groupoid has one orbit, $M$ itself. However, the smooth structure on $M$ given in Remark \ref{rem:5}, for which $t:s^{-1}(x) \to M$ is a principal $(\Gamma^M)_x$-bundle, is that of a 0-dimensional manifold. This is generally the case for the orbit of an arbitrary pseudogroup $\Psi$, when viewed as a $\Gamma(\Psi)$-orbit.
\end{example}
This next remark is an aside, after which we return to the main narrative.
\begin{remark}
  \label{rem:6}
  The situation in Example \ref{ex:4} raises the question whether an orbit of a Lie groupoid (equivalently, of a pseudogroup) admits a manifold structure that is more compatible with the original smooth structure on $M$. For example, we can ask whether the orbit is weakly-embedded: a subset $A \subseteq M$ is a \define{weakly-embedded} submanifold if, with its subset diffeology, it is locally diffeomorphic to Cartesian space, and the inclusion $A \hookrightarrow M$ is an immersion. This notion is intrinsic, as opposed to immersed submanifolds.

  The literature volunteers some partial answers. First, let $\cl{O}$ be an orbit of a Lie groupoid $G$, equipped with its structure as an immersed submanifold. Take a connected component $\cl{O}_0$. By Sussmann's orbit theorem \cite{Sus73}, $\cl{O}_0$ is a maximal integral submanifold of $\rho(\operatorname{Lie}(G))$ (the image of the Lie algebroid of $G$ by its anchor map), and by Stefan's theorem \cite{Ste74} such maximal integral submanifolds are always weakly-embedded. For the Haefliger groupoid, this says that the singletons $\{x\}$ are weakly-embedded.

  Stefan \cite{Ste74} also showed that for a pseudogroup $\Psi$, the orbits of the subgroupoid $\Gamma(\Psi)_0 \leq \Gamma(\Psi)$, whose arrows are germs of transitions that are isotopic to the identity, are weakly-embedded submanifolds. For the Haefliger groupoid with $M$ connected and of dimension at least $2$, Stefan's result equips $M$ with its pre-existing smooth structure. For a proof that the $\Gamma_0^M$-orbit is all of $M$, see \cite{MV94}.

  Using a different approach, Castrigiano and Hayes \cite{CH00} proved that the orbits of a Lie group action are weakly-embedded submanifolds.
\end{remark}

When $G$ is \'{e}tale, we have a functor $\Eff:G \to \Gamma(\Psi(G))$. This is the identity on objects, and takes an arrow $g:x \mapsto y$ to $\germ{t s^{-1}}{x}$, where $s^{-1}$ is the local inverse of $s$ mapping $x$ to $g$. This is the \define{effect} functor, and we denote $\Eff(G) := \Gamma(\Psi(G))$. We call $G$ \define{effective} if $\Eff$ is faithful, in which case $\Eff$ is an isomorphism.

\begin{definition}
  \label{def:5}
  A Lie groupoid $G \rra M$ is \define{lift-complete}\footnote{Thanks to Sergio Zamora Barrera for suggesting the terminology.} if it is \'{e}tale, effective, its orbits are totally disconnected, and
  \begin{enumerate}
  \item[(LC)] whenever $f:U \to M$ is a smooth map such that $\pi f = \pi|_U$, for every $x \in U$ the germ $\germ{f}{x}$ has a representative in $\Psi(G)$.
  \end{enumerate}
  We say a pseudogroup $\Psi$ is \define{lift-complete} if $\Gamma(\Psi)$ is lift-complete. 
\end{definition}
We could consider Lie groupoids satisfying condition \hyperref[def:5]{(LC)} only, but the \'{e}tale, effective, and totally disconnected orbits assumptions will be necessary for all our results, so we bundle them into the definition. Germ groupoids are necessarily \'{e}tale and effective, so a pseudogroup $\Psi$ is lift-complete if and only if its orbits are totally disconnected and condition \hyperref[def:5]{(LC)} holds.
\begin{definition}
  We denote by $\mbf{LiftComp}$ the sub-bicategory of $\mbf{Bi}$ whose objects are lift-complete Lie groupoids. We use $\mbf{LiftComp}^{\twoheadrightarrow}$ to indicate when we only take surjective submersive bibundles for 1-arrows. We let $\mbf{QLiftComp}$ denote the sub-bicategory of $\mbf{LiftComp}$ whose orbit maps are Q-charts.
\end{definition}

\begin{remark}
  \label{rem:7}
  For a Lie groupoid $G$, the orbit map $\pi:M \to M/G$ is a local subduction, and when $G$ is lift-complete, its fibers are totally-disconnected by assumption. The condition \hyperref[def:5]{(LC)} then implies that $\pi$ is quasi-\'{e}tale, thus $M/G$ is quasi-\'{e}tale. It is second-countable because $M$ is second-countable. Therefore the quotient functor $\mbf{F}$ restricts to a functor:
  \begin{itemize}
  \item $\mbf{LiftComp} \to \mbf{QUED}$, as just explained,
  \item $\mbf{F}_{(A)}: \mbf{LiftComp}^{\twoheadrightarrow} \to \mbf{QUED}^{\twoheadrightarrow}$, by Proposition \ref{prop:2}, and
    \item $\mbf{F}_{(B)}:\mbf{QLiftComp} \to \mbf{QMan}$, because quotient maps are surjective. 
   \end{itemize}
\end{remark}
We emphasize that, whereas to be quasi-\'{e}tale is a condition on the orbit map, to be lift-complete is a condition on the groupoid. In particular, even if $\pi:M \to M/G$ is quasi-\'{e}tale, we may have orbit-preserving smooth maps that are not in $\Psi(G)$.

\begin{remark}
  \label{rem:2}
  Given a quotient $\pi:M \to M/{\sim}$ of a manifold, we have the pseudogroup $\Psi(\pi)$ of transitions preserving ${\sim}$. This might be called the gauge pseudogroup of $\pi$. This pseudogroup is not necessarily lift-complete, because generally a smooth map preserving ${\sim}$ need not be a transition. For example, consider the action of $O(2)$ on $\R^2$. The smooth map
  \begin{equation*}
    \R^2\smallsetminus \{(0,0)\} \to \R^2, \quad (x,y) \mapsto (0, \sqrt{x^2+y^2})
  \end{equation*}
  preserves $O(2)$-orbits, but is not a transition, so $\Psi(\pi)$ is not lift-complete here. On the other hand, $\Psi(\pi) = \Psi$ for lift-complete pseudogroups.
\end{remark}

Our two main examples of lift-complete pseudogroups are those induced by certain affine actions on Cartesian space (cf.\ quasifolds), and by certain isometric actions on a Riemannian manifold (cf.\ Riemannian foliations). We have found that these pseudogroups are lift-complete for the same reason, but a sufficient explanation requires a slight detour. We defer it to Subsection \ref{sec:lift-compl-pseud}, and continue here with the question of Morita invariance.

In Subsection \ref{sec:groupoids} we explained that our definition of surjective submersive bibundle gives a notion of a surjective submersion between differentiable stacks. This is because to be a surjective submersion is invariant under isomorphism of principal bibundles. We can also define lift-complete stacks, thanks to the following proposition.

\begin{proposition}
  Suppose $G$ and $H$ are \'{e}tale effective Morita equivalent Lie groupoids.
  \begin{description}
  \item[(A)] If $G$ is lift-complete, so is $H$.
  \item[(B)] If $G$ is an object of $\mbf{QLiftComp}$, so is $H$.
  \end{description}
\end{proposition}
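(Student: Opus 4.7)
The plan is to exploit the biprincipal bibundle $P:G \to H$ realizing the Morita equivalence, together with the étale hypothesis on both $G$ and $H$. Fibers of the anchors $a:P \to M$ and $b:P \to N$ are $H$- and $G$-orbits in $P$, respectively, hence discrete since both groupoids are étale. So $a$ and $b$ are surjective submersions with discrete fibers between manifolds of the same dimension, and therefore local diffeomorphisms. Consequently, around any $p_0 \in P$ there is an open $W_P$ on which both $a$ and $b$ restrict to diffeomorphisms onto open sets $W_M \ni a(p_0)$ and $W_N \ni b(p_0)$, producing a local diffeomorphism $\phi_{p_0} := b \circ a^{-1}: W_M \to W_N$. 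The commutative diagram \eqref{eq:1} yields $\pi_H \circ \phi_{p_0} = \ol{P} \circ \pi_G|_{W_M}$, and since $\ol{P}$ is a diffeomorphism, $\phi_{p_0}$ intertwines $G$-orbits in $W_M$ with $H$-orbits in $W_N$.

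For part (A), totally disconnected $H$-orbits transfer immediately: any plot $q:\msf{U} \to N$ whose image lies in a single $H$-orbit can, locally near any point of $\msf{U}$, be composed with some $\phi_{p_0}^{-1}$ to give a plot of $M$ with image in a single $G$-orbit, hence locally constant by hypothesis. For condition \hyperref[def:5]{(LC)}, given $f:V \to N$ with $\pi_H \circ f = \pi_H|_V$ and $y_0 \in V$, set $y_1 := f(y_0)$, choose $p_0, p_1 \in P$ with $b(p_i) = y_i$, and define $\tilde f := \phi_{p_1}^{-1} \circ f \circ \phi_{p_0}$ on a suitable neighborhood of $x_0 := a(p_0)$. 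The intertwining identity above, together with injectivity of $\ol{P}$, gives $\pi_G \circ \tilde f = \pi_G$ locally, so (LC) for $G$ produces a local bisection $\sigma$ of $G$ whose associated transition $\tilde \psi := t_G \circ \sigma$ has $\germ{\tilde\psi}{x_0} = \germ{\tilde f}{x_0}$, and then $\psi := \phi_{p_1} \circ \tilde\psi \circ \phi_{p_0}^{-1}$ has $\germ{\psi}{y_0} = \germ{f}{y_0}$.

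The main obstacle is to show that $\psi$ actually lies in $\Psi(H)$, i.e., that one can lift $\sigma$ through $P$ to a local bisection of $H$. For $y \in W_N$, set $p := (b|_{W_P})^{-1}(y)$, act on the left by $\sigma(a(p)) \in G$ to obtain $\sigma(a(p)) \cdot p \in P$, and use the right $H$-principality of $a$ (which gives a diffeomorphism $P \fiber{a}{t} H \to P \fiber{a}{a} P$) to write $\sigma(a(p)) \cdot p = (a|_{W_P^{p_1}})^{-1}(\tilde\psi(a(p))) \cdot h(y)$ for a unique smooth $h(y) \in H$. Computing source and target of $h(y)$ using the $H$- and $G$-invariance of $b$ and the identities $\phi_{p_i} = b \circ a^{-1}$ shows $h(y):y \mapsto \psi(y)$; smoothness and uniqueness then exhibit $y \mapsto h(y)$ as a local bisection of $H$ with $t_H \circ h = \psi$, completing the verification that $\psi \in \Psi(H)$.

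For part (B), given (A), it remains to verify that $\pi_H:N \to N/H$ is a Q-chart. Given plots $p, \tilde p:\msf{U} \to N$ with $\pi_H \circ p = \pi_H \circ \tilde p$ and $p(r) = \tilde p(r) =: y_0$, choose $p_0 \in P$ with $b(p_0) = y_0$, shrink $\msf{U}$ about $r$ so that both plots land in $W_N$, and compose with $\phi_{p_0}^{-1}$ to obtain plots of $W_M$ which agree at $r$ and which, by the intertwining identity and the injectivity of $\ol{P}$, descend to the same map into $M/G$. The Q-chart hypothesis on $\pi_G$ then forces $\germ{\phi_{p_0}^{-1} p}{r} = \germ{\phi_{p_0}^{-1} \tilde p}{r}$, and composing with $\phi_{p_0}$ yields $\germ{p}{r} = \germ{\tilde p}{r}$ as required.
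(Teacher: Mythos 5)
Your argument is correct and follows essentially the same route as the paper's proof: both exploit that the anchors $a,b$ of the biprincipal bibundle are local diffeomorphisms (since $G$ and $H$ are \'{e}tale), conjugate an orbit-preserving map on $N$ through $b\circ a^{-1}$ to one on $M$, apply lift-completeness of $G$, and then use right $H$-principality of $a$ to promote the resulting bisection of $G$ to the needed bisection of $H$; the transport arguments for totally disconnected orbits and for the Q-chart condition are likewise the same. The only cosmetic differences are that the paper first normalizes to $f(y)=y$ and argues the orbit-preservation directly with the groupoid actions rather than via injectivity of $\ol{P}$.
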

\begin{proof}
  Because $G$ and $H$ are effective, we identify $G$ with $\Eff(G)$ and $H$ with $\Eff(H)$. Fix a biprincipal bibundle $P:G \to H$, with left and right anchors $a$ and $b$. We begin with (A), by verifying property \hyperref[def:5]{(LC)}. Assume $G$ is lift-complete, and let $f:N \dashrightarrow N$ be a smooth map preserving $H$-orbits. We must show that, locally, $f$ coincides with some element of $\Psi(H)$. Fix $y \in \dom f$. Without loss of generality, we assume $f(y) = y$; otherwise, fix an arrow $\germ{h}{y}:y \mapsto f(y)$, and consider $h^{-1}f$.

          Choose $p \in P$ with $y = b(p)$, and set $x := a(p)$. Both $a$ and $b$ are \'{e}tale maps, being bundle projections of principal \'{e}tale groupoid bundles, so we may choose a local inverse $\sigma:(M,x) \dashrightarrow (P,p)$ of $a$ such that $b\sigma :(M,x) \dashrightarrow (N,y)$ is a transition.

          The map $(b\sigma)^{-1} f (b\sigma):(M,x) \dashrightarrow (M,x)$ preserves $G$-orbits. Since $G$ is lift-complete, there is some arrow $\germ{g}{x} \in G$ such that $\germ{(b\sigma)^{-1} f (b\sigma)}{x} = \germ{g}{x}$. Let $(s_G)^{-1}$ be the section of $s_G$ mapping $x$ to $\germ{g}{x}$, and consider the (locally defined) smooth map:
          \begin{table}[h]
            \centering
            \begin{tabular}[h]{c c c c c c c}
              $M$ & $\to$ & $G$ & $\to$ & $P \fiber{a}{a} P$ & $\to$ & $P \fiber{b}{t} H$ \\
              $x'$ & $\mapsto$ & $(s_G)^{-1}(x')$ & $\mapsto$ & $((\sigma(g(x')), (s_G)^{-1}(x') \cdot \sigma(x'))$ & $\mapsto$ & $(\sigma(g(x')), \eta_{x'})$.
            \end{tabular}
          \end{table}
          
          The arrow $\eta_{x'} \in H$ is defined by the condition $(s_G)^{-1}(x') \cdot \sigma(x') = \sigma(k(x')) \cdot \eta_{x'}$. The map $x' \mapsto \eta_{x'}$ is well-defined and smooth because $a:P \to M$ is a right principal $H$-bundle. Observe that
          \begin{align*}
            &s_H(\eta_{x'}) = b(\sigma(k(x')) \cdot \eta_{x'}) = b(s_G^{-1}(x') \cdot \sigma(x')) = b(\sigma(x')) \\
            &t_H(\eta_{x'}) = b(\sigma(g(x'))).
          \end{align*}
          So then the map $(s_H)^{-1}(y') := \eta_{(b\sigma)^{-1}(y')}$ is a section of $s_H$ and
          \begin{equation*}
            t_H((s_H)^{-1}(y')) = (b\sigma) g (b\sigma)^{-1}(y') = f(y').
          \end{equation*}
          This shows that $f$ coincides with the bisection induced by $(s_H)^{-1}(y)$ near $y$. We conclude that $f$ is locally in $\Psi(H)$.
         
          Now we show that the orbits of $H$ are totally disconnected. We do not need to identify $G$ or $H$ with their effects. Suppose that $\msf{p}:\msf{U} \to N$ is a plot with image contained in an $H$-orbit. Take $r \in \msf{U}$, and set $y := \msf{p}(r)$. Let $\tau$ be a section of $b$ defined near $y$, and set $p := b(y)$, and $x := a(p)$. Then $a \tau \msf{p}:(\msf{U},r) \dashrightarrow (M,x)$ is a pointed plot with image in the $G$-orbit $G \cdot x$. Indeed, if $r'$ is near $r$, take an arrow $h:\msf{p}(r') \to y$, which exists because $\msf{p}$ has image in an $H$-orbit. Then
          \begin{equation*}
            b\tau \msf{p}(r') = \msf{p}(r') = b(p\cdot h),
          \end{equation*}
          so $\tau p(r')$ and $p\cdot h$ are in the same $b$-fiber. By principality of the $G$-action, we can find $g \in G$ with $\tau p(r') = g \cdot p \cdot h$. Then
          \begin{equation*}
            s(g) = a(p\cdot h) = a(p) \text{ and } t(g) = a(g\cdot p \cdot h) = a\tau \msf{p}(r').
          \end{equation*}
          Since $G \cdot x$ is totally disconnected, the plot $a\tau \msf{p}$ must be constant near $r$. But $a\tau$ is a transition, so it must be $\msf{p}$ that is constant near $r$. Since $r$ was arbitrary, we conclude that $\msf{p}$ is locally constant on $\msf{U}$, and thus the $H$-orbits are totally disconnected.

          Finally, we verify (B) by checking that, if $\pi_G$ is a Q-chart, so is $\pi_H$. Let $\msf{p},\ti{\msf{p}}:\msf{U} \to N$ be plots with $\pi_H \msf{p} = \pi_H \ti{\msf{p}}$. Take $r \in \msf{U}$, and suppose that $y := \msf{p}(r) = \ti{\msf{p}}(r)$. We need $\germ{\msf{p}}{r} = \germ{\ti{\msf{p}}}{r}$. Let $\tau$ be a section of $b$ near $y$, such that $a\tau$ is a transition. Then, by definition of $\ol{P}$ (Proposition \ref{prop:10}),
          \begin{equation*}
            (\pi_G a \tau) \msf{p} = (\ol{P})^{-1} \pi_H \msf{p} = ((\ol{P})^{-1} \pi_H) \ti{\msf{p}} = \pi_G a \tau \ti{\msf{p}}.
          \end{equation*}
          Since $\pi_G$ is a Q-chart, and $a\tau \msf{p}(r) = a\tau \ti{\msf{p}}(r)$, we have $\germ{a\tau \msf{p}}{r} = \germ{a\tau \ti{\msf{p}}}{r}$. But $a\tau$ is a transition, so $\germ{\msf{p}}{r} = \germ{\ti{\msf{p}}}{r}$ as required.
        \end{proof}

        \begin{definition}
          A differentiable stack is \define{lift-complete} if it is represented by some lift-complete Lie groupoid.
        \end{definition}
        As it should be, this definition is independent of the choice of representative \'{e}tale Lie groupoid.

        \subsection{Necessary lemmas}
        \label{sec:necessary-lemmas}
        
The following three lemmas are crucial to the sequel. The first concerns lifts of local subductions. It is also in \cite[Proposition 3.10]{Vill23}, but we give the proof here for completeness.

        \begin{lemma}
          \label{lem:1}
          Suppose $\pi_X:M \to X$ and $\pi_Y:N \to Y$ are quasi-\'{e}tale maps, with $M$ and $N$ manifolds, and $f:X \to Y$ is a smooth map.
          \begin{enumeratea}
          \item For every $x \in M$ and $y \in N$ such that $f(\pi_X(x)) = \pi_Y(y)$, there is some local lift $\varphi$ of $f$ taking $x$ to $y$.
          \item If $f$ is surjective, for every $y \in N$ there is some local lift mapping to $y$.
          \item If $f$ is a local subduction, any lift is a submersion.
          \item If $f$ is a local diffeomorphism, any lift is a local diffeomorphism.
          \end{enumeratea}
        \end{lemma}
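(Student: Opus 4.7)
The plan is to handle (a) and (b) using only the local-subduction clause of quasi-\'etale, and then deduce (c) and (d) by constructing a local one-sided inverse to $\varphi$ and invoking the \hyperref[def:1]{(QE)} condition.

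For (a), I would observe that $f\pi_X|_U$ is smooth, hence a plot of $Y$ pointed at $f(\pi_X(x))=\pi_Y(y)$. Since $\pi_Y$ is a local subduction, this plot lifts locally along $\pi_Y$ through $y$, yielding the desired smooth $\varphi:(M,x)\dashrightarrow(N,y)$. For (b), given $y\in N$, surjectivity of $f$ produces $\xi\in X$ with $f(\xi)=\pi_Y(y)$; picking any $x\in\pi_X^{-1}(\xi)$ and applying (a) gives a lift sending $x$ to $y$.

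For (c), the central step is the construction of a local right inverse $\tilde\beta$ to $\varphi$ at each $x\in U$, where $y:=\varphi(x)$. Take an open neighbourhood $W$ of $y$ in $N$, so that $\pi_Y|_W:(W,y)\to (Y,\pi_Y(y))$ is a pointed plot of $Y$. Since $f$ is a local subduction and $f(\pi_X(x))=\pi_Y(y)$, lift $\pi_Y|_W$ along $f$ through $\pi_X(x)$ to obtain $\beta:(W,y)\dashrightarrow(X,\pi_X(x))$ with $f\beta=\pi_Y|_W$. Since $\pi_X$ is a local subduction, lift $\beta$ further along $\pi_X$ through $x$ to obtain $\tilde\beta:(W,y)\dashrightarrow(M,x)$ with $\pi_X\tilde\beta=\beta$. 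Shrinking $W$ so that $\tilde\beta(W)\subseteq U$, the smooth map $\varphi\tilde\beta:W\to N$ fixes $y$ and satisfies
\begin{equation*}
\pi_Y(\varphi\tilde\beta)=f\pi_X\tilde\beta=f\beta=\pi_Y|_W.
\end{equation*}
By \hyperref[def:1]{(QE)} for $\pi_Y$, $\varphi\tilde\beta$ is a local diffeomorphism at $y$. Hence $d\varphi_x\circ d\tilde\beta_y$ is invertible, so $d\varphi_x$ is surjective, and $\varphi$ is a submersion at $x$; since $x\in U$ was arbitrary, $\varphi$ is a submersion.

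For (d), I would reuse $\tilde\beta$ from (c). Because $f$ is a local diffeomorphism, a smooth local inverse $f^{-1}$ exists and forces $\beta=f^{-1}\pi_Y|_W$, so
\begin{equation*}
\pi_X(\tilde\beta\varphi)=\beta\varphi=f^{-1}\pi_Y\varphi=f^{-1}f\pi_X=\pi_X
\end{equation*}
on a neighbourhood of $x$ in $U$. By \hyperref[def:1]{(QE)} for $\pi_X$, $\tilde\beta\varphi$ is a local diffeomorphism at $x$. Combined with the right inverse from (c), $\varphi$ admits local left and right inverses at $x$, so it is a local diffeomorphism. The main hurdle lies in (c): one must chain two local-subduction lifts, first along $f$ and then along $\pi_X$, and then recognise that $\varphi\tilde\beta$ is a $\pi_Y$-preserving smooth self-map of an open subset of $N$ in order to invoke \hyperref[def:1]{(QE)}. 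The other parts then fall out routinely.
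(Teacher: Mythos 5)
Your proposal is correct and follows essentially the same route as the paper: (a) and (b) by lifting $f\pi_X$ along the local subduction $\pi_Y$, and (c), (d) by producing a pointed map $\tilde\beta$ with $f\pi_X\tilde\beta = \pi_Y$ and applying condition (QE) to the $\pi_Y$-preserving map $\varphi\tilde\beta$ (resp.\ the $\pi_X$-preserving map $\tilde\beta\varphi$). The only cosmetic difference is that the paper obtains $\tilde\beta$ in one step by lifting $\pi_Y$ along the composite local subduction $f\pi_X$, whereas you factor this into successive lifts along $f$ and then $\pi_X$, which yields the same map.
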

        \begin{proof}
          We prove (a) and (b) first. Fix $y \in N$ with $\pi(y) \in f(X)$. Choose $x \in M$ such that $f(\pi_X(x)) = \pi_Y(y)$. Since $\pi_Y$ is a local subduction, we may locally lift $f \pi_X$ along $\pi_Y$ to a pointed map $\varphi:(M,x) \dashrightarrow (N,y)$. This proves (a) and (b).

          Now, for (c), assume that $f$ is a local subduction. Since $f\pi_X$ is a local subduction, we may locally lift $\pi_Y$ along $f\pi_X$ to a pointed map $s: (N,y) \dashrightarrow (M,x)$. Observe that
          \begin{equation*}
            (\pi_Y \varphi) s = (f \pi_X) s = \pi_Y,
  \end{equation*}
  so $\varphi s$ is a smooth map that preserves $\pi_Y$ fibers. Since $\pi_Y$ is quasi-\'{e}tale, this means $\varphi s$ is a local diffeomorphism. It follows that $\varphi$ is a submersion.

  For (d), if $f$ is a local diffeomorphism, then choose a local lift $s$ of $f^{-1} \pi_Y$ along $\pi_X$, where $f^{-1}$ denotes the local inverse of $f$ mapping $y$ to $x$. Then
  \begin{equation*}
   (\pi_X s) \varphi = f^{-1} (\pi_Y \varphi) = (f^{-1} f) \pi_X = \pi_X, 
 \end{equation*}
 so $s \varphi$ is a smooth map that preserves $\pi_X$ fibers. Since $\pi_X$ is quasi-\'{e}tale, this means that $s\varphi$ is a local diffeomorphism. It follows that $\varphi$ is an immersion, and we know it is a submersion from (c), hence it is a local diffeomorphism.
\end{proof}
The next lemma is about the local structure of submersions between manifolds. Here we use the \define{functional diffeology} on $C^\infty(V,U)$, for manifolds $V$ and $U$. A map
\begin{equation*}
  \msf{V} \to C^\infty(V,U), \quad v \mapsto \sigma^v
\end{equation*}
is a plot of the functional diffeology if the evaluation map
\begin{equation*}
  \msf{V} \times V \to U, \quad (v,x') \mapsto \sigma^v(x')
\end{equation*}
is smooth.

\begin{lemma}
  \label{lem:2}
  Suppose $\varphi:M \to N$ is a submersion. For every $x \in M$, there are neighbourhoods $U$ of $x$ and $V$ of $y:=\varphi(x)$, and a plot
  \begin{equation*}
    \R^{m-n} \supseteq \msf{V} \to C^\infty(V, U), \quad v \mapsto \sigma^v,
  \end{equation*}
  such that
  \begin{itemize}
  \item $\msf{V}$ is a connected neighbourhood of $0$ in $\R^{m-n}$;
  \item $\sigma^v:V \to U$ is a section of $\varphi$ for all $v$;
    \item for each $x' \in U$, there is some $v \in \msf{V}$ such that $\sigma^v(\varphi(x')) = x'$.
    \end{itemize}
    Furthermore, we may impose that $\sigma^0(y) = x$.
  \end{lemma}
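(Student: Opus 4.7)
The plan is to invoke the local form of a submersion. By the submersion theorem, given $x \in M$ with $y = \varphi(x)$, I can find coordinate charts on $M$ around $x$ and on $N$ around $y$ in which $\varphi$ is the standard projection. More precisely, there is a chart $U$ around $x$ with coordinates $(u^1,\ldots,u^m)$ centered at $x$, and a chart $V$ around $y$ with coordinates $(y^1,\ldots,y^n)$ centered at $y$, such that $U \cong V \times \msf{V}$ where $\msf{V}$ is a connected open neighborhood of $0$ in $\R^{m-n}$, and in these coordinates $\varphi$ becomes the projection $(y',v) \mapsto y'$.

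With this normal form in hand, I define $\sigma^v:V \to U$ by $\sigma^v(y') := (y',v)$ for each $v \in \msf{V}$. The three required properties are immediate. First, $\sigma^v$ is a section of $\varphi$ because $\varphi(\sigma^v(y')) = \varphi(y',v) = y'$. Second, the assignment $v \mapsto \sigma^v$ is a plot of $C^\infty(V,U)$ with the functional diffeology: the evaluation map $\msf{V}\times V \to U$, $(v,y') \mapsto \sigma^v(y') = (y',v)$, is visibly smooth (indeed, it is the inverse of the product chart). Third, given $x' \in U$ with coordinates $(y',v')$, we have $\sigma^{v'}(\varphi(x')) = \sigma^{v'}(y') = (y',v') = x'$. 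Finally, since the coordinates are centered at $x$ and $y$, we have $\sigma^0(y) = (0,0) = x$, giving the final clause.

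There is no real obstacle here; this is essentially a restatement of the local submersion theorem repackaged in the language of plots. The only mild point worth checking is that $\msf{V}$ can be taken connected, which is automatic if one shrinks it to an open ball around $0$ in the coordinate chart. The neighborhoods $U$ and $V$ may need to be shrunk accordingly so that $U = V \times \msf{V}$ as product charts, but this is standard.
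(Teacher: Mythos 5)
Your proof is correct and follows essentially the same route as the paper: both put $\varphi$ in the normal form of a projection via centred charts and define $\sigma^v$ as the ``constant-fibre-coordinate'' sections, then verify the three bullet points directly. The paper merely writes the identification $U \cong \msf{U} \times \msf{V}$ explicitly through chart maps $r_M$ and $r_N$ rather than suppressing them, so there is nothing substantive to add.
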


  \begin{proof}
      Because $\varphi$ is a submersion, we can put it in normal form. Specifically, we may fix charts
  \begin{align*}
    r_M:\msf{U} \times \msf{V} \to M, \quad r_N:\msf{U} \to N
  \end{align*}
  of $M$ and $N$ centred at $x$ and $\varphi(x)$, respectively, such that:
  \begin{itemize}
  \item $\varphi(r_M(\msf{U} \times \msf{V})) = r_N(\msf{U})$,
  \item $\msf{V}$ is connected, and
  \item $(r_N)^{-1}\varphi r_M: \msf{U} \times \msf{V} \to \msf{U}$ is the projection $\pr_1$.
  \end{itemize}
Set
  \begin{equation*}
    U := r_M(\msf{U} \times \msf{V}), \text{ and } V := r_N(\msf{U}).
  \end{equation*}
For each $v \in \msf{V}$, define the smooth map
  \begin{equation*}
    \sigma^v:V \to U, \quad y' \mapsto r_M((r_N)^{-1}(y'),v).
  \end{equation*}
  In a commutative diagram,
  \begin{equation*}
    \begin{tikzcd}[sep = large]
      \msf{U} \times \msf{V} \ar[r, "r_M", "\cong"'] \ar[d, "\pr_1"] & U \ar[d, "\varphi"] \\
      \msf{U} \ar[r, "r_N", "\cong"'] \ar[u, bend left, "{u \mapsto (u,v)}"] & V. \ar[u, bend left, "\sigma^v"]
    \end{tikzcd}
  \end{equation*}
  We claim that
  \begin{equation*}
    \sigma: \msf{V} \to C^\infty(V, U), \quad v \mapsto \sigma^v
  \end{equation*}
  is a plot of $C^\infty(V,U)$ with the desired properties. It is a plot because the evaluation map
  \begin{equation*}
    \msf{V} \times V \to U, \quad (v,x') \mapsto \sigma^v(x') = r_M((r_N)^{-1}(x'),v)
  \end{equation*}
   is smooth. As for the required properties, first, we chose $\msf{V}$ to be a connected neighbourhood of $0$ in $\R^{m-n}$. Second, the $\sigma_v$ are sections of $\varphi$:
  \begin{equation*}
    \varphi \sigma^v(x') = \varphi(r_M((r_N)^{-1}(x'),v)) = r_N \pr_1((r_N)^{-1}(x'),v) = x'.
  \end{equation*}
  Third, for each $x' \in U$, if we write $x' = r(u,v)$, then $v$ is the required element of $\msf{V}$, since
  \begin{equation*}
    \sigma^v(\varphi(r_M(u,v))) = \sigma^v(r_N(u)) = r_M(u,v) = x'.
  \end{equation*}
  Finally, as we chose centred charts, $x = r(0,0)$ and $\varphi(x) = r_N(0)$, so $\sigma^0(\varphi(x)) = x$.
  \end{proof}
This refined description of submersions allows the last lemma for this section, concerning the possible lifts of a surjective local subduction between quotient spaces.
\begin{lemma}
  \label{lem:3}
  Let $H$ be a lift-complete Lie groupoid, and suppose $\varphi, \ti{\varphi}:M \to N$ both descend to the same map to $N/H$. If
  \begin{description}
  \item[(A)] $\varphi$ is a submersion, or
  \item[(B)] $\pi:N \to N/H$ is a Q-chart (no assumption on $\varphi$),
  \end{description}
  then for each $x \in M$, there is some transition $T^x := T(\ti{\varphi}, \varphi, x)$ in $\Psi(H)$ such that $\germ{T^x\varphi}{x} = \germ{\ti{\varphi}}{x}$, and this condition uniquely determines $\germ{T^x}{\varphi(x)}$.
% In a diagram,
%   \begin{equation*}
%     \begin{tikzcd}
%       (M, x) \ar[r, "\varphi"] \ar[dr, "{\ti{\varphi}}"] & (N, \varphi(x)) \ar[d, dashed, "{T(\ti{\varphi}, \varphi, x)}"]\\
%       & (N, \ti{\varphi}(x)).
%     \end{tikzcd}
%   \end{equation*}
  Furthermore, the map
  \begin{equation}
    \label{eq:4}
    M \dashrightarrow \Eff(H), \quad x' \mapsto \germ{T^{x'}}{\varphi(x')} 
  \end{equation}
is smooth.
\end{lemma}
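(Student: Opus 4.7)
The plan is to construct $T^x$ explicitly in each case, verify the germ equality, derive uniqueness from the case-specific properties, and then deduce smoothness by showing that a single $T^x$ serves for all nearby points.

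For existence, let $y := \varphi(x)$ and $\ti y := \ti\varphi(x)$, which lie in the same $H$-orbit since $\pi y = \pi \ti y$. In case (B), using Lemma \ref{lem:8} and \'{e}tale-ness of $H$, I pick a local bisection of $H$ whose value at $y$ is an arrow $y \mapsto \ti y$, producing $T_0 \in \Psi(H)$ with $T_0(y) = \ti y$. Both $T_0^{-1}\ti\varphi$ and $\varphi$ are smooth maps near $x$ with $\pi(T_0^{-1}\ti\varphi) = \pi\ti\varphi = \pi\varphi$, and they agree at $x$; the Q-chart property (via Remark \ref{rem:8}) forces $\germ{T_0^{-1}\ti\varphi}{x} = \germ{\varphi}{x}$, so I take $T^x := T_0$. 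In case (A), I apply Lemma \ref{lem:2} at $x$ to get neighbourhoods $U \ni x$, $V \ni y$ and a connected parameter space $\msf{V} \ni 0$ with a plot $v \mapsto \sigma^v$ of local sections of $\varphi$ satisfying $\sigma^0(y) = x$, and define $T^x := \ti\varphi\sigma^0:V \dashrightarrow N$. Since $\pi T^x = \pi\varphi\sigma^0 = \pi|_V$, property \hyperref[def:5]{(LC)} gives $\germ{T^x}{y'} \in \Psi(H)$ for each $y' \in V$; after shrinking $V$, the map $T^x$ is a transition lying in $\Psi(H)$ with $T^x(y) = \ti y$.

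To verify the germ equality in case (A), I will show the stronger statement $T^x\varphi = \ti\varphi$ on $U$. For each fixed $y' \in V$, the plot $v \mapsto \ti\varphi\sigma^v(y')$ has image in the $H$-orbit of $y'$, since $\pi(\ti\varphi\sigma^v(y')) = \pi(\varphi\sigma^v(y')) = \pi(y')$. Because $H$-orbits are totally disconnected (part of lift-completeness) and $\msf{V}$ is connected, the plot is constant: $\ti\varphi\sigma^v(y') = \ti\varphi\sigma^0(y') = T^x(y')$. Using the property from Lemma \ref{lem:2} that every $x' \in U$ is $\sigma^{v(x')}(\varphi(x'))$ for some $v(x') \in \msf{V}$, I obtain $\ti\varphi(x') = \ti\varphi\sigma^{v(x')}(\varphi(x')) = T^x(\varphi(x'))$, so $\ti\varphi = T^x\varphi$ on $U$.

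For uniqueness, suppose $T, T' \in \Psi(H)$ both satisfy the germ equation at $x$. Then $T^{-1}T' \in \Psi(H)$ fixes $y$ and $(T^{-1}T')\varphi = \varphi$ near $x$. In case (A), since $\varphi$ is a submersion its image contains an open neighbourhood of $y$, forcing $T^{-1}T' = \id$ on such a neighbourhood, so $\germ{T}{y} = \germ{T'}{y}$ in $\Eff(H)$. In case (B), I apply the Q-chart property to $T$ and $T'$ viewed as maps $V \to N$: both satisfy $\pi T = \pi T' = \pi|_V$ and $T(y) = T'(y)$, so $\germ{T}{y} = \germ{T'}{y}$. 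For smoothness of \eqref{eq:4}, I observe that in both cases the chosen $T^x$ satisfies $T^x\varphi = \ti\varphi$ on a neighbourhood $U$ of $x$ (shown above in (A); immediate from the germ equality at $x$ in (B)), so $T^x$ also serves as a valid transition at every $x' \in U$. Uniqueness then gives $\germ{T^{x'}}{\varphi(x')} = \germ{T^x}{\varphi(x')}$, which is the composition of $\varphi$ with the chart $y' \mapsto \germ{T^x}{y'}$ of $\Eff(H)$, hence smooth. The most delicate step is the construction in case (A): without a direct lifting property on $N$ we must produce $T^x$ from the external data of local sections of $\varphi$ and then propagate pointwise equality to a full neighbourhood, which crucially combines the totally disconnected orbit hypothesis with the connectedness of $\msf{V}$ supplied by Lemma \ref{lem:2}.
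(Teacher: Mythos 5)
Your proof is correct and follows essentially the same route as the paper: Lemma \ref{lem:2} plus the totally-disconnected-orbits/connected-parameter argument for (A), condition \hyperref[def:4]{(Q)} for (B), openness of the submersion $\varphi$ for uniqueness, and the observation that one $T^x$ serves on a whole neighbourhood for smoothness of \eqref{eq:4}. The only (harmless) deviations are that in case (B) you produce $T_0$ from a bisection through an arrow $y\mapsto\ti{y}$ via Lemma \ref{lem:8} rather than lifting $\pi$ along itself as a local subduction, and you prove uniqueness among elements of $\Psi(H)$ by applying \hyperref[def:4]{(Q)} to $T,T'$ directly, which is all the later applications require.
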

\begin{proof}
For (A), assume that $\varphi$ is a submersion. Fix $x \in M$, and take the smooth family of sections $\{\sigma^v:V \to U\}_{v \in \msf{V}}$ from Lemma \ref{lem:2}. The map $\ti{\varphi} \sigma^v$ preserves $H$-orbits:
  \begin{equation*}
    \pi \ti{\varphi} \sigma^v = \pi \varphi \sigma^v = \pi.
  \end{equation*}
  Now, for each $y' \in V$, consider the set
  \begin{equation*}
    \{\ti{\varphi}\sigma^v(y') \mid v \in \msf{V}\} \subseteq N.
  \end{equation*}
  As the $\ti{\varphi}\sigma^v$ preserve $H$-orbits, this is a subset of the $H$-orbit of $x'$. By our assumption that $H$ is lift-complete, this set is totally disconnected. On the other hand, this set is also the image of the smooth map
  \begin{equation*}
    \msf{V} \to N, \quad v \mapsto \ti{\varphi}\sigma^v(y').
  \end{equation*}
  We chose $\msf{V}$ to be connected, so this image must be connected, and since it is also totally disconnected, it is a singleton. In other words, $\ti{\varphi}\sigma^v(y')$ does not depend on $v$.

 We define $T^x := \ti{\varphi}\sigma^0$. By the above discussion, this is also $\ti{\varphi}\sigma^v$ for any $v \in \msf{V}$. For arbitrary $x' = r(u,v) \in U$, we have
  \begin{equation*}
    T^x \varphi(x') = \ti{\varphi}\sigma^0\varphi(x') = \ti{\varphi}\sigma^v\varphi(x') =\ti{\varphi}(x'),
  \end{equation*}
  so $T^x\varphi$ and $\ti{\varphi}$ agree on $U$. Since $H$ is lift-complete, and $T^x$ preserves $H$-orbits, we may shrink its domain so that $T^x \in \Psi(H)$. This proves the first assertion.

  Now we check that $\germ{T^x}{\varphi(x)}$ is uniquely determined. Suppose that $h$ is a transition of $N$ and $\germ{h\varphi}{x} = \germ{\ti{\varphi}}{x}$. Then $T^x\varphi$ and $h\varphi$ agree in a neighbourhood of $x$. But $\varphi$ is a submersion, hence an open map, so $T^x$ and $h$ agree on a neighbourhood of $\varphi(x)$. In other words, $\germ{T^x}{\varphi(x)} = \germ{h}{\varphi(x)}$.

  This lets us show that \eqref{eq:4} is smooth. Fix $x \in M$, and a neighbourhood $U$ such that $T^x\varphi|_U = \ti{\varphi}|_U$. Then for any $x' \in U$, we must have
  \begin{equation*}
    \germ{T^x\varphi}{x'} = \germ{\ti{\varphi}}{x'} = \germ{T^{x'}\varphi}{x'},
  \end{equation*}
and the previous paragraph gives $\germ{T^x}{\varphi(x')} = \germ{T^{x'}}{\varphi(x')}$. So the map in \eqref{eq:4} is, near $x$, $x' \mapsto \germ{T^x}{\varphi(x')}$, which is smooth.
  
Now we prove (B). Take $\varphi$ arbitrary, but assume that $\pi:N \to N/H$ is a Q-chart. Fix $x \in M$. Since $\pi$ is a local subduction, and $\pi(\varphi(x)) = \pi(\ti{\varphi}(x))$, lift $\pi$ along itself to a pointed map $T^x:(N, \varphi(x)) \dashrightarrow (N, \ti{\varphi}(x))$. This preserves $H$-orbits, and thus restricts to a transition in $\Psi(H)$ by the assumption that $H$ is lift-complete.  Both $T^x\varphi$ and $\ti{\varphi}$ agree at $x$, and project to the same map along $\pi$, so by condition \hyperref[def:4]{(Q)}, $\germ{T^x\varphi}{x} = \germ{\ti{\varphi}}{x}$. Therefore, we have found $T^x$.

To check $\germ{T^x}{\varphi(x)}$ is uniquely determined, suppose that $h$ is a transition of $N$ and $\germ{h\varphi}{x} = \germ{\ti{\varphi}}{x}$. Then $h^{-1}T$ is a smooth map that preserves $H$-orbits and fixes $\varphi(x)$, so by \hyperref[def:4]{(Q)} its germ at $\varphi(x)$ must coincide with $\germ{\id_N}{\varphi(x)}$. Similarly, $\germ{T^xh^{-1}}{\ti{\varphi}(x)} = \germ{\id_N}{\ti{\varphi}(x)}$. This proves that $\germ{T^x}{\varphi(x)} = \germ{h}{\varphi(x)}$. Finally, the map \eqref{eq:4} is smooth by the same reasoning as in (A).
\end{proof}

\subsection{Lift-complete pseudogroups and PDEs}
\label{sec:lift-compl-pseud}

In this subsection, we show that pseudogroups generated by countable subgroups of
\begin{itemize}
\item affine transformations of a Cartesian space, or
\item isometries of a Riemannian manifold
\end{itemize}
are lift-complete. Despite the \emph{prima facie} difference between these to examples, they are lift-complete for the same reason. This reason is best stated in the language of jets and partial differential equations (PDEs). Our main source if \cite{Olv93}.

Given a smooth submersion $p:P \to M$, we declare two sections defined near $x \in M$ to have the same $k$-jet at $x$ if their $k$-th order Taylor polynomials agree at $x$ in some, which implies any, local coordinates. We denote the equivalence class of $\sigma$ under this relation by $j_x^k\sigma$. The collection of all jets
\begin{equation*}
  J^kP := \{j_x^k\sigma \mid \sigma \text{ is a section of }p, \ x \in \dom \sigma\}
\end{equation*}
admits a canonical structure of a smooth finite-dimensional manifold, for which the projection $s: j_x^k\sigma \mapsto x$ is a submersion. A \define{PDE of order} $k$ is a subset $R \subseteq J^kP$, with $s(R) = M$. A \define{(local) solution} to a PDE is a local section $\sigma$ of $p$ for which $j^k_x\sigma \in R$ for each $x \in \dom \sigma$. If $P := M \times M \to M$ is the trivial bundle, we identify local sections of $p$ with locally defined smooth maps on $M$. Thus, given a pseudogroup $\Psi$ on $M$, we may form the subset $J^k\Psi \subseteq J^k(M \times M)$ of $k$-jets of elements of $\Psi$. This is not a submanifold in general.

In pursuit of our examples, we require some definitions.

\begin{definition}
  \label{def:6}
  A pseudogroup $\Psi$ is \define{complete} if, for every $x$ and $y$ in $M$, there are open neighbourhoods $V_x$ of $x$ and $V_y$ of $y$ such that, whenever $\psi \in \Psi$ maps some $x' \in V_x$ into $V_y$, there is some $\psi_{\ext} \in \Psi$ defined on all of $V_x$ for which $\germ{\sigma}{x'} = \germ{\sigma_{\ext}}{x'}$.
\end{definition}
This is the definition used by Haefliger, for instance see \cite{Hae85}. Beware that $\psi_\ext$ is not necessarily an extension of $\psi$. It is, however, an extension of $\psi|_U$ for some neighbourhood $U$ of $x'$, and this motivates our notation. We could similarly define a ``complete'' PDE, but this is not necessary. Any pseudogroup generated by a group of diffeomorphisms is complete.

We introduce the following class of PDEs.
\begin{definition}
  \label{def:7}
We call a PDE $R \subseteq J^kP$ \define{quasi-analytic} if, for all solutions $\sigma$ and $\sigma'$, and all $x \in M$, whenever $\germ{\sigma}{x} = \germ{\sigma'}{x}$, we have $\sigma = \sigma'$ on the connected component of $\dom \sigma \cap \dom \sigma'$ containing $x$.
\end{definition}
If $R = J^k\Psi$ is quasi-analytic, then we call $\Psi$ quasi-analytic. Pseudogroups of affine transformations of a Cartesian space, and pseudogroups of local isometries of a Riemannian manifold, are quasi-analytic.
\begin{remark}
An \emph{a priori} different definition of quasi-analytic pseudogroups (cf.\ \cite{AC09}, who attribute the definition to Haefliger) asks that for all $\psi \in \Psi$ and $x \in \dom \psi$, whenever there is some open subset $U \subseteq \dom \psi$ such that $\psi|_U = \id|_U$, and $x \in \ol{U}$ (where the closure is taken relative to $\dom \psi$), there exists some open neighbourhood $U' \subseteq \dom \psi$ of $x$ such that $\psi|_{U'} = \id|_{U'}$. Using point-set topology, one may show that this definition agrees with ours.
\end{remark}

\begin{example}
Quasi-analytic pseudogroups arise from $(G,X)$ structures. These begin with a manifold $X$, and a Lie group $G$ acting on $X$ ``analytically,'' meaning that whenever $\germ{g}{x} = \germ{g'}{x}$ for some $x \in X$, we have $g = g'$. A $(G,X)$-structure on a manifold $M$ is an atlas of $X$-valued charts whose transition functions lie in the pseudogroup generated by $G$. The pseudogroup generated by $G$, or generated by the transition functions, is quasi-analytic.
\end{example}

We may now state sufficient conditions for a pseudogroup to be lift-complete.
\begin{proposition}
  \label{prop:11}
  Let $\Psi$ be a complete and countably-generated pseudogroup on $M$. Suppose that for some $k \geq 1$, $\Psi$ consists of solutions to a $k$-th order quasi-analytic PDE $R$, such that $R$ is a closed subset of $J^k(M \times M)$. Then $\Psi$ is lift-complete.
\end{proposition}

\begin{proof}
The orbits of $\Psi$ are countable subsets of $M$, hence totally disconnected. It only remains to verify condition \hyperref[def:5]{(LC)}. Take a smooth map $f:U \to M$ that preserves the $\Psi$ orbits, and let $x_0 \in U$. Fix a countable collection $\{\psi_i\} \subseteq \Psi$ such that $\psi_i(x_0) = y_0$ and, for each $\psi \in \Psi$, there is some $i$ with $\germ{\psi}{x_0} = \germ{\psi_i}{x_0}$. Such a collection exists because $\Psi$ is countably-generated.

     Apply the assumption that $\Psi$ is complete to the points $x_0$ and $y_0 := f(x_0)$, to yield the distinguished neighbourhoods $V_{x_0}$ and $V_{y_0}$. We may shrink $V_{x_0}$ so that it is a connected subset of $U$ such that $f(V_{x_0}) \subseteq V_{y_0}$. We restrict the domain of $f$ to $V_{x_0}$, and to keep notation simple, we assume $U = V_{x_0}$. Completeness of $\Psi$ gives extensions $(\psi_i)_{\ext}:U \to M$ of (the germs of) $\psi_i$ based at $x_0$.

     Let
      \begin{equation*}
        \Delta_i := \{x \in U \mid f(x) = (\psi_i)_{\ext}(x)\}.
      \end{equation*}
       This is a closed subset of $U$, because $(\psi_i)_{\ext}$ are defined on $U$. There are countably many $\Delta_i$. We claim that $\bigcup \Delta_i = U$. The inclusion ``$\subseteq$'' is clear, so assume that $x \in U$. Since $f$ preserves $\ti{\Psi}$-orbits, there is some $\psi \in \ti{\Psi}$ with $f(x) = \psi(x) \in V_{y_0}$. By completeness, we have an extension $\psi_{\ext}:U \to M$ of (the germ of) $\psi$ based at $x$. By choice of the collection $\{\psi_i\}$, we have $\germ{\psi_{\ext}}{x_0} = \germ{(\psi_i)_{\ext}}{x_0}$ for some $i$. But both $\psi_{\ext}$ and $(\psi_i)_{\ext}$ are solutions to $R$, which is quasi-analytic, so $\psi_\ext = (\psi_i)_\ext$ on the connected component of $x_0$ in $\dom \psi_{\ext} \cap \dom (\psi_i)_{\ext}$. This intersection is simply $U$, which is connected. Therefore $x \in \Delta_i$, and $U \subseteq \bigcup_i \Delta_i$. 

      By the Baire category theorem, $V := \bigcup \Delta_i^\circ$ is an open dense subset of $U$.  Now consider the map
      \begin{equation*}
        j^kf:U \to J^k(M \times M), \quad x \mapsto j^k_xf.
      \end{equation*}
      This is smooth, and maps each $\Delta_i^\circ$ into $R$. Thus $j^kf(V) \subseteq R$. But $V$ is dense in $U$, and we assume that $R$ is closed in $J^k(M \times M)$, so therefore
      \begin{equation*}
       R \supset \ol{j^kf(V)} = \ol{j^kf(\ol{V})} = \ol{j^kf(U)} \supset j^kf(U).
     \end{equation*}
Thus $f$ is a solution to $R$. By density of $V$, there is some $i$ and some point $x$ for which $\germ{f}{x} = \germ{(\psi_i)_{\ext}}{x}$. But $R$ is quasi-analytic, so the solutions $f$ and $(\psi_i)_{\ext}$ must agree on the component of $\dom f \cap \dom (\psi_i)_{\ext}$ about $x$. As this intersection is all of $U$, which is connected, we conclude that $\germ{f}{x_0} = \germ{\psi_i}{x_0}$.  Since $x_0$ was arbitrary, we have shown that $\ti{\Psi}$ is lift-complete. 
\end{proof}

\begin{corollary}
  \label{cor:1}
  Suppose that $\Psi$ is a complete and countably-generated sub-pseudogroup of either 
  \begin{itemize}
  \item $\Aff(\R^n)$, the group of affine transformations of $\R^n$, or
  \item $\Iso(M,g)$, the group of isometries of a Riemannian manifold $(M,g)$.
  \end{itemize}
  Then $\Psi$ is lift-complete.
\end{corollary}
\begin{proof}
  In either case, $\Psi$ is complete and countably generated. To apply Proposition \ref{prop:11}, we require the PDE $R$.
  \begin{itemize}
  \item For the case of affine transformations, take $k = 2$ and
    \begin{equation*}
      R := \{j_x^k\sigma \mid \sigma(x) = Ax+b \text{ for } A \in \operatorname{Mat}(n,\R), \ b \in \R^n\}.
    \end{equation*}
    In other words, $R$ is the PDE requiring the second derivative of $\sigma$ to vanish identically. This is a closed condition, so $R$ is closed in $J^2(M \times M)$. The PDE $R$ is also quasi-analytic, because affine transformations are analytic functions of $\R^n$.
  \item For the case of isometries, take $k = 1$ and $R := J^1\operatorname{Iso}(M,g)$. This is closed in $J^1(M \times M)$, because to preserve the metric is a closed condition on the 1-jet. It is also quasi-analytic, since local isometries are determined by their germs at a point.
  \end{itemize}
\end{proof}

\begin{example}
  \label{ex:3}
  A diffeological quasifold, defined in \cite{IZP20,KM22} is a second-countable diffeological space that is locally diffeomorphic to the quotients $\R^n/\Gamma$, where $n$ is fixed and $\Gamma$ is a countable subgroup of $\Aff(\R^n)$ that may vary. By Corollary \ref{cor:1}, each $\R^n/\Gamma$ is quasi-\'{e}tale, so diffeological quasifolds are quasi-\'{e}tale. It was shown in both \cite{IZP20} and \cite{KM22} that quasifolds are quasi-\'{e}tale, and the arguments in Corollary \ref{cor:1} are similar to both of those proofs.
\end{example}

More generally, the effective quasifold groupoids from \cite{KM22} are lift-complete. A quasifold groupoid is a Lie groupoid $G$ with Hausdorff arrow space, such that for every $x \in M$, there is some isomorphism of Lie groupoids $\varphi:(\Gamma \ltimes \R^n)|_{\msf{V}} \to G|_U$, where $U$ is a neighbourhood of $x$, $\msf{V}$ is an open subset of $\R^n$, and $\Gamma$ is a countable group acting affinely on $\R^n$.

\begin{proposition}
  \label{prop:3}
  Effective quasifold groupoids are lift-complete.
\end{proposition}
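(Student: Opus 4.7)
The approach is to reduce the question to a local calculation on action groupoids and then cite the rigidity of orbit-preserving maps for affine actions of countable groups, as recalled in Example \ref{ex:3}.

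First, I would observe that the four conditions making up lift-completeness (\'{e}tale, effective, totally disconnected orbits, and (LC)) are all local on the base, and that (LC) is moreover invariant under isomorphism of Lie groupoids, because such an isomorphism identifies the associated pseudogroups $\Psi(G)$. Hence, by the definition of a quasifold groupoid, it suffices to verify the four conditions for an action groupoid $\Gamma \ltimes \R^n$, where $\Gamma$ is a countable group acting affinely on $\R^n$ (and with the resulting groupoid assumed effective, as we are restricting to the effective case).

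Next, the first three conditions are immediate: since $\Gamma$ is countable (hence discrete), $\Gamma \ltimes \R^n \rra \R^n$ is \'{e}tale; effectivity is an assumption; and each orbit $\Gamma \cdot x$ is countable, hence totally disconnected. This reduces the proof to verifying (LC).

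For (LC), let $f : U \to \R^n$ be a smooth map with $\pi f = \pi|_U$, and fix $x \in U$. Shrinking $U$ to a connected open neighbourhood of $x$, Example \ref{ex:3} (i.e.\ \cite[Lemma 2.14]{KM22}) produces a single $\gamma \in \Gamma$ with $f(y) = \gamma \cdot y$ for all $y$ in this neighbourhood. The constant section $y \mapsto (y,\gamma)$ of $s$ is then a local bisection $\sigma$ of $\Gamma \ltimes \R^n$ with $t\sigma = \gamma \cdot $, so $\germ{f}{x} = \germ{t\sigma}{x}$ has a representative in $\Psi(\Gamma \ltimes \R^n)$, as required. The main (and only) substantive step is the appeal to the rigidity lemma; everything else is bookkeeping about what is preserved under the local isomorphisms provided by the definition of quasifold groupoid.
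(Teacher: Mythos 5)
Your proposal is correct and follows essentially the same route as the paper: reduce to the local model $(\Gamma \ltimes \R^n)|_{\msf{V}}$ via a quasifold groupoid chart and invoke the rigidity of orbit-preserving maps recalled in Example \ref{ex:3}. The one step concealed in your appeal to ``locality'' of (LC) is the reduction to the case $f(x)=x$ --- when $f(x)$ lies outside the chart containing $x$ you must first compose $f$ with a transition $t\sigma \in \Psi(G)$ obtained from a local bisection through an arrow $x \mapsto f(x)$ (Lemma \ref{lem:8}), exactly as the paper does explicitly; once that is done, the restriction of a $G$-orbit to a chart domain $U$ is precisely a $G|_U$-orbit, so both (LC) and total disconnectedness of orbits do localize as you claim.
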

\begin{proof}
This is similar to \cite[Lemma 5.2]{KM22}. Fix an effective quasifold groupoid $G$. We first check condition \hyperref[def:5]{(LC)}. Let $f:U \to M$ be a smooth function preserving $G$ orbits. Because $G$ is effective, we can identify $G$ with $\Eff(G)$. Let $x \in U$, and assume that $f(x) = x$; otherwise, take an arrow $\germ{g}{x}:x \mapsto f(x)$ and consider $g^{-1}f$.  Shrinking $U$ if necessary, we may assume that it is the domain of some quasifold groupoid chart $\varphi:(\Gamma \ltimes \R^n)|_{\msf{V}} \to G|_U$, and that $f:U \to U$. The map $\varphi^{-1} f \varphi$ is a transition of $\msf{V}$, and it preserves $\Gamma$-orbits. Therefore, by Corollary \ref{cor:1} there is some $\gamma \in \Gamma$ such that $\germ{\varphi^{-1} f \varphi }{\varphi^{-1}(x)} = \germ{\gamma}{\varphi^{-1}(x)}$. Then the map
  \begin{equation*}
    \sigma:U \dashrightarrow G|_U, \quad x' \mapsto \varphi(\gamma, \varphi^{-1}(x'))
  \end{equation*}
  is a section of $s$, and $t\sigma = f$. So then $f$, restricted to a neighbourhood of $x$, is in $\Psi(G)$. Since $x$ was arbitrary, we conclude that $f$ is locally in $\Psi(G)$, as desired.

  Now we show that $G$ has totally disconnected orbits. Let $p:\msf{U} \to M$ be a plot with image in a $G$-orbit. Take $r \in \msf{U}$ and denote $x := p(r)$. Let $\varphi:(\Gamma \ltimes \R^n)|_{\msf{V}} \to G|_U$ be a quasifold groupoid chart near $x$. Then $\varphi^{-1} p:\msf{U} \dashrightarrow \msf{V}$ maps into a $\Gamma$-orbit of $\R^n$, which is a countable set because $\Gamma$ is countable. This means $\varphi^{-1} p$ is locally constant near $r$, and $\varphi^{-1}$ is a diffeomorphism, so $p$ is locally constant near $r$. Since $r$ was arbitrary, we conclude that $p$ is locally constant, hence the $G$-orbits are totally disconnected. 
\end{proof}

In Section \ref{sec:exampl-riem-foli}, we will use Corollary \ref{cor:1} to prove that the holonomy pseudogroup of a Riemannian foliation is lift-complete, and thus its leaf space is quasi-\'{e}tale. 

\section{An equivalence of categories}
\label{sec:an-equiv-categ}
Here prove our main result, Theorem \ref{thm:1}. First, we address essential surjectivity.

\begin{proposition}
  \label{prop:4}
  Suppose that $X$ is a second-countable quasi-\'{e}tale diffeological space.
  \begin{description}
  \item[(A)] There is some lift-complete Lie groupoid $\Gamma(X) \rra \cl{M}$ such that $\cl{M}/\Gamma(X)$ is diffeomorphic to $X$.
  \item[(B)]  If $X$ is also a Q-manifold, then the quotient $\cl{M} \to \cl{M} / \Gamma(X)$ is a Q-chart.
  \end{description}

\end{proposition}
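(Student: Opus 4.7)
The plan is to realize $X$ as the quotient of a manifold $\cl{M}$ built from a countable covering of $X$ by quasi-\'{e}tale maps, and take $\Gamma(X)$ to be the germ groupoid of the pseudogroup of transitions of $\cl{M}$ that preserve the projection $\pi:\cl{M} \to X$.

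First I would construct $\cl{M}$ and $\pi$. By the definition of quasi-\'{e}tale diffeological space, each $x \in X$ lies in the image of some quasi-\'{e}tale map $\pi_x:M_x \to X$, and each such image is D-open because any plot of $X$ meeting it admits a local lift along $\pi_x$. Using second-countability of the D-topology, I extract a countable subcover $\{\pi_i:M_i \to X\}_{i \in \bb{N}}$ and set
\[
  \cl{M} := \bigsqcup_i M_i, \qquad \pi := \bigsqcup_i \pi_i \colon \cl{M} \to X.
\]
Then $\cl{M}$ is a second-countable manifold, $\pi$ is a local subduction with totally disconnected fibers, and property (QE) passes from the pieces to $\pi$: if $f:U \to \cl{M}$ satisfies $\pi f = \pi|_U$, then near any point of $U$ the map $f$ moves between some components $M_i$ and $M_j$, and Lemma \ref{lem:1}(d) applied to $\id_X$ and the quasi-\'{e}tale maps $\pi_i, \pi_j$ shows $f$ is a local diffeomorphism. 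Thus $\pi$ is quasi-\'{e}tale. For case (B), choose instead each $\pi_i$ to be a Q-chart (possible by the Q-manifold hypothesis); since plots into $\cl{M}$ locally factor through a single $M_i$, $\pi$ inherits the Q-chart property.

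Next I would define the groupoid. Let $\Psi$ be the pseudogroup on $\cl{M}$ of all transitions $\psi$ with $\pi\psi = \pi|_{\dom\psi}$, and set $\Gamma(X) := \Gamma(\Psi)$. By construction $\Gamma(X)$ is \'{e}tale, and effective because it is a germ groupoid. For any $x,y \in \cl{M}$ with $\pi(x) = \pi(y)$, Lemma \ref{lem:1}(a,d) produces a local diffeomorphism $\varphi:(\cl{M},x)\dashrightarrow(\cl{M},y)$ with $\pi\varphi = \pi$, which restricts to an element of $\Psi$; hence the $\Gamma(X)$-orbits are exactly the $\pi$-fibers, and these are totally disconnected. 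Condition \hyperref[def:5]{(LC)} follows because any smooth map $f:U \to \cl{M}$ preserving $\Gamma(X)$-orbits satisfies $\pi f = \pi|_U$, is therefore a local diffeomorphism by (QE), and so locally lies in $\Psi$. This makes $\Gamma(X)$ lift-complete.

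Finally, the orbit map $\pi_\Gamma:\cl{M} \to \cl{M}/\Gamma(X)$ descends to a bijection $\bar\pi:\cl{M}/\Gamma(X) \to X$ because orbits equal $\pi$-fibers; $\bar\pi$ is smooth since $\pi = \bar\pi\,\pi_\Gamma$, and its inverse is smooth because any plot of $X$ lifts locally along the subduction $\pi$ and descends via $\pi_\Gamma$. Hence $\bar\pi$ is a diffeomorphism, which proves (A), and under this identification $\pi_\Gamma$ corresponds to $\pi$, a Q-chart in case (B), which proves (B). The main obstacle is ensuring the local conditions (QE) and (Q) pass from the pieces $\pi_i$ to the single map $\pi$; both reductions are handled by the local character of the definitions together with Lemma \ref{lem:1}(d).
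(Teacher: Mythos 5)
Your proposal is correct and follows essentially the same route as the paper: the same disjoint union $\cl{M} = \bigsqcup_i M_i$ obtained from a countable cover by quasi-\'{e}tale charts, the same pseudogroup $\Psi$ of $\pi$-compatible transitions, the same germ groupoid, and the same descended bijection to $X$. The only (harmless) deviation is that you verify condition (LC) directly from the property (QE) of the glued map $\pi$, whereas the paper routes that step through Lemma \hyperref[lem:3]{\ref{lem:3} (A)}; your version is, if anything, slightly more direct.
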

\begin{proof}
  We begin with (A), after which (B) is immediate. Let $\{\pi_i:M_i \to X\}$ be a collection of quasi-\'{e}tale maps into $X$, whose images cover $X$. Let $\cl{M} := \bigsqcup_i M_i$. We assume that $X$ is second-countable, so we can assume our collection of charts is countable, and thus $\cl{M}$ is a manifold. Let
  \begin{equation*}
    \Psi := \{\text{transitions }\varphi_{ji}:M_i \dashrightarrow M_j \mid \pi_j \circ \varphi_{ji} = \pi_i|_{\dom \varphi}\}.
  \end{equation*}
  This is a pseudogroup on $\cl{M}$, and we set $\Gamma(X) := \Gamma(\Psi) \rra \cl{M}$. Being a germ groupoid, it is \'{e}tale and effective. We verify condition \hyperref[def:5]{(LC)}. Let $f:U \to \cl{M}$ be a smooth map that preserves $\Psi(\Gamma(X)) = \Psi$-orbits. We must show $f$ is locally in $\Psi$. Fix $x \in U$, set $y := f(x)$, and assume that $x \in M_i$ and $y \in M_j$. Since $f$ is smooth, we may take neighbourhoods $U_i$ of $x$ and $U_j$ of $y$ such that $f$ restricts to a map $f:(U_i,x) \to (U_j,y)$.

  Fix $\varphi_{ji}:U_i \dashrightarrow U_j$ from $\Psi$ such that $y = \varphi_{ji}(x)$, which is possible because $f$ preserves $\Psi$ orbits. Take $T(f, \varphi_{ji}, x) \in \Psi$ as given by Lemma \hyperref[lem:3]{\ref{lem:3} (A)}. Then $\germ{f}{x} = \germ{T(f, \varphi_{ji}, x)^{-1}\varphi_{ji}}{x}$, and thus $f$ is locally in $\Psi$.

  Now we show that the fibers of $\pi:\cl{M} \to \cl{M}/\Gamma(X)$ are totally disconnected. Suppose $p:\msf{U} \to \cl{M}$ is a plot with image in a $\Psi$-orbit. Let $r \in \msf{U}$. We may shrink $\msf{U}$ so that $p$ restricts to a plot $p: \msf{U} \to M_i$. Then the image of $p$ is contained in the fiber of $\pi_i$, which is totally disconnected because $\pi_i$ is quasi-\'{e}tale. Therefore $p$ is constant near $r$, and since $r$ was arbitrary, $p$ is locally constant on $\msf{U}$. This means the $\Psi$-orbits are totally disconnected.

  Finally, we give a diffeomorphism $f:\cl{M}/\Gamma(X) \to X$. This will be the unique map $f$ completing the diagram
  \begin{equation}
    \label{eq:5}
    \begin{tikzcd}
      & \cl{M} \ar[dl, "\pi"'] \ar[dr, "\bigsqcup_i \pi_i"] & \\
      \cl{M}/\Gamma(X) \ar[rr, "f"] & & X.
    \end{tikzcd}
  \end{equation}
  Such a map, if it exists, is a surjective smooth local subduction because both $\pi$ and $\bigsqcup_i \pi_i$ are surjective local subductions. If $f$ is also injective, then it is a diffeomorphism (its inverse is smooth for the same reason $f$ is smooth). We must define $f$ by
  \begin{equation*}
    f(\pi(x)) := \pi_i(x) \text{ where } x \in M_i.
  \end{equation*}
 To see this is well-defined, suppose that $\pi(x) = \pi(x')$, and $x' \in M_j$. Then there is a transition $\varphi_{ji}:(M_i, x) \dashrightarrow (M_j, x')$ in $\Psi$. By the definition of $\Psi$, we have
  \begin{equation*}
    f(\pi(x')) = \pi_j(x') = \pi_j\varphi_{ji}(x) = \pi_i(x) = f(\pi(x)),
  \end{equation*}
  so $f$ is well-defined. To see $f$ is injective, suppose that $f(\pi(x)) = f(\pi(x'))$. Then representatives $x \in M_i$ and $x' \in M_j$ satisfy $\pi_i(x) = \pi_j(x')$. Applying Lemma \ref{lem:1} to $\id:X \to X$ and the quasi-\'{e}tale maps $\pi_i$ and $\pi_j$, we get some transition (after restricting its domain) $\varphi_{ji}:(M_i,x) \dashrightarrow (M_j, x')$ with $\pi_j \varphi_{ji} = \pi_i$. Thus $x$ and $x'$ are in the same $\Psi$-orbit, and $\pi(x) = \pi(x')$.

  To prove (B), take the $\pi_i:M_i \to X$ to be Q-charts, and note that $\bigsqcup_i \pi_i$ is a Q-chart, hence so is $\pi = f^{-1} \circ \bigsqcup_i \pi_i$.
\end{proof}

Now, we show $\mbf{F}$ is is full.
      \begin{proposition}
\label{prop:5}
Suppose $G$ and $H$ are lift-complete Lie groupoids.
\begin{description}
\item[(A)] A local subduction $f:M/G \to N/H$ lifts to a principal bibundle $P:G \to H$ whose right anchor is a submersion. If $f$ also surjective, then $P$ is a surjective submersion.
  \item[(B)] If the quotient maps $\pi_G:M \to M/G$ and $\pi_H:N \to N/H$ are Q-charts, any smooth map $f:M/G \to N/H$ lifts to a principal bibundle $P:G \to H$.
\end{description}

      \end{proposition}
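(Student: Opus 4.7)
The plan is to construct $P$ by a Čech-style gluing procedure from local lifts of $f\pi_G$ along $\pi_H$, using Lemma \ref{lem:3} to produce both the gluing cocycle and the left $G$-action. I would start by noting that in both parts $\pi_G$ is a local subduction (since $G$ is lift-complete), so $f\pi_G:M \to N/H$ is smooth; in part (A), $f\pi_G$ is moreover a local subduction as a composition of two. Applying Lemma \ref{lem:1}(a)-(b), I cover $M$ by open sets $\{U_i\}$ on each of which there is a lift $\varphi_i:U_i \to N$ with $\pi_H \varphi_i = f\pi_G|_{U_i}$. In part (A), Lemma \ref{lem:1}(c) furthermore forces each $\varphi_i$ to be a submersion.

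For each pair $i,j$ with $U_i \cap U_j \neq \emptyset$, the maps $\varphi_i$ and $\varphi_j$ both descend along $\pi_H$ to $f|_{\pi_G(U_i \cap U_j)}$, so Lemma \ref{lem:3} applies: part (A) of that lemma in our part (A) via submersivity of $\varphi_i$, and part (B) via the Q-chart hypothesis on $\pi_H$ in our part (B). It produces a unique smooth $\tau_{ji}:U_i \cap U_j \to \Eff(H) \cong H$ (using effectivity of $H$) realizing the transition $\varphi_i(x) \to \varphi_j(x)$. The uniqueness clause of Lemma \ref{lem:3} immediately yields the cocycle identity $\tau_{ki}(x) = \tau_{kj}(x)\tau_{ji}(x)$ on triple overlaps, since $\tau_{kj}\tau_{ji}$ and $\tau_{ki}$ both lift the same germ of transition from $\varphi_i$ to $\varphi_k$.

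I then glue $P_i := U_i \fiber{\varphi_i}{t} H$ by $(x,h)_i \sim (x,\tau_{ji}(x)h)_j$, obtaining a (generally non-Hausdorff) smooth manifold $P$. The projection $a := \pr_1$ and the map $b := s\pr_2$ descend to $P$, the right $H$-action $(x,h) \cdot h' := (x,hh')$ descends and commutes with gluing, and on each chart $(x,h,h') \mapsto ((x,h),(x,hh'))$ is a diffeomorphism onto $P_i \fiber{a}{a} P_i$, so $a$ is a principal $H$-bundle. For the left $G$-action, I lift an arrow $g:x \mapsto y$ with $x \in U_i$ and $y \in U_j$ by choosing a local bisection $\gamma$ through $g$ (Lemma \ref{lem:8}); then $\varphi_j \circ t\gamma$ and $\varphi_i$ both lift $f\pi_G$ near $x$, so Lemma \ref{lem:3} produces a germ $\sigma(g) \in H$ from $\varphi_i(x)$ to $\varphi_j(y)$. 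Define $g \cdot (x,h) := (y, \sigma(g)h)$.

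The main obstacle is the bookkeeping for the left $G$-action: verifying that it is unambiguously defined, independent of the chart $j$ and of the bisection $\gamma$ (the latter using \'{e}taleness of $G$, which makes germs of bisections at $x$ correspond to arrows out of $x$), smooth, associative with groupoid multiplication, compatible with the gluing cocycle, and commuting with the right $H$-action. Each of these checks reduces to the uniqueness and smoothness clauses of Lemma \ref{lem:3}, so once those are in hand the argument is routine. Finally, for part (A), $b|_{P_i} = s\pr_2$ is a submersion because $\varphi_i$ is and $s,t$ are \'{e}tale; and if $f$ is additionally surjective, then for any $y \in N$ we can write $\pi_H(y) = f\pi_G(x)$ for some $x \in U_i$, whence an arrow $h:y \to \varphi_i(x)$ in $H$ gives $(x,h) \in P_i$ with $b(x,h) = y$, so $b$ is surjective. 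Part (B) imposes no additional properties on $b$, so the construction above already yields the required principal bibundle.
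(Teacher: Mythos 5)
Your construction is correct, but it is organized quite differently from the paper's. The paper takes $P$ to be the space of \emph{all} germs of \emph{all} local lifts of $f$ at once, with charts $\germ{\varphi}{x} \mapsto x$; the two actions are then simply pre- and post-composition of germs, the $G$-action costs nothing, and Lemma \ref{lem:3} is invoked exactly once, to produce the division map $(\germ{\ti{\varphi}}{x},\germ{\varphi}{x}) \mapsto \germ{T(\ti\varphi,\varphi,x)}{\varphi(x)}$ that certifies principality of the $H$-action. (The paper also builds the bibundle in the direction $H \to G$ and passes to the opposite at the end.) You instead fix a cover, a choice of lift $\varphi_i$ on each piece, and glue the associated bundles $U_i \fiber{\varphi_i}{t} H$ along a cocycle $\tau_{ji}$; this makes principality of the right $H$-action automatic, but pushes Lemma \ref{lem:3} into two other places: the cocycle identity and the definition of the left $G$-action via bisections. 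The trade-off is that your route carries a genuine burden of well-definedness checks (independence of the chart index and of the bisection, compatibility with the gluing, functoriality and smoothness of the $G$-action) that the choice-free germ-space construction never encounters. I have checked that each of these does reduce, as you claim, to the uniqueness and smoothness clauses of Lemma \ref{lem:3} together with \'etaleness of $G$ (which pins down the germ of a bisection through an arrow), so deferring them is defensible, but in a written version the left-action verification is the one step that should be spelled out rather than waved at. Two small points: the covering of $M$ by domains of lifts needs only Lemma \ref{lem:1}(a) (part (b) is what you correctly use later for surjectivity of $b$), and you should record the one-line computation $\pi_H b(x,h)=\pi_H\varphi_i(x)=f\pi_G a(x,h)$ showing that the bibundle you build actually induces $f$.
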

      \begin{proof}
        For now, assume neither (A) nor (B), but only that we have a smooth map $f:M/G \to N/H$. Because $G$ and $H$ are effective, and the effect isomorphism descends to the identity on the orbit spaces, it suffices to give a bibundle $P:\Eff(G) \to \Eff(H)$. For simplicity, we will denote $\Eff(G)$ also by $G$ and $\Eff(H)$ by $H$. It will be convenient to give a bibundle $P:H \to G$, and then take the opposite bundle. As a set,
        \begin{equation*}
          P := \{\germ{\varphi}{x} \mid \varphi \text{ locally lifts } f\}.
        \end{equation*}
        For each local lift $\varphi$, we have the bijection
        \begin{equation*}
          \{\germ{\varphi}{x} \mid x \in \dom \varphi\} \to \dom \varphi, \quad \germ{\varphi}{x} \mapsto x,
        \end{equation*}
        and these are the charts for the manifold structure of $P$. The anchor maps $b:P \to N$ and $a:P \to M$ are given by
        \begin{equation*}
          \begin{tikzcd}
            & (P,\germ{\varphi}{x}) \ar[dl, "b"'] \ar[dr, "a"] & \\
            (N, \varphi(x)) & & (M, x).
          \end{tikzcd}
        \end{equation*}
        These are smooth by the choice of smooth structure on $P$. The actions are given by composition,
        \begin{equation*}
          \germ{h}{\varphi(x)} \cdot \germ{\varphi}{x} := \germ{h\varphi}{x} \text{ and } \germ{\varphi}{x} \cdot \germ{g}{g^{-1}(x)} := \germ{\varphi g}{g^{-1}(x)}.
        \end{equation*}
        It is straightforward to check that these are smooth. Evidently $a$ is $H$-invariant and $b$ is $G$-invariant. The map $a$ is surjective by Lemma \hyperref[lem:1]{\ref{lem:1} (a)}.

  If we assume that $f$ is a submersion, then the map $b$ is a submersion because, by Lemma \hyperref[lem:1]{\ref{lem:1} (c)}, the lifts of $f$ are submersions. If $f$ is surjective, Lemma \hyperref[lem:1]{\ref{lem:1} (b)} implies that the images of all possible liftings of $f$ cover $N$, so $b$ is surjective. To complete the proof for cases (A) and (B), all that remains is to show that $a:P \to M$ is a left principal $H$-bundle. We must show the action map $(\germ{h}{\varphi(x)}, \germ{\varphi}{x}) \mapsto (\germ{h\varphi}{x}, \germ{\varphi}{x})$ is a diffeomorphism. We claim its inverse is:
        \begin{equation*}
         \Phi: P \fiber{a}{a} P \to H \fiber{s}{b} P, \quad (\germ{\ti{\varphi}}{x}, \germ{\varphi}{x}) \mapsto (\germ{T(\ti{\varphi}, \varphi, x)}{\varphi(x)}, \germ{\varphi}{x}).
        \end{equation*}
        If $f$ is a submersion (i.e.\ (A) holds), then because $H$ is lift-complete and the lifts $\ti{\varphi}$ and $\varphi$ are submersions, we can apply Lemma \hyperref[lem:3]{\ref{lem:3} (A)} to obtain $T^x := T(\ti{\varphi}, \varphi, x) \in \Psi(H)$. If the quotient maps are Q-charts (i.e.\ (B) holds), then we can apply Lemma \hyperref[lem:3]{\ref{lem:3} (B)} to obtain $T^x$. In either case, we have $T^x$ subject to the conclusions of Lemma \ref{lem:3}.

        The germ $\germ{T^x}{\varphi(x)}$ is in $\Eff(H)$ by definition of the effect functor, and our identification of $\Eff(H)$ with $H$. A quick review of Lemma \ref{lem:3} shows that the germ of $T^x$ at $\varphi(x)$ depends only on the germs of $\ti{\varphi}$ and $\varphi$ at $x$, so $\Phi$ is well-defined. It is smooth because, if we fix $\ti{\varphi}$ and $\varphi$, the map $x \mapsto \germ{T^x}{\varphi(x)}$ is smooth by Lemma \ref{lem:3}, and $x \mapsto \germ{\varphi}{x}$ is smooth because it is a chart of $P$. The map $\Phi$ is an inverse of the action map by the definition of $T^x$, but we write out the verification for completeness.
        \begin{align*}
          &(\germ{\ti{\varphi}}{x}, \germ{\varphi}{x}) \mapsto (\germ{T^x}{\varphi(x)}, \germ{\varphi}{x}) \mapsto (\germ{T^x\varphi}{x}, \germ{\varphi}{x}) = (\germ{\ti{\varphi}}{x}, \germ{\varphi}{x}) \\
          &(\germ{h}{\varphi(x)}, \germ{\varphi}{x}) \mapsto (\germ{h\varphi}{x}, \germ{\varphi}{x}) \mapsto (\germ{T(h\varphi, \varphi, x)}{\varphi(x)}, \germ{\varphi}{x}) = (\germ{h}{\varphi(x)}, \germ{\varphi}{x}).
        \end{align*}
        The desired bibundle is the opposite of $P$.

      \end{proof}

      \begin{corollary}
        \label{cor:4}
        In the setting of Proposition \ref{prop:5}, if $f$ is a diffeomorphism, it lifts to a Morita equivalence.
      \end{corollary}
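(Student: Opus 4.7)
The plan is to apply Proposition \ref{prop:5}(A) to both $f$ and $f^{-1}$, and then to identify the two resulting bibundles as opposites of each other. Since $f$ is a diffeomorphism, both $f$ and $f^{-1}$ are surjective local subductions, so Proposition \ref{prop:5}(A) yields right principal bibundles $P : G \to H$ lifting $f$ and $Q : H \to G$ lifting $f^{-1}$, each with right anchor a surjective submersion. The bibundle $P$ is already right ($H$-)principal, so to show it is biprincipal, hence a Morita equivalence, it suffices to prove its right anchor $b : P \to N$ is a left principal $G$-bundle. I will do this by exhibiting an isomorphism $P \cong Q^{\text{op}}$ of bibundles $G \to H$ and invoking Remark \ref{rem:1}, which translates the right $G$-principality of $Q$ into left $G$-principality of $Q^{\text{op}}$.

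The isomorphism comes from inversion of local lifts. Unpacking Proposition \ref{prop:5}, $P$ is built from germs $\germ{\varphi}{x}$ of local lifts $\varphi$ of $f$, while $Q$ is built from germs $\germ{\psi}{y}$ of local lifts $\psi$ of $f^{-1}$. Because $f$ is a local diffeomorphism, Lemma \ref{lem:1}(d) ensures every such $\varphi$ is a local diffeomorphism of manifolds, so $\varphi^{-1}$ is a valid local lift of $f^{-1}$. Define
\begin{equation*}
  \Phi : P \to Q^{\text{op}}, \qquad \germ{\varphi}{x} \mapsto \germ{\varphi^{-1}}{\varphi(x)}.
\end{equation*}
This is a bijection with inverse $\germ{\psi}{y} \mapsto \germ{\psi^{-1}}{\psi(y)}$, smooth in both directions under the chart structures from Proposition \ref{prop:5} (since $\varphi \leftrightarrow \varphi^{-1}$ is the inversion operation for local diffeomorphisms), and it pairs anchors correctly: the left and right anchors of $P$ evaluate on $\germ{\varphi}{x}$ to $x$ and $\varphi(x)$, matching the left and right anchors of $Q^{\text{op}}$ applied to $\germ{\varphi^{-1}}{\varphi(x)}$.

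The main obstacle, which I expect to be the most fiddly step, is verifying that $\Phi$ is $G$- and $H$-equivariant. The opposite conventions for $Q^{\text{op}}$ (see Remark \ref{rem:1}) mean one must track carefully which transitions compose on which side. Concretely, the left $G$-action on $P$ (obtained by opposing the right $G$-action of Proposition \ref{prop:5}'s intermediate bibundle) takes the form $g \cdot \germ{\varphi}{x} = \germ{\varphi \gamma^{-1}}{\gamma(x)}$, where $\gamma$ represents an arrow $g : x \mapsto \gamma(x)$; the identity $(\varphi \gamma^{-1})^{-1} = \gamma \varphi^{-1}$ then yields $\Phi(g \cdot \germ{\varphi}{x}) = \germ{\gamma \varphi^{-1}}{\varphi(x)}$, which matches the corresponding action in $Q^{\text{op}}$. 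An analogous computation using $(\eta^{-1}\varphi)^{-1} = \varphi^{-1}\eta$ handles the right $H$-action. Once equivariance is established, $P \cong Q^{\text{op}}$ endows $P$ with the missing left $G$-principal structure, so $P$ is biprincipal and hence represents a Morita equivalence.
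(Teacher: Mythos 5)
Your proof is correct, but it is organized differently from the paper's. The paper stays inside the single bibundle $P$ constructed in Proposition \ref{prop:5} and directly exhibits the inverse of the second action map, namely $(\germ{\varphi}{x},\germ{\ti{\varphi}}{x'}) \mapsto (\germ{\varphi}{x}, \germ{T(\ti{\varphi}^{-1},\varphi^{-1},\varphi(x))^{-1}}{x})$, i.e.\ it reruns the division-map argument of Proposition \ref{prop:5} once more, applied to the inverted lifts. You instead invoke Proposition \ref{prop:5} a second time, on $f^{-1}$, to produce a principal bibundle $Q$ lifting $f^{-1}$, and obtain the missing principality of $P$ by identifying $P \cong Q^{\text{op}}$ via germwise inversion of lifts and appealing to Remark \ref{rem:1}. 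The underlying technical content is the same in both arguments --- Lemma \hyperref[lem:1]{\ref{lem:1} (d)} guarantees the lifts are invertible, and the division map for the second action ultimately comes from Lemma \ref{lem:3} applied to lifts of $f^{-1}$ --- but your packaging reuses Proposition \ref{prop:5} as a black box rather than repeating its verification, at the cost of the equivariance bookkeeping for $\Phi$, which you carry out correctly ($g \cdot \germ{\varphi}{x} = \germ{\varphi\gamma^{-1}}{\gamma(x)}$ under the opposite convention, and $(\varphi\gamma^{-1})^{-1} = \gamma\varphi^{-1}$). Both routes are sound; yours makes the symmetry between $f$ and $f^{-1}$ explicit, while the paper's is shorter because it can wave at "the same argument as in Proposition \ref{prop:5}."
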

      
      \begin{proof}
        Assume $f$ is a diffeomorphism. We take all notation from the proof of Proposition \ref{prop:5}. By Lemma \hyperref[lem:1]{\ref{lem:1} (d)}, the local lifts $\varphi$ of $f$ are local diffeomorphisms. To show $b:P \to N$ is a right principal $G$-bundle, we assert that the inverse of the action map $(\germ{\varphi}{x}, \germ{g}{g^{-1}(x)}) \mapsto (\germ{\varphi}{x}, \germ{\varphi g}{g^{-1}(x)})$ is
        \begin{equation*}
          P \fiber{b}{b} P \to P \fiber{a}{t} P, \quad (\germ{\varphi}{x}, \germ{\ti{\varphi}}{x'}) \mapsto (\germ{\varphi}{x}, \germ{T(\ti{\varphi}^{-1}, \varphi^{-1}, \varphi(x))^{-1}}{x}).
        \end{equation*}
        The notation $\varphi^{-1}$ denotes the local inverse of $\varphi$ that maps $x$ to $\varphi(x)$, and similarly for $\ti{\varphi}^{-1}$. Verifying that we have indeed given the inverse is similar to the argument in Proposition \ref{prop:5}, so we do not repeat it.
      \end{proof}
      \begin{remark}
        Furthermore, we could show that if $f$ is a local diffeomorphism, then it lifts to a locally invertible bibundle, in the sense of \cite{KM22}.
      \end{remark}

      Finally, we show $\mbf{F}$ is faithful up to isomorphism of 1-arrows. We first do this for bibundles induced by maps of Lie groupoids. Note that while we used $\varphi$ to denote a map of Lie groupoids in Section \ref{sec:groupoids}, we have already used that symbol for other purposes in this section, so here we denote Lie groupoid maps by $X$ and $Y$ instead. These do not refer to diffeological spaces.
\begin{lemma}
\label{lem:4}
Suppose $G$ and $H$ are lift-complete Lie groupoids, and that $X,Y:G \to H$ are smooth functors (maps of Lie groupoids) that induce the same map on the orbit spaces.
\begin{description}
\item[(A)] If this induced map is a surjective local subduction, then $X$ and $Y$ are naturally isomorphic.
\item[(B)] If $G$ and $H$ are objects of $\mbf{QLiftComp}$, then $X$ and $Y$ are naturally isomorphic (without any assumption on the induced map).
\end{description}
 
\end{lemma}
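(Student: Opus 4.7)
The plan is to build a smooth map $\eta : M \to H$ (via the identification $H \cong \Eff(H)$) which serves as the natural isomorphism $X \Rightarrow Y$, using Lemma~\ref{lem:3} twice: once pointwise to construct $\eta$, and once via its uniqueness clause to force naturality. Before doing so, I would verify that Lemma~\ref{lem:3} applies. The object maps satisfy $\pi_H X = \pi_H Y = f \pi_G$, where $f : M/G \to N/H$ is the induced map. In case~(A), $f$ is a surjective local subduction, so by Lemma~\ref{lem:1}~(c) both $X$ and $Y$ are submersions, and Lemma~\ref{lem:3}~(A) applies. In case~(B), $\pi_H$ is a Q-chart, so Lemma~\ref{lem:3}~(B) applies with no submersion hypothesis.

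For each $x \in M$, set $S^x := T(Y, X, x) \in \Psi(H)$ from Lemma~\ref{lem:3} and define $\eta(x) := \germ{S^x}{X(x)} \in \Eff(H) = H$. The defining relation $\germ{S^x X}{x} = \germ{Y}{x}$ gives $s_H \eta = X$ and $t_H \eta = Y$, and the final clause of Lemma~\ref{lem:3} shows $\eta$ is smooth.

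The main obstacle is naturality: for every $g : x \mapsto y$ in $G$, showing $\eta(y) \, X(g) = Y(g) \, \eta(x)$. I would fix a local bisection $\sigma$ of $G$ with $\sigma(x) = g$, and set $\tau := t_G \sigma \in \Psi(G)$, so $\tau(x) = y$. Since $\tau$ preserves $G$-orbits, the pairs $(X\tau, X)$ and $(Y\tau, Y)$ also descend to the same map to $N/H$, and Lemma~\ref{lem:3} yields germs $T(X\tau, X, x)$ and $T(Y\tau, Y, x)$. The key sub-step is to identify $\Eff(X(g)) = \germ{T(X\tau, X, x)}{X(x)}$: choose a local bisection $\tilde{\sigma}_H$ of $H$ with $\tilde{\sigma}_H(X(x)) = X(g)$; then $\tilde{\sigma}_H \circ X$ and $X \circ \sigma$ are smooth maps sharing source map $X$ and agreeing at $x$, so they coincide near $x$ by local injectivity of $s_H$. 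This yields $t_H \tilde{\sigma}_H \circ X = X\tau$ near $x$, and the uniqueness in Lemma~\ref{lem:3} delivers the identification; analogously $\Eff(Y(g)) = \germ{T(Y\tau, Y, x)}{Y(x)}$.

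With these identifications, both $\eta(y) X(g)$ and $Y(g) \eta(x)$ are germs at $X(x)$ of elements of $\Psi(H)$ mapping $X(x)$ to $Y(y)$. Composing each with $X$ near $x$ and using the relations $S^y X = Y$ near $y$, $S^x X = Y$ near $x$, $T(X\tau, X, x) \circ X = X\tau$ near $x$, and $T(Y\tau, Y, x) \circ Y = Y\tau$ near $x$, one sees both reduce to $Y\tau$. A final appeal to the uniqueness in Lemma~\ref{lem:3} applied to the pair $(Y\tau, X)$ at $x$ forces the two germs to agree at $X(x)$, and effectivity of $H$ then yields $\eta(y) X(g) = Y(g) \eta(x)$, completing the proof uniformly for both (A) and (B).
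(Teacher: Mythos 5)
Your proposal is correct and follows essentially the same route as the paper: both define the candidate natural transformation by $x \mapsto \germ{T(Y,X,x)}{X(x)}$ via Lemma \ref{lem:3}, identify $X(\germ{g}{x})$ with $\germ{T(X\tau,X,x)}{X(x)}$ (you via a local bisection of $H$ and local injectivity of $s_H$, the paper via continuity of $X$ into an open chart of the germ groupoid---equivalent arguments), and then verify naturality from the characterizing property of $T$, you by its uniqueness clause and the paper by computing both sides of the square explicitly as $\germ{T(Y\tau,X,x)}{X(x)}$.
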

\begin{proof}
  Because we assume the groupoids are effective, we may identify $G$ and $H$ with the germ groupoids $\Eff(G)$ and $\Eff(H)$, respectively. We propose to define
  \begin{equation*}
   \alpha:M \to H \quad x \mapsto \germ{T(Y,X,x)}{X(x)}.
 \end{equation*}
 If $X$ and $Y$ descend to a surjective local subduction, then $X,Y:M \to N$ are submersions by Lemma \hyperref[lem:1]{\ref{lem:1} (b)}, and we use Lemma \hyperref[lem:3]{\ref{lem:3} (A)} to get $T(Y,X,x)$. If $\pi_G$ and $\pi_H$ are Q-charts, we apply Lemma \hyperref[lem:3]{\ref{lem:3} (B)} to get $T(Y,X,x)$. In either case, we have $T(Y,X,x)$ subject to the conclusions of Lemma \ref{lem:3}. In particular, $\alpha$ is smooth. 

 We show $\alpha:X \implies Y$ is a natural transformation. First, for an arrow $\germ{g}{x}$ of $G$, whose effect transformation we denote by $g$, we have
    \begin{equation}\label{eq:3}
      X(\germ{g}{x}) = \germ{T(Xg,X,x)}{X(x)} \text{ and } Y(\germ{g}{x}) = \germ{T(Yg,Y,x)}{Y(x)}.
    \end{equation}
To see this, fix $x \in M$. Say $X(\germ{g}{x}) = \germ{h}{X(x)}$, for some transition $h \in \Psi(H)$. Then $X$, being continuous, must map points $\germ{g}{x'}$ near $\germ{g}{x}$ into the open subset $\{\germ{h}{y} \mid y \in \dom h\}$ of $H$. In other words, $X(\germ{g}{x'}) = \germ{h}{X(x')}$. Since $X$ is a functor, this means that $hX(x') = Xg(x')$ near $x$, and so $h = T(Xg, X, x)$ near $x$. The case for $Y$ is identical.
    
It follows that, for any arrow $\germ{g}{x}$ in $G$, the following commutes
    \begin{equation*}
      \begin{tikzcd}[sep=large]
        X(x) \ar[r, "\alpha(x)"] \ar[d, "{X(\germ{g}{x})}"] & Y(x) \ar[d, "Y({\germ{g}{x})}"] \\
        X(g(x)) \ar[r, "\alpha(g(x))"] & Y(g(x)).
      \end{tikzcd}
    \end{equation*}
    First, it is clear from the definition of $T(\cdot, \cdot, \cdot)$ that
    \begin{equation*}
      Y(\germ{g}{x}) \cdot \alpha(x) = \germ{T(Yg, Y, x)}{Y(x)} \cdot \germ{T(Y,X,x)}{X(x)} = \germ{T(Yg, X, x)}{X(x)}.
    \end{equation*}
On the other hand,
    \begin{equation*}
      \alpha(g(x)) \cdot X(\germ{g}{x}) = \germ{T(Y,X,g(x))}{X(g(x))} \cdot \germ{T(Xg, X, x)}{X(x)},
    \end{equation*}
    and $\germ{T(Y,X,g(x))}{X(g(x))} = \germ{T(Yg,X,x)}{X(g(x))}$ because $T(Y,X,g(x'))X(g(x')) = Y(g(x'))$ for $x'$ near $x$, thus
    \begin{equation*}
      \alpha(g(x)) \cdot X(\germ{g}{x}) = \germ{T(Yg,X,x)}{X(x)}.
    \end{equation*}    
\end{proof}

This immediately extends to bibundles.
\begin{proposition}
  \label{prop:6}
  Suppose $G$ and $H$ are lift-complete Lie groupoids, and $P,Q:G \to H$ are bibundles that induce the same map on the orbit spaces.
  \begin{description}
  \item[(A)] If this induced map is a surjective local subduction, then $P$ and $Q$ are isomorphic.
  \item[(B)] If $\pi_G$ and $\pi_H$ are Q-charts, then $P$ and $Q$ are isomorphic (without any assumption on the induced map).
  \end{description}

\end{proposition}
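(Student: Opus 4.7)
The plan is to build an isomorphism $\alpha: P \to Q$ of bibundles directly, imitating the construction of the action-inverse map $\Phi$ in the proof of Proposition \ref{prop:5}. Given $p \in P$ with $x := a_P(p)$, I would choose a local section $\tau$ of $a_P$ with $\tau(x) = p$ and a local section $\tau'$ of $a_Q$ over a neighbourhood $U$ of $x$; these exist because the left anchors of principal bibundles are surjective submersions. Both $b_P \tau$ and $b_Q \tau'$ are smooth maps $U \to N$ satisfying
\[
  \pi_H \circ b_P \tau = \ol{P} \circ \pi_G|_U = \ol{Q} \circ \pi_G|_U = \pi_H \circ b_Q \tau'.
\]

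In case (A), $\ol{P}$ is a local subduction, so by Lemma \ref{lem:1} (c) both $b_P \tau$ and $b_Q \tau'$ are submersions, and we may invoke Lemma \ref{lem:3} (A). In case (B), $\pi_H$ is a Q-chart and Lemma \ref{lem:3} (B) applies directly. Either way I obtain a transition $T^x := T(b_P \tau, b_Q \tau', x) \in \Psi(H)$ with $T^x \circ b_Q \tau' = b_P \tau$ near $x$, and germ $\germ{T^x}{b_Q \tau'(x)}$ uniquely determined. Identifying $H$ with $\Eff(H)$, set
\[
  \alpha(p) := \tau'(x) \cdot \germ{(T^x)^{-1}}{b_P(p)} \in Q,
\]
which satisfies $a_Q(\alpha(p)) = x = a_P(p)$ and $b_Q(\alpha(p)) = b_P(p)$.

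Well-definedness is the next step. The section $\tau$ through $p$ is locally unique: any other such section $\tilde{\tau}$ satisfies $\tilde{\tau} = \tau \cdot h$ for a smooth $h : U \dashrightarrow H$ with $h(x) = 1_{b_P(p)}$, and the \'{e}taleness of $H$ forces $h$ near $x$ to be the identity section $x' \mapsto 1_{b_P \tau(x')}$. If I replace $\tau'$ by another section $\ol{\tau}'$ of $a_Q$, a smooth $H$-valued correction again appears, and \'{e}taleness of $H$ writes it near $x$ as a single $\phi \in \Psi(H)$. The uniqueness clause of Lemma \ref{lem:3} then yields $\ol{T}^x = T^x \circ \phi$ as germs, and a direct calculation using the $H$-action on $Q$ shows the two expressions for $\alpha(p)$ agree.

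It remains to check smoothness and bi-equivariance. Smoothness follows from the explicit formula $\alpha(\tau(x')) = \tau'(x') \cdot \germ{(T^{x'})^{-1}}{b_P \tau(x')}$ together with the smoothness assertion at the end of Lemma \ref{lem:3}. Bi-equivariance is by the same germ-level bookkeeping: for the $G$-action, a local bisection $\sigma$ of $G$ with $\sigma(x) = g$ lets me transport both sections along $t\sigma$, which leaves the defining equation of $T^x$ invariant and yields $\alpha(g \cdot p) = g \cdot \alpha(p)$; for the $H$-action, replacing $\tau$ by a section through $p \cdot h$ modifies $T^x$ by precisely the \'{e}tale transition representing $h^{-1}$, and this cancels with the shift in the formula for $\alpha$. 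Since $P$ and $Q$ are principal, the smooth $(G,H)$-equivariant map $\alpha$ is automatically an isomorphism of bibundles. I expect the main obstacle to be this equivariance verification: it requires fluency in translating between germs of transitions in $\Psi(H)$, arrows in $\Eff(H) \cong H$, and the two bibundle actions, essentially redoing in the present setting the bookkeeping already carried out in the proof of Proposition \ref{prop:5}.
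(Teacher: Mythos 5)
Your construction is correct, but it takes a genuinely different route from the paper. The paper's proof first reduces to strict morphisms: by \cite[Lemma 3.37]{Ler10} there is a cover $\iota:\cl{U} \to M$ such that $P \circ \langle \iota \rangle$ and $Q \circ \langle \iota \rangle$ are induced by functors $X_P, X_Q:\cl{U} \to H$; Lemma \ref{lem:4} then assembles the transitions $T(\cdot,\cdot,\cdot)$ of Lemma \ref{lem:3} into a natural isomorphism $X_P \Rightarrow X_Q$, and one concludes by cancelling the Morita equivalence $\langle \iota \rangle$. You instead build the bibundle isomorphism $\alpha:P \to Q$ directly from local sections of the left anchors, so both arguments run on the same engine (Lemma \ref{lem:3} with its uniqueness and smoothness clauses), but yours trades Lerman's local-triviality result and the already-proven Lemma \ref{lem:4} for a hands-on equivariance verification at the level of the two actions. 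One fact you use tacitly and should record: since $H$ is \'{e}tale and $P$, $Q$ are right-principal, the left anchors $a_P$ and $a_Q$ are themselves \'{e}tale (the paper notes this in the proposition on Morita invariance of lift-completeness). This is what makes the germ at $x$ of a section $\tau$ of $a_P$ through $p$ unique, lets you rewrite the smooth $H$-valued corrections as germs of single transitions in $\Psi(H)$, and upgrades smoothness of $\alpha \circ \tau$ to smoothness of $\alpha$ on a neighbourhood of $p$. With that made explicit, the well-definedness, smoothness, and $(G,H)$-equivariance computations you sketch all go through, and principality of $P$ and $Q$ then forces $\alpha$ to be an isomorphism, as you say.
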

\begin{proof}
  By \cite[Lemma 3.37]{Ler10}, there is a cover $\iota: \cl{U} \to M$ of $M$ such that both $P \circ \langle \iota \rangle$ and $Q \circ \langle \iota \rangle$ are induced by maps $X_P, X_Q:\cl{U} \to H$ (writing $\cl{U}$ for the covering groupoid). Then $X_P$ and $X_Q$ induce the same map $|P| = |Q|$ on orbit spaces, so these functors are naturally isomorphic by Lemma \hyperref[lem:4]{\ref{lem:4} (A)} or \hyperref[lem:4]{\ref{lem:4} (B)}. But $\langle \iota \rangle$ is a Morita equivalence, thus $P$ and $Q$ must be isomorphic.
\end{proof}

Now we compile the proof of Theorem \ref{thm:1}, which we restate here for convenience.
\begin{theorem*}
  The quotient functor $\mbf{F}$ restricts in two ways:
  \begin{description}
  \item[(A)] To $\mbf{F}_{(A)}:\mbf{LiftComp}^{\twoheadrightarrow} \to \mbf{QUED}^{\twoheadrightarrow}$. This restriction is:
  \begin{itemize}
  \item essentially surjective on objects;
  \item full on 1-arrows;
  \item faithful up to isomorphism of 1-arrows.
  \end{itemize}
  Furthermore, the stack represented by a lift-complete Lie groupoid is determined by its diffeological orbit space, and $\mbf{F}_{(A)}$ gives an equivalence of categories if we descend to $\mbf{HS}$.
  \item[(B)] To $\mbf{F}_{(B)}:\mbf{QLiftComp} \to \mbf{QMan}$, satisfying the same conclusions as $\mbf{F}_{(A)}$.
  \end{description}
\end{theorem*}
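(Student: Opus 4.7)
The plan is to assemble the propositions of this section into Theorem \ref{thm:1}; almost no further work is needed beyond bookkeeping, because the substantive content has been front-loaded into the lemmas and propositions.

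First I would handle the three bulleted properties in both (A) and (B) by direct appeal to the preceding results. Essential surjectivity on objects is Proposition \ref{prop:4}: starting from a second-countable quasi-\'{e}tale space (resp.\ Q-manifold) $X$, it produces a lift-complete groupoid $\Gamma(X)$ (resp.\ an object of $\mbf{QLiftComp}$) with $\cl{M}/\Gamma(X)$ diffeomorphic to $X$. Fullness on 1-arrows is Proposition \ref{prop:5}: given a surjective local subduction $f:M/G \to N/H$ (resp.\ any smooth map, when $\pi_G$ and $\pi_H$ are Q-charts), the proposition produces a surjective submersive (resp.\ principal) bibundle $P$ with $\mbf{F}(P) = f$. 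Faithfulness up to isomorphism of 1-arrows is Proposition \ref{prop:6}, in parts (A) and (B) respectively: two bibundles inducing the same orbit-space map are isomorphic.

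Next I would address the claim that the stack represented by a lift-complete Lie groupoid is determined by its orbit space. One direction is immediate from functoriality of $\mbf{F}$: Morita equivalent lift-complete groupoids are connected by a biprincipal bibundle, which descends under $\mbf{F}$ to a diffeomorphism of orbit spaces. For the converse, given a diffeomorphism $f:M/G \to N/H$, Corollary \ref{cor:4} refines the bibundle produced by Proposition \ref{prop:5} to a biprincipal one, yielding a Morita equivalence and thus an isomorphism of stacks $[M/G] \cong [N/H]$.

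Finally, for the equivalence of categories after descending to $\mbf{HS}$: the functor $\mbf{F}$ sends 2-arrows to identities, so it factors through $\mbf{HS}$. Essential surjectivity is unaffected. Because arrows in $\mbf{HS}$ are isomorphism classes of principal bibundles, ``full on 1-arrows in $\mbf{Bi}$'' plus ``faithful up to isomorphism of 1-arrows in $\mbf{Bi}$'' translates to strict fullness and faithfulness of the descended functor on $\mbf{HS}$-morphisms. Combined with essential surjectivity, this yields an equivalence of categories, in both cases (A) and (B). The main obstacle to this program has already been absorbed into Section \ref{sec:lift-complete-lie-1}: the construction of the canonical transitions $T(\ti{\varphi},\varphi,x)$ in Lemma \ref{lem:3}, which underlies both the bibundle construction in Proposition \ref{prop:5} and the natural isomorphism in Lemma \ref{lem:4}, together with the explicit realization of an abstract quasi-\'{e}tale space as an orbit space in Proposition \ref{prop:4}; once these are in hand, the theorem itself is a clean assembly.
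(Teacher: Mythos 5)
Your proposal is correct and follows essentially the same assembly as the paper: essential surjectivity from Proposition \ref{prop:4}, fullness from Proposition \ref{prop:5}, faithfulness up to isomorphism from Proposition \ref{prop:6}, the stack statement from Corollary \ref{cor:4}, and the passage to $\mbf{HS}$ by observing that arrows there are isomorphism classes of bibundles. The only point you leave implicit is that the restriction is well defined in the first place (orbit spaces of lift-complete groupoids land in $\mbf{QUED}^{\twoheadrightarrow}$ and surjective submersive bibundles descend to surjective local subductions), which the paper dispatches by citing Proposition \ref{prop:2} and Remark \ref{rem:7}.
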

\begin{proof}
  The fact that $\mbf{F}_{(A)}$ restricts to a functor $\mbf{LiftComp}^{\twoheadrightarrow} \to \mbf{QUED}^{\twoheadrightarrow}$ is given by Proposition \ref{prop:2} and Remark \ref{rem:7}. That $\mbf{F}_{(A)}$ is essentially surjective, full on 1-arrows, and faithful up to isomorphism of 1-arrows, is Proposition \hyperref[prop:4]{\ref{prop:4} (A)}, \hyperref[prop:5]{\ref{prop:5} (A)}, and \hyperref[prop:6]{\ref{prop:6} (A)}, respectively. Corollary \ref{cor:4} implies that if two lift-complete Lie groupoids have diffeomorphic orbit spaces, then they are Morita equivalent, i.e.\ they represent isomorphic stacks. Finally, in $\mbf{HS}$ the objects are Lie groupoids and the arrows are isomorphism classes of principal bibundles. Thus, upon descending to $\mbf{HS}$, the quality ``faithful up to isomorphism of 1-arrows'' becomes simply ``faithful,'' and we see $\mbf{F}_{(A)}$ gives an equivalence of categories. The argument for $\mbf{F}_{(B)}$ is the same, except we appeal to Propositions \hyperref[prop:4]{\ref{prop:4} (B)}, \hyperref[prop:5]{\ref{prop:5} (B)}, and \hyperref[prop:6]{\ref{prop:6} (B)} instead.
\end{proof}

\subsection{Examples and extensions}
\label{sec:examples-extensions}

Theorem \hyperref[thm:1]{\ref{thm:1} (A)} fails if we try and allow for all arrows, as seen in the next two examples from \cite{IZKZ10}. We refer there for details. In both cases, the relevant orbit spaces are orbifolds.

\begin{example}[{\cite[Example 24]{IZKZ10}}]
  \label{ex:5}
  Let $\rho_n:\R \to [0,1]$ be a bump function with support inside $\left[\frac{1}{n+1}, \frac{1}{n}\right]$, and take $\sigma := (\sigma_1, \sigma_2, \ldots) \in \{-1,1\}^{\mathbb{N}}$, and set
  \begin{equation*}
    f_\sigma(x) :=
    \begin{cases}
      \sigma_ne^{-1/x}\rho_n(x) &\text{if } 1/(n+1) < x \leq 1/n \\
      0 &\text{if } x > 1 \text{ or } x \leq 0.
    \end{cases}
  \end{equation*}
  Then all the $f_\sigma$ descend to the same map $\R \to \R/\Z_2$. However, no two of the associated Lie groupoid maps $\R \to \R \rtimes \Z_2$ are naturally isomorphic. This shows that, in general, $\mbf{F}_{(A)}$ is not faithful up to isomorphism of $1$-arrows.
\end{example}

\begin{example}[{\cite[Example 25]{IZKZ10}}]
  \label{ex:6}
  Take $\rho_n$ as above, set $r := \sqrt{x^2+y^2}$, and let $f:\R^2 \to \R^2$ be defined by
  \begin{equation*}
    f(x,y) :=
    \begin{cases}
      e^{-r}\rho_n(r)(r,0) &\text{if } 1/(n+1) < r \leq 1/n \text{ and } n \text{ is even} \\
      e^{-r}\rho_n(r)(x,y) &\text{if } 1/(n+1) < r \leq 1/n \text{ and } n \text{ is odd} \\
      0 &\text{if } r > 1 \text{ or } r = 0.
    \end{cases}
  \end{equation*}
  For each $m \geq 2$, the function $f$ descends to a smooth function $\ol{f}: \R^2/Z_m \to \R^2/\Z_m$. However, $f$, does not upgrade to a functor $\R^2 \rtimes \Z_m \to \R^2 \rtimes \Z_m$, and in fact there is no such functor (equivalently, bibundle) that induces $\ol{f}$. This shows that, in general, $\mbf{F}_{(A)}$ is not full on 1-arrows.
\end{example}

It is also not possible to add arrows in $\mbf{LiftComp}^{\twoheadrightarrow}$ without the essential image of $\mbf{F}$ leaving $\mbf{QUED}^{\twoheadrightarrow}$.

\begin{proposition}
  \label{prop:8}
  Suppose that $G$ and $H$ are a Lie groupoids, $H$ is lift-complete, and $P:G \to H$ is a bibundle whose induced map on orbit spaces $\ol{P}:M/G \to N/H$ is a surjective local subduction. Then $P$ is a surjective submersion. 
\end{proposition}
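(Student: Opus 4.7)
Since $P$ is a principal bibundle, the left anchor $a: P \to M$ is automatically a surjective submersion, so the content of the claim is that the right anchor $b: P \to N$ is a surjective submersion. The key input to exploit is that, by lift-completeness of $H$ and Remark \ref{rem:7}, the orbit map $\pi_H: N \to N/H$ is quasi-\'etale; by contrast, no hypothesis is placed on $G$, so $\pi_G$ need not be quasi-\'etale and the entire argument must be routed through $\pi_H$.

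For surjectivity of $b$, fix $y \in N$. Since $\ol{P}$ is surjective, there is some $x \in M$ with $\ol{P}(\pi_G(x)) = \pi_H(y)$, and since $a$ is surjective we can pick $p' \in a^{-1}(x)$. Commutativity of diagram \eqref{eq:1} gives $\pi_H(b(p')) = \pi_H(y)$, so $b(p')$ and $y$ lie in the same $H$-orbit. Choose an arrow $h: y \mapsto b(p')$ in $H$ and set $p := p' \cdot h$; then $b(p) = s(h) = y$.

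For the submersion property, the observation is that
\begin{equation*}
  \pi_H \circ b = \ol{P} \circ \pi_G \circ a
\end{equation*}
is a local subduction, being the composition of the surjective submersion $a$ (between manifolds), the orbit map $\pi_G$ (a local subduction by Remark \ref{rem:7}), and the assumed local subduction $\ol{P}$. I then apply the proof technique of Lemma \ref{lem:1}(c) with the clever choice $\pi_X := \id_P$, $\pi_Y := \pi_H$, and $f := \pi_H \circ b$, reading $b$ itself as the ``lift'' of $f$ along $\pi_H$. Concretely, fix $p \in P$ and set $y := b(p)$: since $\pi_H b$ is a local subduction, locally lift $\pi_H$ along $\pi_H b$ to a pointed map $\tau: (N, y) \dashrightarrow (P, p)$; then
\begin{equation*}
  \pi_H \circ (b \circ \tau) = (\pi_H \circ b) \circ \tau = \pi_H,
\end{equation*}
so $b \circ \tau$ preserves $\pi_H$-fibers, and the quasi-\'etale condition (QE) on $\pi_H$ forces $b \circ \tau$ to be a local diffeomorphism. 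This in turn forces $b$ to be a submersion at $p$, and since $p$ was arbitrary, $b$ is a submersion on all of $P$.

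The only real conceptual obstacle is the temptation to invoke Lemma \ref{lem:1}(c) verbatim with $\pi_X = \pi_G$, which fails precisely because $G$ is not assumed lift-complete, so $\pi_G$ need not be quasi-\'etale. Replacing $\pi_G$ by the trivially quasi-\'etale $\id_P$ at the source dodges this gap and shows that the asymmetric lift-completeness hypothesis on $H$ alone suffices.
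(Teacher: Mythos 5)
Your proof is correct and follows essentially the same route as the paper: surjectivity of $b$ via surjectivity of $\ol{P}$, $\pi_G$, $a$ and translation by an arrow of $H$, then lifting $\pi_H$ along the local subduction $\pi_H b = \ol{P}\pi_G a$ to a pointed section $\tau$ and concluding that $b\tau$ is a local diffeomorphism. The only cosmetic difference is that you invoke condition (QE) for $\pi_H$ where the paper invokes (LC) for $H$ directly to see that $b\tau$ is locally a transition; these are interchangeable here, since Remark \ref{rem:7} derives the quasi-\'etale property of $\pi_H$ from lift-completeness of $H$.
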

\begin{proof}
  Denote the anchor maps of $P:G \to H$ by $a$ and $b$. Let $y \in N$. All of $a$, $\pi_H$, and $\ol{P}$ are surjective local subductions, thus so is their composition. Choose $p' \in P$ such that $\ol{P}\pi_Ga(p') = \pi_H(y)$. By the defining property of $\ol{P}$, we have $\pi_H(b(p')) = \pi_H(y)$. Take an arrow $h:y \mapsto b(p')$. Then $b(p' \cdot h) = s(h) = y$, so $b$ is surjective.

Take any $p$ with $b(p) = y$. Since $\ol{P}\pi_Ga$ is a local subduction, lift $\pi_H$ along $\ol{P}\pi_Ga$ to a pointed map $\tau:(N,y) \dashrightarrow (P,p)$. By definition of $\ol{P}$ and choice of $\tau$, we have, on its domain,
  \begin{equation*}
    (\pi_H b)\tau = \ol{P}\pi_G a \tau = \pi_H.
  \end{equation*}
  Since $H$ is lift-complete, $b\tau$ must, locally, be a transition, thus $b$ must be a submersion.
\end{proof}

This allows a pleasant statement about quotient maps. If $H$ is a lift-complete Lie groupoid, then by Theorem \hyperref[thm:1]{\ref{thm:1} (A)} there is a surjective submersive bibundle lifting the quotient $N \to N/H$, and it is isomorphic to every other such surjective submersion. By Proposition \ref{prop:8}, every lift of the quotient is a surjective submersion, thus there is a unique lift of the quotient up to isomorphism. In this case, we have a convenient representative $P:(N \rra N) \to H$, given by $P:= H$, anchors $t$ and $s$, trivial left $(N \rra N)$-action, and right $H$-action by multiplication of arrows. This bibundle should be viewed as a trivial principal bundle, in the same way $G \to \{*\}$ is a trivial principal bundle for a Lie group.

We will conclude by discussing an application to gluing bibundles. If $P:G \to H$ is a principal bibundle, and $U \subseteq M$ is an open subset, we can form the \define{restriction} $P|_U:G|_U \to H$, which is also a principal bibundle. The following example from Lerman \cite{Ler10} illustrates some awkward behaviour of restrictions in $\mbf{HS}$.
\begin{example}[{\cite[Lemma 3.41]{Ler10}}]\label{ex:7}
  A principal bibundle $P:(S^1\rra S^1) \to (\Z_2 \rra \{*\})$ is entirely determined by a right principal $\Z_2$-bundle over $S^1$. There are two distinct isomorphism classes of such bundles. However, all such bundles are equivalent when restricted to any contractible open subset of $S^1$. Thus we have two distinct morphisms in $\mbf{HS}$ which agree on an open cover of $S^1$.  
\end{example}
This situation is not covered by Theorem \ref{thm:1}, because the groupoid $\Z_2 \rra \{*\}$ is not effective. In fact, we can avoid the pitfalls of Example \ref{ex:7} entirely under suitable conditions.

\begin{proposition} 
  Take Lie groupoids $G$ and $H$, and cover $\cl{U}$ of $M$.
  \label{prop:9}
  \begin{description}
  \item[(A)] Suppose that $G$ and $H$ are lift-complete Lie groupoids.
    \begin{enumeratea}
  \item If $P,Q:G \to H$ are  surjective submersive bibundles, and $P|_U \cong Q|_U$ for each $U \in \cl{U}$, then $P \cong Q$.
  \item If, for each $U_\alpha \in \cl{U}$, we have a surjective submersive bibundle $P_\alpha:G|_{U_\alpha} \to H$, and isomorphisms $P_\alpha|_{U_{\alpha\beta}} \cong P_\beta|_{U_{\alpha\beta}}$ for all $\alpha,\beta$, then there is some surjective submersive bibundle $P:G \to H$ such that $P|_{U_\alpha} \cong P_\alpha$.
     \end{enumeratea}
  \item[(B)] Suppose that $G$ and $H$ are objects in $\mbf{QLiftComp}$. Then (a) and (b) hold without requiring any bibundle to be a surjective submersion.
  \end{description}

\end{proposition}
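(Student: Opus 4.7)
My plan is to deduce both (a) and (b) from Theorem \ref{thm:1}, which in the lift-complete setting identifies isomorphism classes of bibundles with smooth maps between orbit spaces; the sheaf property of $\mathbf{Diffeol}$ then reduces gluing of bibundles to gluing of smooth maps.

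Set $V_\alpha := \pi_G(U_\alpha)$. Since $\pi_G$ is a local subduction and hence an open map, $\{V_\alpha\}$ is an open cover of $M/G$, and each $V_\alpha$ with its subset diffeology agrees with the quotient diffeology of $U_\alpha/G|_{U_\alpha}$. For part (a), the hypothesis $P|_{U_\alpha} \cong Q|_{U_\alpha}$ forces the induced maps $\ol{P}, \ol{Q} \colon M/G \to N/H$ from Proposition \ref{prop:10} to agree on each $V_\alpha$, hence everywhere on $M/G$. Faithfulness up to isomorphism --- Proposition \ref{prop:6}(A) in case (A), which is applicable by Proposition \ref{prop:2}, or Proposition \ref{prop:6}(B) in case (B) --- then yields $P \cong Q$.

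For part (b), each $P_\alpha$ descends to a smooth map $f_\alpha \colon V_\alpha \to N/H$. I plan to verify that the $f_\alpha$ agree on the overlaps $V_\alpha \cap V_\beta$, so they glue to a smooth $f \colon M/G \to N/H$. Proposition \ref{prop:5} then lifts $f$ to a bibundle $P \colon G \to H$; in case (A) it is automatically a surjective submersion by Proposition \ref{prop:5}(A) or Proposition \ref{prop:8}, and $P|_{U_\alpha} \cong P_\alpha$ follows from part (a) applied to $P|_{U_\alpha}$ and $P_\alpha$, both of which induce $f_\alpha$.

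The principal obstacle is the overlap compatibility. The given isomorphism $P_\alpha|_{U_{\alpha\beta}} \cong P_\beta|_{U_{\alpha\beta}}$ yields agreement only on $\pi_G(U_{\alpha\beta}) \subseteq V_\alpha \cap V_\beta$, and the inclusion may be strict precisely when a $G$-orbit meets both $U_\alpha$ and $U_\beta$ without meeting $U_{\alpha\beta}$. For a point in the difference, take representatives $x \in U_\alpha$ and $y \in U_\beta$ linked by an arrow $g \colon x \to y$. Using Lemma \ref{lem:8}, I extend $g$ to a local bisection $\sigma$; by lift-completeness, $\psi := t\sigma$ is a pseudogroup element mapping a neighborhood $W \subseteq U_\alpha$ of $x$ onto $\psi(W) \subseteq U_\beta$. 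Conjugation by $\sigma$ induces a groupoid isomorphism $G|_W \cong G|_{\psi(W)}$, along which $P_\beta|_{\psi(W)}$ pulls back to a bibundle over $W$ inducing $f_\beta \circ \psi|_W$. The canonicity of natural transformations from Lemma \ref{lem:4}, combined with applying part (a) locally, should identify this pullback with $P_\alpha|_W$, giving $f_\alpha(x) = f_\beta(\psi(x)) = f_\beta(y)$ and closing the gap.
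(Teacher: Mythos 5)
Your part (a), and the overall strategy for part (b), match the paper's proof: pass to the descended maps on orbit spaces, glue there, and use Propositions \ref{prop:5}, \ref{prop:6} and \ref{prop:8} to lift back and to compare lifts. Part (a) is correct as you argue it: $\ol{P}$ and $\ol{Q}$ are globally defined maps that agree on the covering family $\{\pi_G(U)\}_{U \in \cl{U}}$, hence are equal, and Proposition \ref{prop:6} applies.

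In part (b) you have correctly isolated the delicate point --- the hypothesis only gives $f_\alpha = f_\beta$ on $\pi_G(U_{\alpha\beta})$, which can be a proper subset of $\pi_G(U_\alpha) \cap \pi_G(U_\beta)$ --- but your proposed repair does not go through. The assertion that transporting $P_\beta|_{\psi(W)}$ back to $W$ along the bisection $\sigma$ ``should'' be identified with $P_\alpha|_W$ is unsupported: the only datum relating $P_\alpha$ to $P_\beta$ lives over $U_{\alpha\beta}$, the neighbourhood $W$ of $x$ need not meet the saturation of $U_{\alpha\beta}$, and the two bibundles you want to compare are not known to induce the same map on orbit spaces (that is exactly what is being proved), so neither Lemma \ref{lem:4} nor a local application of part (a) has anything to bite on. Moreover the required agreement can genuinely fail under the hypothesis as literally stated. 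Take $G = H = \R \rtimes \Z$ (translations; lift-complete by Proposition \ref{prop:3}), cover $\R$ by $U_n = (0.55n,\, 0.55n + 0.6)$, and let $P_n$ be the surjective submersive bibundle induced by the functor $x \mapsto 5x/2$ on the trivial groupoid $G|_{U_n} = (U_n \rra U_n)$. Then $P_n|_{U_{n(n+1)}} = P_{n+1}|_{U_{n(n+1)}}$ on the nose, yet $\ol{P_0}([0.05]) = [1/8]$ while $\ol{P_1}([0.05]) = \ol{P_1}([1.05]) = [5/8]$, so the descended maps disagree on $\pi_G(U_0)\cap\pi_G(U_1)$ and no global $P$ with $P|_{U_n} \cong P_n$ can exist. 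The difficulty you found is therefore an obstruction to the statement itself rather than something your conjugation argument can remove; note that the paper's own proof of (b) also asserts agreement ``on the intersections $\ol{U_{\alpha\beta}}$'' without distinguishing $\pi_G(U_{\alpha\beta})$ from $\pi_G(U_\alpha)\cap\pi_G(U_\beta)$. The gluing statement does hold if compatibility is required over the saturated intersections $\pi_G^{-1}(\pi_G(U_\alpha)) \cap \pi_G^{-1}(\pi_G(U_\beta))$, or if the $U_\alpha$ are themselves saturated.
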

\begin{proof}
 We work in case (A).
  \begin{enumeratea}
    \item The induced maps $\ol{P|_U}$ and $\ol{Q|_U}$ from $\pi_G(U) \to N/H$ are identical, since isomorphic bibundles induce the same map on orbit spaces. The images $\pi_G(U)$ cover $M/G$, so the maps $\ol{P}$ and $\ol{Q}$ must be identical. But then, by Theorem \hyperref[thm:1]{\ref{thm:1} (A)}, $P$ and $Q$ must be isomorphic.
    \item Set $\ol{U_\alpha} := \pi_G(U_{\alpha})$. Then we have a collection of surjective submersions $\ol{P_\alpha}:\ol{U_\alpha} \to N/H$ which agree on the intersections $\ol{U_{\alpha\beta}}$. Therefore there is some smooth function $f:M/G \to N/H$ such that $f|_{\ol{U_\alpha}} = \ol{P_\alpha}$. This function is a surjective local subduction since its restriction to each open subset is a surjective local subduction. Therefore it lifts to a surjective submersion $P:G \to H$. The restriction $P|_{U_\alpha}$, which is also a surjective submersive bibundle, induces the same map induced by $P_\alpha$. By Theorem \hyperref[thm:1]{\ref{thm:1} (A)}, we conclude that $P|_{U_\alpha} \cong P_\alpha$.
    \end{enumeratea}
    The case (B) is the same, except we appeal to Theorem \hyperref[thm:1]{\ref{thm:1} (B)}.
\end{proof}

\section{Application to Riemannian foliations}
\label{sec:exampl-riem-foli}
In this last section, we show how Riemannian foliations give examples of lift-complete Lie groupoids and quasi-\'{e}tale diffeological spaces, to which we can apply Theorem \ref{thm:1}. First, we review Riemannian foliations, with details from \cite{Mol88, MM03}.

A codimension-$q$ \define{foliation} of a manifold $M$ is a partition $\cl{F}$ of $M$ into connected weakly-embedded (see Remark \ref{rem:6}) submanifolds $L$ of codimension $q$, called \define{leaves}, such that the associated distribution $x \mapsto T_xL \in T_x\cl{F}$ is smooth. In this case, the distribution $T\cl{F}$ is involutive.\footnote{Conversely, by the Jacobi-Clebsch-Deahna-Frobenius theorem, every involutive distribution induces a unique foliation.} The data of a foliation is equivalent to a \define{Haefliger cocycle}. This is a countable collection of submersions $\{s_i:U_i \to \R^q\}$, such that there exist diffeomorphisms $\gamma_{ji}:s_i(U_{ij}) \to s_j(U_{ij})$ (writing $U_{ij} := U_i \cap U_j$) for which $\gamma_{ji} s_i = s_j$ on $U_{ij}$.  The connected components of the fibers of the $s_i$, called \define{plaques}, glue together into leaves of a foliation. Conversely, every foliation admits a Haefliger cocycle.

Fix a Haefliger cocycle. For convenience, assume that each $s_i$ admits a global section $\sigma_i$. Let $\lambda:x \mapsto y$ be a path in a leaf $L$. Cover $\lambda([0,1])$ with a chain $U_0, \ldots, U_k$ such that $U_{i(i+1)} \neq \emptyset$, and $x \in U_0$ and $y \in U_k$. Choose a global section $\sigma_0$ of $s_0$ through $x$, and let $S := S_0$ be the immersed submanifold $\sigma_0(s_0(U_0))$. Similarly get $T:= S_k$. Then $x'$ near $x$ is in a plaque of $U_0$, which intersects a unique plaque of $U_1$, which intersects a unique plaque of $U_2$, etc. Ultimately, we find that $x'$ determines a unique plaque of $U_k$, which meets $T$ at a unique point $y'$. The transition $x' \mapsto y'$ is called the \define{holonomy transition} associated to $\lambda$, denoted $\hol^{T,S}(\lambda):S \dashrightarrow T$.

One can show that for fixed $T$ and $S$, the holonomy $\hol^{T, S}(\lambda)$ does not depend on the $U_i$, or even the homotopy class of $\sigma$. If we choose $S_i$ for every $U_i$, and let $\cl{S} := \bigsqcup_i S_i$, then $\cl{S}$ is a \define{complete transversal} to $\cl{F}$ (meaning the map $\cl{S} \hookrightarrow M$ is an immersion that is transverse to $\cl{F}$ and meets every leaf). The set of all holonomy transitions $\cl{S} \dashrightarrow \cl{S}$ is the \define{holonomy pseudogroup} $\Psi_{\cl{S}}(\cl{F})$ of $\cl{F}$.

If we chose other transversals $T'$ and $S'$, we have
\begin{equation*}
  \hol^{T', S'}(\lambda) = \hol^{T', S}(\ol{y}) \circ \hol^{T, S}(\lambda) \circ \hol^{S, S'}(\ol{x}),
\end{equation*}
where $\ol{y}$ and $\ol{x}$ are the constant paths. This means that the pseudogroup $\Psi_{\cl{S}'}(\cl{F})$ is equivalent\footnote{In the sense of pseudogroups. Equivalently, their germ groupoids are Morita equivalent.} to $\Psi_{\cl{S}}(\cl{F})$, so we call either of these ``the'' holonomy pseudogroup. More generally, we can represent the holonomy pseudogroup on any complete transversal $\cl{S}$. 

The holonomy pseudogroup captures the diffeology of the leaf space.
\begin{lemma}
  \label{lem:5}
  The leaf space $M/\cl{F}$ is diffeomorphic to $\cl{S}/\Psi_{\cl{S}}(\cl{F})$.
\end{lemma}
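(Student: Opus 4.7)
The plan is to exhibit a natural bijection $f: \cl{S}/\Psi_{\cl{S}}(\cl{F}) \to M/\cl{F}$ induced by the inclusion $\iota: \cl{S} \hookrightarrow M$, and then to show that both $f$ and $f^{-1}$ are smooth by producing local lifts of plots.

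First I would define $f$ via the composition $\cl{S} \xhookrightarrow{\iota} M \to M/\cl{F}$. Holonomy transitions $\cl{S} \dashrightarrow \cl{S}$ map each point to another point on the same leaf (by construction: plaques in a Haefliger chain belong to a single leaf), so this composition is $\Psi_{\cl{S}}(\cl{F})$-invariant and descends to a well-defined smooth map $f$. It is surjective because $\cl{S}$ is a complete transversal, hence meets every leaf. For injectivity, suppose $x, y \in \cl{S}$ are on the same leaf $L$; pick a path $\lambda \subset L$ from $x$ to $y$, cover it by a Haefliger chain, and read off the associated holonomy transition $\hol^{\cl{S},\cl{S}}(\lambda)$ sending $x$ to $y$. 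Thus $[x]_\Psi = [y]_\Psi$, so $f$ is a bijection.

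Smoothness of $f$ follows directly from the quotient-diffeology universal property, since $\iota$ composed with $\pi_{\cl{F}}: M \to M/\cl{F}$ is smooth and factors through $\pi_\Psi: \cl{S} \to \cl{S}/\Psi_{\cl{S}}(\cl{F})$. The harder step is smoothness of $f^{-1}$. Take any plot $p: \msf{U} \to M/\cl{F}$, a point $r \in \msf{U}$, and a local lift $q: \msf{V} \to M$ of $p$ through $\pi_{\cl{F}}$ defined near $r$. After shrinking $\msf{V}$, we may assume $q(\msf{V}) \subseteq U_i$ for some chart $s_i: U_i \to \R^q$ of the Haefliger cocycle, with chosen section $\sigma_i$. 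The map $\sigma_i \circ s_i \circ q : \msf{V} \to S_i \subseteq \cl{S}$ is smooth, and because $\sigma_i$ is a section of $s_i$, the values $q(r')$ and $\sigma_i s_i q(r')$ lie in a common fiber of $s_i$, hence in a common plaque, hence in the same leaf. Therefore $\pi_\Psi \circ \sigma_i \circ s_i \circ q$ is a local lift through $\pi_\Psi$ of $f^{-1} \circ p$, showing $f^{-1} \circ p$ is a plot of $\cl{S}/\Psi_{\cl{S}}(\cl{F})$.

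The main obstacle, and the only part that is not formal, is the last construction: exhibiting the lift $\sigma_i \circ s_i \circ q$ and verifying that its $\Psi$-class agrees pointwise with the $\cl{F}$-class of $q$. Everything else (well-definedness, bijectivity, smoothness of $f$) is bookkeeping using the definitions of the holonomy pseudogroup and of quotient diffeologies.
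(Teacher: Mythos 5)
Your proposal is correct and follows essentially the same route as the paper: both exhibit the bijection induced by $\iota:\cl{S}\hookrightarrow M$ and reduce smoothness in the nontrivial direction to the fact that $\pi_{\cl{F}}\circ\iota:\cl{S}\to M/\cl{F}$ is a subduction, which you realize concretely by the local lift $\sigma_i\circ s_i\circ q$ (the paper states this subduction property and defers the details to a reference, so you are in fact supplying the step it omits). One small point to tighten: ``lie in a common fiber of $s_i$, hence in a common plaque'' is not automatic, since plaques are the \emph{connected components} of the fibers and distinct plaques in one fiber can lie on different leaves; you should first shrink $U_i$ (or refine the cocycle) to simple foliation charts with connected fibers, after which the argument goes through.
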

\begin{proof}
  We will only sketch the proof, and have given a complete argument (but along slightly different lines) in \cite{Miy22}. The diffeomorphism is given by
  \begin{equation*}
    M/\cl{F} \to \cl{S}/\Psi_{\cl{S}}(\cl{F}), \quad L \mapsto [x] \text{ where } x \in L \cap \cl{S}.
  \end{equation*}
  This is well-defined because any two points in $L \cap \cl{S}$ are connected by some leafwise path, hence by some holonomy transition. Its inverse is the map $[x] \mapsto L_x$, which is well-defined because points joined by some holonomy transition must be in the same leaf. Smoothness is a consequence of the fact that bijection fits into the diagram
  \begin{equation*}
    \begin{tikzcd}
      & \cl{S} \ar[dl, "\pi_{\cl{F}} \circ \iota"'] \ar[dr, "\pi_{\Psi_{\cl{S}}(\cl{F})}"] & \\
      M/\cl{F} \ar[rr] & & \cl{S}/\Psi_{\cl{S}}(\cl{F}),
    \end{tikzcd}
  \end{equation*}
  and the downward arrows are surjective subductions.
\end{proof}

A \define{Riemannian foliation} is a foliation equipped with a non-negative and symmetric (0,2) tensor $g$ on $M$ such that $\iota_X g = 0$ and $\cl{L}_X g = 0$ for all vector fields $X$ tangent to $\cl{F}$. We call $g$ a \define{transversely Riemannian metric}. If $\cl{S}$ is a complete transversal to a Riemannian foliation $\cl{F}$, then $g$ descends to a Riemannian metric on $\cl{S}$, and the holonomy pseudogroup consists of local isometries. Furthermore,
\begin{lemma}
  \label{lem:7}
  The holonomy pseudogroup of a Riemannian foliation is complete.
\end{lemma}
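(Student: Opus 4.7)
The plan is to exploit that a complete transversal $\cl{S}$ can be equipped with a genuine Riemannian metric $g_{\cl{S}}$, induced by the transversely Riemannian metric $g$ on $M$, with respect to which every element of $\Psi_{\cl{S}}(\cl{F})$ is a local isometry. Local isometries of Riemannian manifolds are determined on connected domains by their 1-jet at a single point, and they can be extended along geodesics via the exponential map. This is the mechanism that produces the required extensions witnessing completeness.

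Concretely, given $x,x' \in \cl{S}$, the first step is to choose relatively compact normal neighbourhoods $V$ of $x$ and $V'$ of $x'$ satisfying two quantitative conditions: $V$ is contained in a geodesically convex ball around $x$ of some radius $\rho$, and $V'$ contains the geodesic ball of radius $2\rho$ around each of its points in $\cl{S}$ (we shrink $V$ if necessary so this is possible). Then, for any germ $\germ{\psi}{x''}$ of an element $\psi \in \Psi_{\cl{S}}(\cl{F})$ with $x'' \in V$ and $\psi(x'') \in V'$, the 1-jet $d\psi_{x''}: T_{x''}\cl{S} \to T_{\psi(x'')}\cl{S}$ is a linear isometry, and we define an extension
\begin{equation*}
  \widetilde{\psi}: V \to \cl{S}, \qquad \widetilde{\psi}(\exp_{x''}(v)) := \exp_{\psi(x'')}(d\psi_{x''}(v)).
\end{equation*}
By the choice of $V,V'$ this map is defined on all of $V$ and takes values in $V'$, is a local isometry, and agrees with $\psi$ on a neighbourhood of $x''$ by the uniqueness of local isometries with prescribed 1-jet.

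The main obstacle — and the step that actually uses the foliation structure, not just Riemannian geometry on $\cl{S}$ — is to show that $\widetilde{\psi}$ itself lies in $\Psi_{\cl{S}}(\cl{F})$, so that the closure property (c) of pseudogroups may be applied. I would handle this by a connectedness argument along geodesics. Let $A \subseteq V$ be the set of points $y$ such that $\germ{\widetilde{\psi}}{y}$ is represented by some element of $\Psi_{\cl{S}}(\cl{F})$. This set is open by definition, contains $x''$, and is nonempty. To see it is closed in $V$, suppose $y_n \to y$ with $y_n \in A$. Cover the short geodesic from $y_n$ (for large $n$) to $y$ by small foliated charts; on each chart, one can slide along leaves to realise $\widetilde{\psi}$ locally as a composition of holonomy transitions, using that $\widetilde{\psi}$ is a local isometry with the correct 1-jet and that holonomy transitions fill out all germs of local isometries $(\cl{S}, g_{\cl{S}}) \dashrightarrow (\cl{S}, g_{\cl{S}})$ that are tangent to some known holonomy transition at one point. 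Thus $A = V$, so $\widetilde{\psi}$ is locally a holonomy transition everywhere on $V$, and then axiom (c) of Definition \ref{def:2} places $\widetilde{\psi}$ in $\Psi_{\cl{S}}(\cl{F})$ itself.

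Alternatively, and probably more cleanly, one can appeal directly to the equicontinuity of $\Psi_{\cl{S}}(\cl{F})$ (its elements are local isometries of a fixed Riemannian metric) together with Haefliger's theorem \cite{HS90} that equicontinuous pseudogroups are complete; this packages exactly the extension argument above into a citable statement. Molino's original proof in \cite{Mol88} follows this equicontinuity route, using the tubular neighbourhood theorem for foliations to show the geodesic extension stays within the holonomy pseudogroup. Either way, the core insight is that Riemannian geometry on the transversal — specifically, the bounded geometry provided by normal coordinates — lets one enlarge the domain of any germ in $\Psi_{\cl{S}}(\cl{F})$ in a controlled way depending only on $x$ and $x'$, which is the content of completeness.
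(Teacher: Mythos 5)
The paper proves this lemma by citing Molino \cite[Appendix D, Proposition 2.6]{Mol88}, so there is no internal argument to compare against; your proposal attempts a self-contained proof, and it has a genuine gap at exactly the point you flag as ``the main obstacle.'' The geodesic extension $\widetilde{\psi}$ is fine as a local isometry of $(\cl{S}, g_{\cl{S}})$, but your argument that $\widetilde{\psi}$ lies in $\Psi_{\cl{S}}(\cl{F})$ is circular. In the closedness step for the set $A$, knowing that $\germ{\widetilde{\psi}}{y_n}$ is a holonomy germ for each $y_n$ does not yield a holonomy germ at the limit $y$: the holonomy transitions representing $\germ{\widetilde{\psi}}{y_n}$ may have domains that shrink as $y_n \to y$, and ruling out this shrinking is precisely the completeness-type statement being proved. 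The auxiliary fact you invoke --- that holonomy transitions ``fill out all germs of local isometries tangent to some known holonomy transition at one point'' --- is just the uniqueness of local isometries with prescribed $1$-jet: it shows $\widetilde{\psi}$ agrees with the known holonomy transition on that transition's own domain, and gives nothing at points outside it. Note that an abstract countably generated pseudogroup of local isometries need not be complete (take the pseudogroup generated by a single isometry with small domain), so no argument using only the Riemannian geometry of $\cl{S}$ and the pseudogroup axioms can close this gap; the foliation must enter in an essential way.

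The missing ingredient is the genuinely foliated statement that holonomy transport along a leafwise path carries transversal geodesics to transversal geodesics (a consequence of the fact that, for a Riemannian foliation, the distance to a fixed leaf is constant along each leaf), so that $\hol(\gamma)$ for a leafwise path $\gamma$ from $x''$ to $\psi(x'')$ is automatically defined on a transversal geodesic ball around $x''$ whose radius is controlled only by the geometry near $x''$ and $\psi(x'')$. That is the content of Molino's proof, and it is what makes the domains of the representing holonomy transitions uniformly large. Your ``cleaner alternative'' does not repair this: it is not a theorem that equicontinuous pseudogroups of local isometries are complete (the single-isometry example above is equicontinuous), and completeness appears as a hypothesis, not a conclusion, in \cite{HS90}.
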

\begin{proof}
  See \cite[Appendix D, Proposition 2.6]{Mol88}.
\end{proof}
Lemma \ref{lem:7} holds even if the induced metric on $\cl{S}$ is not complete. We may now show that leaf spaces of Riemannian foliations are quasi-\'{e}tale.

\begin{proposition}
\label{prop:7}
  The leaf space of a Riemannian foliation is an object of $\mbf{QUED}$.
\end{proposition}
\begin{proof}
Let $(M, \cl{F}, g)$ be a Riemannian foliation. Take a complete transversal $\cl{S}$, and form $\Psi_{\cl{S}}(\cl{F})$. Recall that $g$ descends to a Riemannian metric on $\cl{S}$ for which $\Psi_{\cl{S}}(\cl{F})$ consists of local isometries. Furthermore, $\Psi_{\cl{S}}(\cl{F})$ is countably generated, because we can choose a countable Haefliger cocycle, and it is complete by Lemma \ref{lem:7}. Therefore, by Corollary \ref{cor:1}, the holonomy pseudogroup $\Psi_{\cl{S}}(\cl{F})$ is lift-complete, and in particular $\cl{S}/\Psi_{\cl{S}}(\cl{F})$ is quasi-\'{e}tale. By Lemma \ref{lem:5}, this means $M/\cl{F}$ is quasi-\'{e}tale. The leaf space is second-countable because $M$ is second-countable.
\end{proof}

To each foliation $\cl{F}$ we can also associate a Lie groupoid $\Hol(\cl{F}) \rra M$, called the \define{holonomy groupoid}. Its arrow space consists of leafwise paths $\lambda$ modulo the following equivalence: $\lambda \sim \ti{\lambda}$ if $\hol^{S,S}(\lambda^{-1} * \ti{\lambda})$ is the identity. We denote the holonomy class of $\lambda$ by $[\lambda]$. A class $[\lambda]$ is an arrow $[\lambda] x \mapsto y$, and multiplication is given by concatenation. For the smooth structure on $\Hol(\cl{F})$, see \cite[Proposition 5.6]{MM03}. The orbits of $\Hol(\cl{F})$ are the leaves of $\cl{F}$, with their usual diffeology (cf.\ the discussion in Remark \ref{rem:6}).

If $\cl{S} \hookrightarrow M$ is a complete transversal, we can form the pullback groupoid $\Hol_{\cl{S}}(\cl{F}) \rra \cl{S}$, whose arrows are holonomy classes of paths with endpoints in $\cl{S}$. This is the \define{\'{e}tale holonomy groupoid}. It is a Lie groupoid because $\cl{S} \hookrightarrow M$ is is transverse to $\cl{F}$, and it is Morita equivalent to $\Hol(\cl{F})$ because the induced functor is a Morita map. One can show that $\Hol_{\cl{S}}(\cl{F})$ is \'{e}tale and effective, and $\Psi(\Hol_{\cl{S}}(\cl{F})) = \Psi(\cl{S})$. We can then prove Corollary \ref{cor:2} from the Introduction, which we restate here.

\begin{corollary*}
    Two Riemannian foliations have diffeomorphic leaf spaces if and only if their holonomy groupoids are Morita equivalent.
\end{corollary*}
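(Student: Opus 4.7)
The plan is to use Theorem \ref{thm:1}(A), specializing to holonomy groupoids. The previous lemmas and propositions in this section have essentially set up all the needed machinery: Proposition \ref{prop:7} shows that the étale holonomy groupoid of a Riemannian foliation is lift-complete, Lemma \ref{lem:5} identifies the leaf space with the orbit space of the étale holonomy groupoid, and the paragraph immediately preceding the corollary states that $\Hol_{\cl{S}}(\cl{F})$ is Morita equivalent to $\Hol(\cl{F})$. So the proof amounts to plugging these facts into Theorem \ref{thm:1}.

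For the ``only if'' direction, suppose $(M_1,\cl{F}_1)$ and $(M_2,\cl{F}_2)$ are Riemannian foliations with $M_1/\cl{F}_1 \cong M_2/\cl{F}_2$ as diffeological spaces. Choose complete transversals $\cl{S}_i$ and form the étale holonomy groupoids $\Hol_{\cl{S}_i}(\cl{F}_i)$. By Proposition \ref{prop:7} (applied to the holonomy pseudogroups, which are the associated pseudogroups of the étale holonomy groupoids), each $\Hol_{\cl{S}_i}(\cl{F}_i)$ is a lift-complete Lie groupoid. By Lemma \ref{lem:5} its orbit space is diffeomorphic to $M_i/\cl{F}_i$, so the two étale holonomy groupoids have diffeomorphic orbit spaces. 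By Theorem \hyperref[thm:1]{\ref{thm:1}(A)}, a lift-complete Lie groupoid is determined up to Morita equivalence by its diffeological orbit space, hence $\Hol_{\cl{S}_1}(\cl{F}_1)$ and $\Hol_{\cl{S}_2}(\cl{F}_2)$ are Morita equivalent. Since each $\Hol_{\cl{S}_i}(\cl{F}_i)$ is Morita equivalent to $\Hol(\cl{F}_i)$ via the inclusion of the transversal, the full holonomy groupoids $\Hol(\cl{F}_1)$ and $\Hol(\cl{F}_2)$ are Morita equivalent.

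For the ``if'' direction, suppose $\Hol(\cl{F}_1)$ and $\Hol(\cl{F}_2)$ are Morita equivalent. Morita equivalent Lie groupoids are isomorphic in $\mbf{Bi}$, so the quotient functor $\mbf{F}:\mbf{Bi} \to \mbf{Diffeol}$ sends them to diffeomorphic orbit spaces. Since the orbits of $\Hol(\cl{F}_i)$ are the leaves of $\cl{F}_i$, the orbit space of $\Hol(\cl{F}_i)$ is (diffeomorphic to) $M_i/\cl{F}_i$, and we conclude $M_1/\cl{F}_1 \cong M_2/\cl{F}_2$.

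There is no real obstacle; every ingredient has been established. The only thing to be careful about is to work with the étale holonomy groupoid when invoking Theorem \ref{thm:1} (since lift-completeness is formulated for étale groupoids) and then transfer the conclusion back to the full holonomy groupoid via the Morita equivalence induced by the inclusion of a complete transversal.
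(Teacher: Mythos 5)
Your proof is correct and takes essentially the same route as the paper: pass to the \'{e}tale holonomy groupoid via a complete transversal, invoke Proposition \ref{prop:7} together with $\Psi(\Hol_{\cl{S}}(\cl{F})) = \Psi(\cl{S})$ for lift-completeness, and apply Theorem \hyperref[thm:1]{\ref{thm:1} (A)}. The paper's proof is just a terser version of the same argument.
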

\begin{proof}
Given a Riemannian foliation $(M, \cl{F}, g)$, its holonomy groupoid $\Hol(\cl{F})$ is Morita equivalent to $\Hol_{\cl{S}}(\cl{F})$ for any complete transversal $\cl{S}$. This \'{e}tale holonomy groupoid is lift complete by Proposition \ref{prop:7} and the fact $\Psi(\Hol_{\cl{S}}(\cl{F})) = \Psi(\cl{S})$. Therefore we can apply Theorem \ref{thm:1}, from which this corollary is immediate.
\end{proof}

This is not true for arbitrary foliations.
\begin{example}
  \label{ex:9}
This is a summary of results from \cite[Section 7]{KM22}. Take a smooth bounded function $h:\R \to \R$ that is flat at 0 and positive everywhere else. Let $\varphi$ be the time 1 flow of the vector field $h\partial_x$, and let
  \begin{equation*}
    \ti{\varphi}(x) := 
    \begin{cases}
      \varphi(x) &\text{if } x \geq 0 \\
      \varphi^{-1}(x) &\text{if } x < 0.
    \end{cases}
  \end{equation*}
By iterating $\varphi$ and $\tilde{\varphi}$, we get two $\Z$ actions on $\R$ with the same orbit spaces $X := \R/\varphi = \R/\ti{\varphi}$.\footnote{The space $X$ is an example of a quotient of $\R$ by a countable group that is not a quasifold.}  However, the corresponding action groupoids are not Morita equivalent. By suspending the actions, we obtain two foliations whose orbit spaces are diffeomorphic to $X$, and whose \'{e}tale holonomy groupoids are isomorphic to the corresponding action groupoids. Therefore these foliations do not satisfy Corollary \ref{cor:2}.
\end{example}

We finish by discussing transverse equivalence of Riemannian foliations.
\begin{definition}
  \label{def:3}
  Two foliations $(M,\cl{F}_M)$ and $(N, \cl{F}_N)$ are \define{Molino transversely equivalent} if there is a manifold $P$ and two surjective submersions with connected fibers $a:P \to M$ and $b:P \to N$ such that $a^{-1}(\cl{F}) = b^{-1}(\cl{F})$. Here the leaves of $a^{-1}(\cl{F})$ are $a^{-1}(L)$ for $L \in \cl{F}$.
\end{definition}
This is the definition given in \cite[Section 2.7]{Mol88}. There is an intimate relation between Molino transverse equivalence of foliations, and Morita equivalence of their holonomy groupoids.
\begin{lemma}[{\cite[Corollary 3.17]{Miy23}}]
  \label{lem:6}
  If two regular foliations have Hausdorff holonomy groupoids, they are Molino transverse equivalent if and only if their holonomy groupoids are Morita equivalent.
\end{lemma}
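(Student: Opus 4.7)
The overall strategy is to use the pullback foliation as a bridge, exploiting the classical fact that for a surjective submersion with connected fibers $a\colon P \to M$, the holonomy groupoids $\Hol(a^{-1}\cl{F}_M)$ and $\Hol(\cl{F}_M)$ are Morita equivalent.

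For the forward implication, assume $a\colon P \to M$ and $b\colon P \to N$ exhibit a Molino transverse equivalence. I would first establish (or invoke) the auxiliary fact that for any surjective submersion with connected fibers $a\colon P \to M$, the natural bibundle with total space $\Hol(\cl{F}_M) \fiber{s}{a} P$ is biprincipal, giving a Morita equivalence $\Hol(a^{-1}\cl{F}_M) \sim \Hol(\cl{F}_M)$. Right principality is automatic from the pullback construction; left principality uses the connectedness of $a$-fibers, which prevents the pullback holonomy covering from acquiring spurious components. Applying this to both $a$ and $b$ and using $a^{-1}\cl{F}_M = b^{-1}\cl{F}_N$, I compose the Morita equivalences
\[
  \Hol(\cl{F}_M) \sim \Hol(a^{-1}\cl{F}_M) = \Hol(b^{-1}\cl{F}_N) \sim \Hol(\cl{F}_N)
\]
to obtain the desired biprincipal bibundle between the outermost groupoids.

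For the reverse implication, let $Q\colon \Hol(\cl{F}_M) \to \Hol(\cl{F}_N)$ be a biprincipal bibundle, with anchors $a\colon Q \to M$ and $b\colon Q \to N$. These are surjective submersions by principality, and the Hausdorff assumption on $\Hol(\cl{F}_M)$ and $\Hol(\cl{F}_N)$ ensures $Q$ is Hausdorff. The right action of $\Hol(\cl{F}_N)$ identifies each fiber $a^{-1}(x)$ with a $t$-fiber of $\Hol(\cl{F}_N)$, i.e.\ a holonomy covering of a leaf of $\cl{F}_N$; since leaves are connected, so are these coverings, so $a$ has connected fibers, and symmetrically for $b$. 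To verify $a^{-1}\cl{F}_M = b^{-1}\cl{F}_N$, I would work pointwise: the tangent distribution $T(a^{-1}\cl{F}_M)_q = (da_q)^{-1}(T_{a(q)}\cl{F}_M)$ decomposes as $\ker da_q$ plus horizontal lifts of leaf directions; fiber-transitivity of the right $\Hol(\cl{F}_N)$-action identifies $\ker da_q$ with the infinitesimal right action at $q$, which $db$ sends into $T\cl{F}_N$, while a dual computation for $b$ produces the matching distribution. The main obstacle is precisely this last step: translating algebraic biprincipality into the geometric equality of foliations on $Q$, which requires carefully tracking how the left and right groupoid actions relate to the leaf distributions on $M$ and $N$, and keeping the bookkeeping consistent between infinitesimal actions and pullback foliations.
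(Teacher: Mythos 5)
The paper offers no proof of this lemma: it is imported verbatim from \cite[Corollary 3.17]{Miy23}, so there is no internal argument to compare against. Your outline is the standard proof of this fact and is essentially sound. For the forward direction, the bibundle associated to the functor $\Hol(a^{-1}\cl{F}_M) \to \Hol(\cl{F}_M)$ induced by $a$ is biprincipal exactly because that functor is Morita: essential surjectivity comes from surjectivity of $a$, and full faithfulness comes from the fact that connectedness of the $a$-fibers makes $\pi_1(a^{-1}(L)) \to \pi_1(L)$ surjective, so the holonomy groups of $a^{-1}(L)$ and $L$ correspond under the identification of transversals. Composing the two equivalences through $a^{-1}\cl{F}_M = b^{-1}\cl{F}_N$ is then correct, and the Hausdorff hypothesis is not needed here; it enters only in the converse, where (as you note) it guarantees that the total space $Q$ of the biprincipal bibundle is a genuine Hausdorff manifold, as required by Definition \ref{def:3}.

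The one step you flag as the ``main obstacle'' --- proving $a^{-1}\cl{F}_M = b^{-1}\cl{F}_N$ on $Q$ --- does not require the infinitesimal-action bookkeeping you describe, and can be closed as follows. By right principality, for $q \in Q$ with $a(q) = x$ and $b(q) = y$, the action map $h \mapsto q \cdot h$ is a diffeomorphism from $t^{-1}(y) \subseteq \Hol(\cl{F}_N)$ onto $a^{-1}(x)$; hence $b(a^{-1}(x)) = s(t^{-1}(y)) = L_y$, so $a^{-1}(x) \subseteq b^{-1}(L_y)$. Taking the union over $x' \in L_x$, all of which pair with the same leaf $L_y$, gives $a^{-1}(L_x) \subseteq b^{-1}(L_y)$, and left principality gives the reverse inclusion. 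Thus the two partitions of $Q$ coincide as partitions into subsets. Each part is a connected, weakly-embedded submanifold of the common dimension $\dim \cl{F}_M + \dim \cl{F}_N$ (since $\dim \ker da = \dim t^{-1}(y) = \dim \cl{F}_N$, and symmetrically for $b$), and a weakly-embedded submanifold structure is determined by its underlying subset; therefore the tangent spaces of the leaves, i.e.\ the distributions $(da)^{-1}(T\cl{F}_M)$ and $(db)^{-1}(T\cl{F}_N)$, agree pointwise, and the foliations are equal. This incidentally also re-proves connectedness of the $a$- and $b$-fibers, since $t^{-1}(y)$ is the holonomy cover of the connected leaf $L_y$. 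With that step filled in, your argument is complete.
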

The holonomy groupoid of Riemannian foliation is always Hausdorff (\cite[Example 5.8 (7)]{MM03}), so the following corollary is a direct consequence of Corollary \ref{cor:2} and Lemma \ref{lem:6}.
\begin{corollary}
Two Riemannian foliations have diffeomorphic leaf spaces if and only if they are Molino transverse equivalent. 
\end{corollary}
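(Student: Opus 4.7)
The proof is essentially a two-line chaining of the results already assembled. The plan is to combine Corollary \ref{cor:2} with Lemma \ref{lem:6}, using as a bridge the fact that holonomy groupoids of Riemannian foliations are Hausdorff, which is cited from \cite[Example 5.8 (7)]{MM03}.

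More concretely, I would proceed as follows. Given two Riemannian foliations $(M,\cl{F}_M,g_M)$ and $(N,\cl{F}_N,g_N)$, first invoke Corollary \ref{cor:2} to convert the hypothesis ``diffeomorphic leaf spaces'' into the equivalent statement ``the holonomy groupoids $\Hol(\cl{F}_M)$ and $\Hol(\cl{F}_N)$ are Morita equivalent.'' Then, since both holonomy groupoids are Hausdorff (being holonomy groupoids of Riemannian foliations), Lemma \ref{lem:6} applies and converts Morita equivalence of holonomy groupoids into Molino transverse equivalence of the underlying regular foliations. Running this chain in the reverse direction gives the other implication, so we obtain a biconditional.

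There is really no obstacle here: the work has already been done in Corollary \ref{cor:2} (which itself rests on Theorem \ref{thm:1} (A) and Proposition \ref{prop:7}) and in the cited Lemma \ref{lem:6} from \cite{Miy23}. The only thing to verify explicitly is that the Hausdorff hypothesis needed for Lemma \ref{lem:6} is satisfied, which is standard for Riemannian foliations. Thus the proof amounts to a single sentence citing the two results, with a parenthetical remark about Hausdorffness.
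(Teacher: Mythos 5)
Your proposal matches the paper's argument exactly: the corollary is obtained by chaining Corollary \ref{cor:2} with Lemma \ref{lem:6}, with the Hausdorffness of holonomy groupoids of Riemannian foliations (cited from \cite[Example 5.8 (7)]{MM03}) supplying the hypothesis needed for Lemma \ref{lem:6}. Nothing is missing.
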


\begin{remark}
  \label{rem:4}
  Garmendia and Zambon \cite{GZ19} also propose a transverse equivalence for singular foliations, which they call \emph{Hausdorff Morita equivalence}. If two regular foliations (more generally, projective singular foliations) have Hausdorff holonomy groupoids, then they are Hausdorff Morita equivalent if and only if their holonomy groupoids are Morita equivalent (\cite[Proposition 3.39]{GZ19}).
\end{remark}

\printbibliography

\end{document}